\newtheorem{theorem}{Theorem}[section]
\newtheorem{theorem-definition}[theorem]{Theorem-Definition}
\newtheorem{theorem-construction}[theorem]{Theorem-Construction}
\newtheorem{lemma-definition}[theorem]{lemma--Definition}
\newtheorem{lemma-construction}[theorem]{lemma--Construction}
\newtheorem{lemma}[theorem]{Lemma}
\newtheorem{proposition}[theorem]{Proposition}
\newtheorem{corollary}[theorem]{Corollary}
\newtheorem{conjecture}[theorem]{Conjecture}
\newtheorem{definition}[theorem]{Definition}
\newcommand{\old}[1]{}
\newcommand{\Z}{{\mathbb Z}}
\newcommand{\G}{{\mathcal G}}
\newcommand{\R}{{\mathbb R}}
\newcommand{\C}{{\mathbb C}}
\newcommand{\T}{{\mathbb T}}
\newcommand{\ra}{\rightarrow}
\newcommand{\be}{\begin{equation}}
\newcommand{\ee}{\end{equation}}
\newcommand{\bt}{\begin{theorem}}
\newcommand{\et}{\end{theorem}}
\newcommand{\bd}{\begin{definition}}
\newcommand{\ed}{\end{definition}}
\newcommand{\bp}{\begin{proposition}}
\newcommand{\ep}{\end{proposition}}
\newcommand{\bl}{\begin{lemma}}
\newcommand{\el}{\end{lemma}}
\newcommand{\bc}{\begin{corollary}}
\newcommand{\ec}{\end{corollary}}
\newcommand{\bcon}{\begin{conjecture}}
\newcommand{\econ}{\end{conjecture}}
\begin{document}


\title{Grove arctic curves from periodic cluster modular transformations}
\author{Terrence George} 
\address{Brown University\\Providence, RI-02906,USA}
\email{gterrence@math.brown.edu}

\maketitle
\begin{abstract}
Groves are spanning forests of a finite region of the triangular lattice that are in bijection with Laurent monomials that arise in solutions of the cube recurrence. We introduce a large class of probability measures on groves for which we can compute exact generating functions for edge probabilities. Using the machinery of asymptotics of multivariate generating functions, this lets us explicitly compute arctic curves, generalizing the arctic circle theorem of Petersen and Speyer. Our class of probability measures is sufficiently general that the limit shapes exhibit all solid and gaseous phases expected from the classification of EGMs in the resistor network model.
\end{abstract}
\bibliographystyle{amsxport}
\tableofcontents

\section{Introduction}
A function $f:\mathbb{Z} ^3 \rightarrow \mathbb{C}$ satisfies the \textit{cube recurrence}(also known as the \textit{Miwa equation} or the \textit{discrete BKP equation}(\cite{Miwa82})) if for all $(i,j,k) \in \mathbb{Z}^3$
$$
f_{i,j,k}f_{i-1,j-1,k-1}=f_{i-1,j,k}f_{i,j-1,k-1}+f_{i,j-1,k}f_{i-1,j,k-1}+f_{i,j,k-1}f_{i-1,j-1,k}.
$$
We denote by $\mathcal{F}$ the set of functions satisfying the cube recurrence.
Define the \textit{lower cone} of $(i,j,k) \in \mathbb{Z}^3$ to be $C(i,j,k):=\{(i',j',k')\in \mathbb{Z}^3_{\leq 0}:i' \leq i,j' \leq j, k' \leq k\}$. Let $\mathcal{L}$ be a subset of $\mathbb{Z}^3_{\leq 0}$ such that $\mathbb{Z}^3_{\leq 0} \setminus \mathcal{L}$ is finite and if $(i,j,k) \in \mathcal{L}$ then we have $C(i,j,k) \subseteq \mathcal{L}$. Let $\mathcal{U}:=\mathbb{Z}^3_{\leq 0} \setminus \mathcal{L}$. A \textit{set of initial conditions} is defined to be $\mathcal{I}:=\{ (i,j,k) \in \mathcal{L}:(i+1,j+1,k+1) \notin \mathcal{L} \}$. Let $\mathfrak{I}$ denote the set of all sets of initial conditions. The set of initial conditions corresponding to $\mathcal{L}=\{(i,j,k)\in \mathbb{Z}_{\leq 0}: i+j+k \leq 1-n\}$ will be denoted by  $I(n)$ and is called the standard initial conditions of order $n$.

\smallskip

If we assign formal variables $f_{i,j,k}:=x_{i,j,k}$ for ${(i,j,k)}$ in a set of initial conditions and solve for $f_{i,j,k}$ where $(i,j,k) \in \mathcal{U}$, we obtain rational functions in $x_{i,j,k}$. 
In \cite{FZ01}, Fomin and Zelevinsky showed using cluster algebra techniques that these rational functions are Laurent polynomials in $x_{i,j,k}$ with coefficients in $\mathbb{Z}$.

\smallskip

In \cite{CS04}, Carroll and Speyer studied a more general version of the cube recurrence, which they call the edge-variables version. Define  \textit{edge variables} $a_{i,j},b_{i,k},c_{i,j}$ for each $i,j,k \in \mathbb{Z}_{\leq 0}$. A function $g:\mathbb{Z}_{\leq 0} ^3 \rightarrow \mathbb{R}_{> 0}$ satisfies the \textit{edge-variables version} of the cube recurrence if
$$
g_{i,j,k}g_{i-1,j-1,k-1}=b_{i,k}c_{i,j}g_{i-1,j,k}g_{i,j-1,k-1}\\+a_{i,k}c_{i,j}g_{i,j-1,k}g_{i-1,j,k-1}+a_{j,k}b_{i,k}g_{i,j,k-1}g_{i-1,j-1,k},
$$
for $(i,j,k) \in \mathbb{Z}^3_{\leq 0}$.
Carroll and Speyer constructed combinatorial objects called \textit{groves}(See Figure \ref{grove} (a) for an example), which they showed are in bijection with the monomials in the Laurent polynomial generated by the edge-variables version of the cube recurrence. This was used to give a combinatorial proof of the Laurent property.
\begin{figure}
\centering
\subcaptionbox{A grove on $I(3)$.}
{\includegraphics[width=0.3\textwidth]{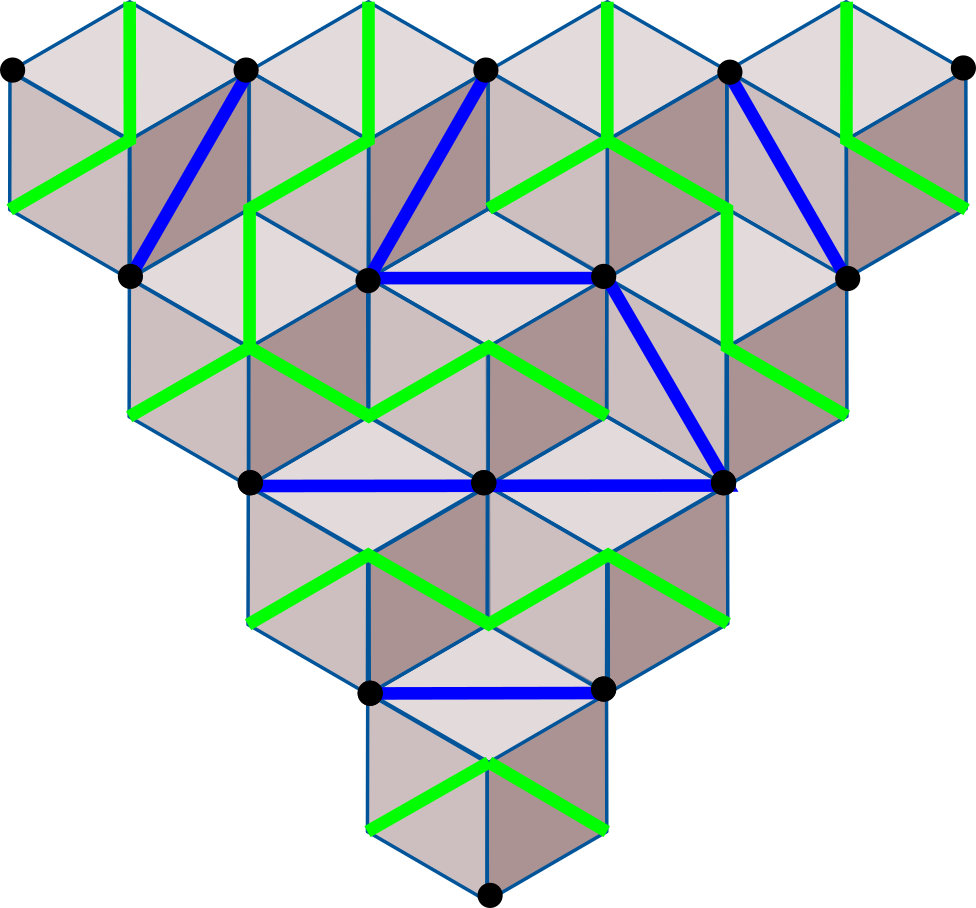}}
\subcaptionbox{The corresponding  simplified grove.}
{\includegraphics[width=0.3\textwidth]{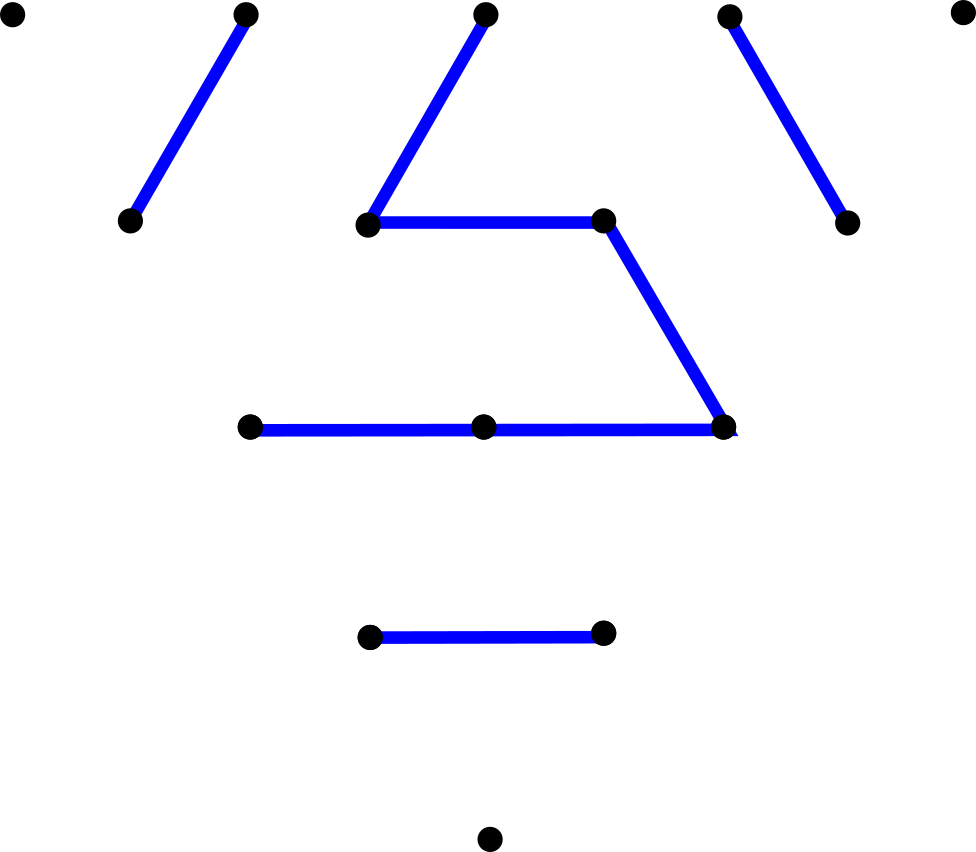}}
\caption{}\label{grove}
\end{figure}

Groves on the standard initial conditions $I(n)$ are in bijection with with spanning forests of a portion of the triangular lattice where each component of the forest connects boundary vertices in a prescribed manner (see Figure \ref{grove} (b)). Petersen and Speyer (\cite{PS05}) proved an arctic circle theorem for groves: For large $n$, a uniformly random simplified grove on $I(n)$, rescaled by a factor of $n$ so that it is now supported on the unit triangle, appears deterministic outside the inscribed circle.\\
\begin{figure}
\centering
\subcaptionbox{}
{\includegraphics[width=0.7\textwidth]{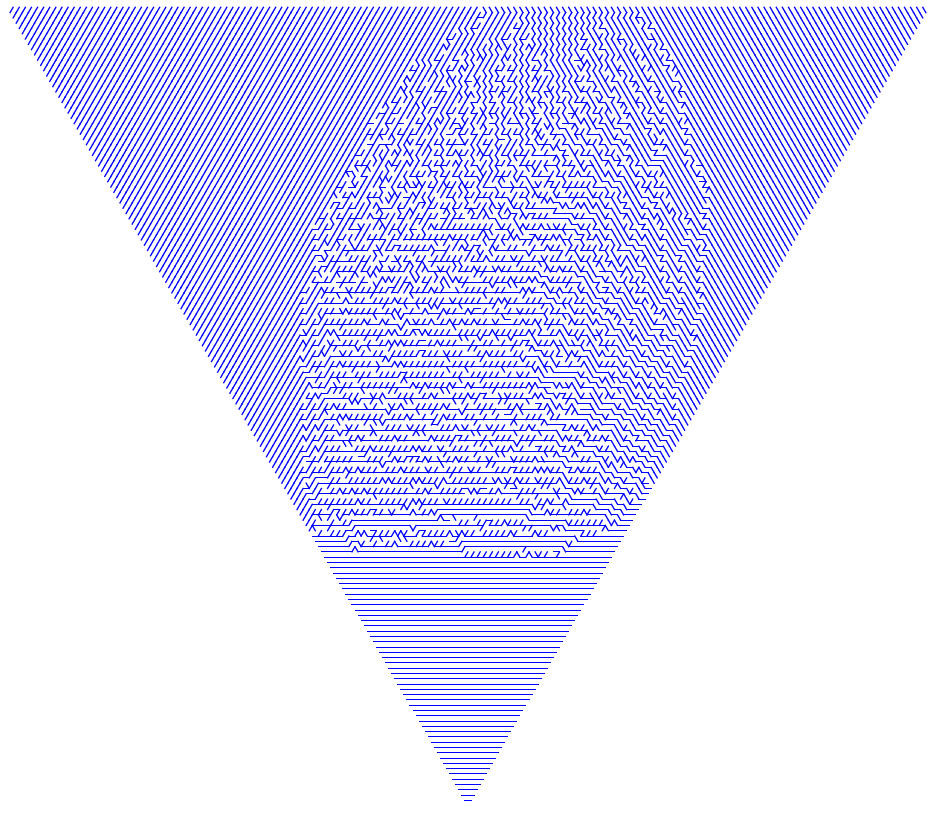}}
\subcaptionbox{The arctic curve, along with macroscopic regions labeled according to the points of the Newton polygon that correspond to the EGM describing local statistics in the region.}
{\includegraphics[width=0.7\textwidth]{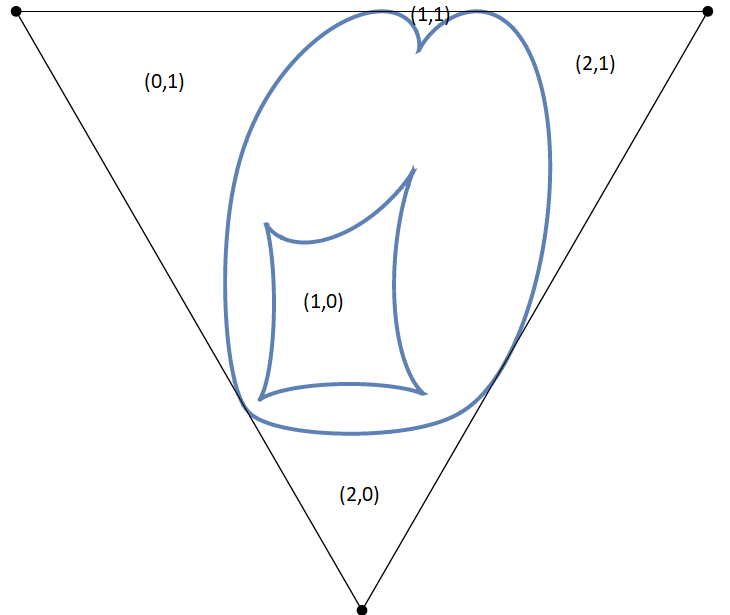}}
\caption{}\label{et}
\end{figure}
In the present paper, we extend the arctic circle theorem to a large class of probability measures on groves. There are two natural probability measures one can consider on groves:\\
\begin{itemize}
\item Given a positive real valued function $f$ satisfying the cube recurrence, we can put a probability measure on groves where each grove gets a probability proportional to the value of the monomial associated to it in the bijection of Carroll and Speyer. We denote this probability measure by $\mathbb{P}^f_{\mathcal{I}}$.
\item We can define a conductance function $C$, a positive real valued function on the edges of the triangular lattice, and consider the Boltzmann distribution, assigning to a grove $G$ the probability,
$$
\mathbb{P}^C_{\mathcal{I}}(G) \propto \prod_{\text{Edges }e \in G}C(e).
$$This is the natural measure to put on spanning forests from the point of view of statistical mechanics and generalizes the spanning tree measure.
\end{itemize}

There is a way to associate a conductance function $C^f$ to a function $f:\mathbb{Z}^3\ra \mathbb{R}$ due to Fomin and Zelevinsky(\cite{FZ01}, see also \cite{GK12}), such that the cube recurrence for $f$ becomes the resistor network $Y-\Delta$ transformation(due to Kennelly (\cite{Kennelly})) for $C^f$. We show that under this change of variables, the probability measures  $\mathbb{P}^f_{\mathcal{I}}$ and $\mathbb{P}^C_{\mathcal{I}}$ coincide (see Theorem \ref{mainthm}) and that the map $f \mapsto C^f$ is surjective. This lets us define our class of probability measures on groves in terms of conductance functions, but still allows us to exploit the algebraic structure of the $f$ functions and the cube recurrence to compute the edge probability generating functions as in \cite{PS05}.\\

Our class of probability measures come from periodic conductance functions on the triangular lattice. This however leads to an infinite system of linear equations for the edge probability generating functions. We further impose the condition that the conductance function is periodic under a cluster modular transformation (defined in Section \ref{cmtsection}) to obtain a finite linear system(Theorem \ref{pgenthm}).\\ 

We derive asymptotic edge probabilities using the machinery developed by Baryshnikov, Pemantle and Wilson(\cite{PW02},\cite{PW04},\cite{BP11} and \cite{PW13}). Our generating functions have isolated singularities with degree greater than $2$ and therefore fall outside the class of quadratic singularities considered in \cite{BP11}, but for specific examples, we see that their techniques still work. This in particular leads to explicit computations of arctic curves (see for example Figure \ref{et} (b)). \\

By analogy with the dimer model(see \cite{KOS06}, \cite{KO07}), a generic conductance function is expected to give rise to a limit shape where there are macroscopic regions corresponding to each lattice point in the Newton polygon of the resistor network(see sections \ref{vbl} and \ref{egm}). Figure \ref{et} suggests that although the class of conductances functions we consider lies in a closed subvariety of $\Z^2$-periodic conductances, it is still sufficiently general to exhibit all the possible macroscopic phases.

\textbf{Acknowledgements}. I would like to thank Richard Kenyon for suggesting this problem and many helpful discussions that led to this paper.

\section{Groves and the cube recurrence}\label{1}
\subsection{Groves}

\smallskip

\smallskip

We recall some some terminology and basic properties of groves from \cite{CS04}. A \textit{rhombus} is any set in one of the following three forms for $(i,j,k) \in \mathbb{Z}_{\leq 0} ^3$:
$$
r_a(i,j,k):=\{(i,j,k),(i,j-1,k),(i,j,k-1),(i,j-1,k-1)\}
$$
$$
r_b(i,j,k):=\{(i-1,j,k),(i,j,k),(i,j,k-1),(i-1,j-1,k)\}
$$
$$
r_c(i,j,k):=\{(i,j,k),(i-1,j,k),(i,j-1,k),(i-1,j-1,k)\}.
$$
We call the edges $E_q(i,j,k):=\{(i,j-1,k),(i,j,k-1)\}$ and $e_q(i,j,k):=\{(i,j,k),(i,j-1,k-1)\}$ the \textit{long diagonal} and the \textit{short diagonal} of the rhombus $r_q(i,j,k)$ respectively for $q \in \{a,b,c\}$. We denote the set of all diagonals of rhombi by $\mathcal{D}$.

\smallskip
\begin{figure}
\centering
\includegraphics[width=0.3\textwidth]{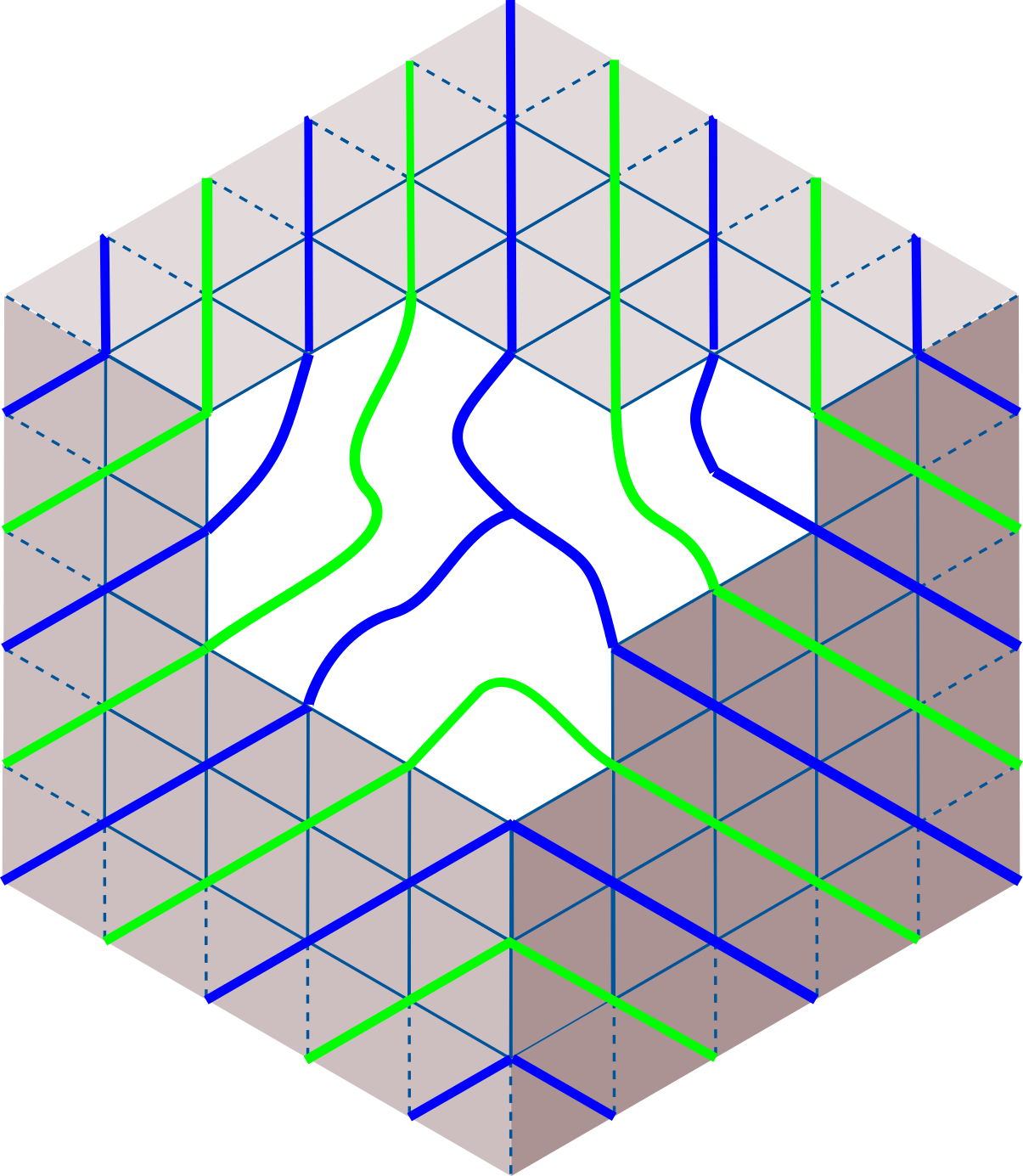}
\caption{The connectivity of a grove.}\label{conn}
\end{figure}

Let $\Gamma_{\mathcal{I}}$ be the graph with vertex set $\mathcal{I}$ and edges the long and short diagonals appearing in each rhombus in $\mathcal{I}$. Then an \textit{$\mathcal{I}$-grove} is a subgraph $G$ of $\Gamma_{\mathcal{I}}$ with the following properties:

\begin{itemize}
\item The vertex set of $G$ is all of $\mathcal{I}$.
\item For each rhombus in $\mathcal{I}$, exactly one of the two diagonals occurs in $G$.
\item There exists $N$ such that if all the vertices of a rhombus satisfies $i+j+k <- N$, the short diagonal occurs.
\item For $N$ large enough, every component of $G$ contains exactly one of the following sets of vertices, and each such set is contained in a component of $G$ (Figure \ref{conn}),
\begin{itemize}
\item $\{(0, p, q),(p, 0, q)\}$, $\{(p, q, 0),(0, q, p)\}$, and $\{(q, 0, p),(q, p, 0)\}$ for all $p, q$ with
$0 > p > q$ and $p + q \in \{-N-1,-N-2 \}$;
\item $\{(0, p, p),(p, 0, p),(p, p, 0)\}$ for $2p \in \{-N - 1, -N - 2\}$;
\item $\{(0, 0, q)\}$, $\{(0, q, 0)\}$, and $\{(q, 0, 0)\}$ for $q \leq -N - 1$.

\end{itemize}

\end{itemize}

\smallskip
It is shown in \cite{CS04} that groves on standard initial conditions $I(n)$ are completely determined by their long diagonal edges. Therefore, we can represent groves as a spanning forest of a finite portion of the triangular lattice(see Figure \ref{grove}), which is called a \textit{simplified grove}.
 
\smallskip

Suppose $\mathcal{I} \in \mathfrak{I}$ is a set of initial conditions. The edge-variables version of the cube recurrence gives $g_{0,0,0}$ as a rational function in the variables 
$\{a_{j,k},b_{i,k},c_{i,j},g_{i,j,k}\}_{(i,j,k)\in \mathcal{I}}$. The following is the main result of \cite{CS04}.
\begin{theorem}[\cite{CS04}]\label{theorem1}
$$g_{0,0,0}=\sum_{G \in \mathcal{G}(\mathcal{I})}M(G),$$
where 
$$M(G)=\left(\prod_{e_a(i,j,k)\in E(G)}a_{j,k}\right)\left(\prod_{e_b(i,j,k)\in E(G)}b_{i,k}\right)\left(\prod_{e_c(i,j,k)\in E(G)}c_{i,j}\right)m_g(G)$$
and 
$$
m_g(G)=\prod_{(i,j,k)\in \mathcal{I} }g_{i,j,k}^{deg(i,j,k)-2},
$$
where $deg(i,j,k)$ is the degree of the vertex $(i,j,k)$ in the grove $G$.
\end{theorem}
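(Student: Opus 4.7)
The plan is to prove the statement by induction on the (finite) size of $\mathcal{U} = \mathbb{Z}_{\leq 0}^3 \setminus \mathcal{L}$. For the base case $\mathcal{U} = \varnothing$, the point $(0,0,0)$ lies in $\mathcal{I}$, so $g_{0,0,0}$ is itself an initial variable; one checks directly that the grove axioms (rhombus-diagonal selection together with the prescribed boundary connectivity) force a single contribution to the right-hand side equal to $g_{0,0,0}$.

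For the inductive step, I pick $(i,j,k) \in \mathcal{U}$ minimizing $i+j+k$. Minimality and the lower-cone closedness of $\mathcal{L}$ force the seven lattice points strictly below $(i,j,k)$ to lie in $\mathcal{L}$, and in particular $(i-1,j-1,k-1) \in \mathcal{I}$. Setting $\mathcal{L}' := \mathcal{L}\cup\{(i,j,k)\}$, I verify that the new initial condition set differs from the old only locally,
\[
\mathcal{I}' \;=\; \bigl(\mathcal{I} \setminus \{(i-1,j-1,k-1)\}\bigr) \cup \{(i,j,k)\},
\]
and that $|\mathcal{U}'| = |\mathcal{U}|-1$. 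By the inductive hypothesis, $g_{0,0,0}=\sum_{G' \in \mathcal{G}(\mathcal{I}')} M(G')$.

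To promote this identity to the desired one for $\mathcal{I}$, I solve the cube recurrence at $(i,j,k)$ for $g_{i,j,k}$ as a rational expression in $g_{i-1,j-1,k-1}$, the six intermediate vertex values of the cube, and the edge variables. Substituting this into each factor $g_{i,j,k}^{d-2}$ appearing in $M(G')$ (where $d$ is the grove-degree of $(i,j,k)$ in $G'$) and regrouping, one finds that the three monomials on the right of the cube recurrence correspond to three possible local configurations of diagonals on the rhombi $r_a, r_b, r_c$ hinged at the diagonal from $(i,j,k)$ to $(i-1,j-1,k-1)$, with the edge variables $a_{j,k}, b_{i,k}, c_{i,j}$ appearing on the nose as the short-diagonal labels prescribed by $M(G)$.

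The main obstacle is thus a local bijection: one must exhibit, for each $G' \in \mathcal{G}(\mathcal{I}')$, a decomposition of the three resulting terms as a sum indexed by groves $G \in \mathcal{G}(\mathcal{I})$ whose edges outside the cube $\{i-1,i\}\times\{j-1,j\}\times\{k-1,k\}$ coincide with those of $G'$, in such a way that the rhombus constraint, the prescribed boundary connectivity, and the exponents $\deg(\cdot)-2$ at all eight cube vertices are simultaneously compatible. This amounts to a finite case analysis over local configurations at the cube, and is the combinatorial shadow of the $Y$--$\Delta$ move in the resistor-network picture alluded to in the introduction. Once this local check is in hand, summing over $G'$ transforms the left-hand side into $\sum_{G \in \mathcal{G}(\mathcal{I})} M(G)$ and closes the induction.
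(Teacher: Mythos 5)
The paper does not prove this theorem; it is quoted verbatim from \cite{CS04}. So the right comparison is against Carroll--Speyer's original argument and against the proof of the analogous Theorem~\ref{mainthm} later in the paper, both of which do follow the cube-removal induction you sketch. Your skeleton---induct on $|\mathcal{U}|$, pick $(i,j,k)\in\mathcal{U}$ with $i+j+k$ minimal so that the relevant neighbours lie in $\mathcal{I}$, observe $\mathcal{I}'=(\mathcal{I}\setminus\{(i-1,j-1,k-1)\})\cup\{(i,j,k)\}$ with $|\mathcal{U}'|=|\mathcal{U}|-1$, and substitute the cube recurrence for $g_{i,j,k}$---is correct as far as it goes.

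The genuine gap is precisely what you flag as ``the main obstacle,'' and the one paragraph you devote to it mischaracterizes where the difficulty lies. The vertex $(i,j,k)$ is a local maximum in $\mathcal{I}'$ and lies on exactly the three rhombi $r_a(i,j,k),r_b(i,j,k),r_c(i,j,k)$, so its grove-degree $d$ is $1$, $2$, or $3$ and the exponent $d-2$ is $-1$, $0$, or $1$. When $d=3$, substituting into $g_{i,j,k}^{1}$ does expand into the three monomials of the recurrence, matching three $\mathcal{I}$-groves with $\deg(i-1,j-1,k-1)=1$; when $d=2$ the factor is $1$ and nothing happens. Your paragraph describes only this one-to-three picture. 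But when $d=1$ the exponent is negative: you are substituting the three-term sum into a \emph{denominator}, and the result is a rational function, not a sum of grove monomials. The correct resolution is a three-to-one grouping: the three $\mathcal{I}'$-groves agreeing outside the cube and differing only in which short diagonal meets $(i,j,k)$ must be summed together so that the denominator cancels and the total collapses to a single $M(G)$ with $\deg(i-1,j-1,k-1)=3$; this is exactly the three-to-one move in Figure~\ref{gs}(b), which the paper's proof of Theorem~\ref{mainthm} invokes in the conductance-weighted setting. Your proposal contains neither this grouping nor any verification that the configurations it produces still satisfy the global connectivity axiom defining groves (the prescribed boundary-component condition), which is what makes the statement a theorem rather than bookkeeping. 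Finally, the base case is not dispatched by appealing to ``the grove axioms'': you would need to actually exhibit the unique grove on the flat initial conditions and check that its $M$-value reduces to $g_{0,0,0}$, which requires a concrete count of vertex degrees and short-diagonal edge contributions rather than an appeal to uniqueness.
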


Now suppose $f:\mathbb{Z}_{\leq 0} \rightarrow \mathbb{R}_{>0}$ satisfies the cube recurrence. Since $f_{i,j,k}$ are positive real numbers, by Theorem \ref{theorem1},
$$
\mathbb{P}^f_{\mathcal{I}}(G)=\frac{m_f(G)}{f_{0,0,0}}
$$
defines a probability measure on $\mathcal{G}(\mathcal{I})$. Therefore any function $f$ that satisfies the cube recurrence induces a family of probability measures $\{\mathbb{P}^f_{\mathcal{I}}\}_{\mathcal{I} \in \mathfrak{I}}$.

\subsection{Conductance variables and the $Y-\Delta$ transformation} \label{2}

A function $C:\mathcal{D} \ra \mathbb{C}$ satisfying $C(E_q(i,j,k))=1/C(e_q(i,j,k))$ for all $q \in \{a,b,c\}, (i,j,k)\in \Z_{\leq 0}$ is called a \textit{conductance function}. We simplify notation by writing $C_q(i,j,k)=C(E_q(i,j,k))$ and $c_q(i,j,k)=C(e_q(i,j,k))$. A positive real valued conductance function $C$ determines a family of Boltzmann probability measures on groves
$\{\mathbb{P}^C_{\mathcal{I}}\}_{\mathcal{I} \in \mathfrak{I}}$:
$$
\mathbb{P}^C_{\mathcal{I}}(G):=\frac{w(G)}{Z},
$$
for $G \in \mathcal{G}(\mathcal I)$, where $w(G)=\prod_{E_q(i,j,k) \in G}C_q(i,j,k)$ is the product of the conductances of the long edges appearing in $G$ and $Z_{\mathcal I}$ is the partition function,
$$
Z_{\mathcal{I}}=\sum_{G \in \mathcal{G}(\mathcal{I})}\prod_{E_q(i,j,k) \in G}C_q(i,j,k).
$$
A conductance function $C$ is \textit{$Y-\Delta$ consistent} if for all $q,(i,j,k)$, we have
\begin{align}
C_a(i,j,k)c_a(i-1,j,k)
&=C_a(i,j,k)C_b(i,j,k)+C_a(i,j,k)C_c(i,j,k)+C_b(i,j,k)C_c(i,j,k),\nonumber\\
C_b(i,j,k)c_b(i,j-1,k)
&=C_a(i,j,k)C_b(i,j,k)+C_a(i,j,k)C_c(i,j,k)+C_b(i,j,k)C_c(i,j,k),\nonumber \\
C_c(i,j,k)c_c(i,j,k-1)
&=C_a(i,j,k)C_b(i,j,k)+C_a(i,j,k)C_c(i,j,k)+C_b(i,j,k)C_c(i,j,k).\label{ydeqns}
\end{align}
We will denote by $\mathcal{C}$ the set of $Y-\Delta$ consistent conductance functions.\\
\begin{figure}
\centering
\includegraphics[width=0.8\textwidth]{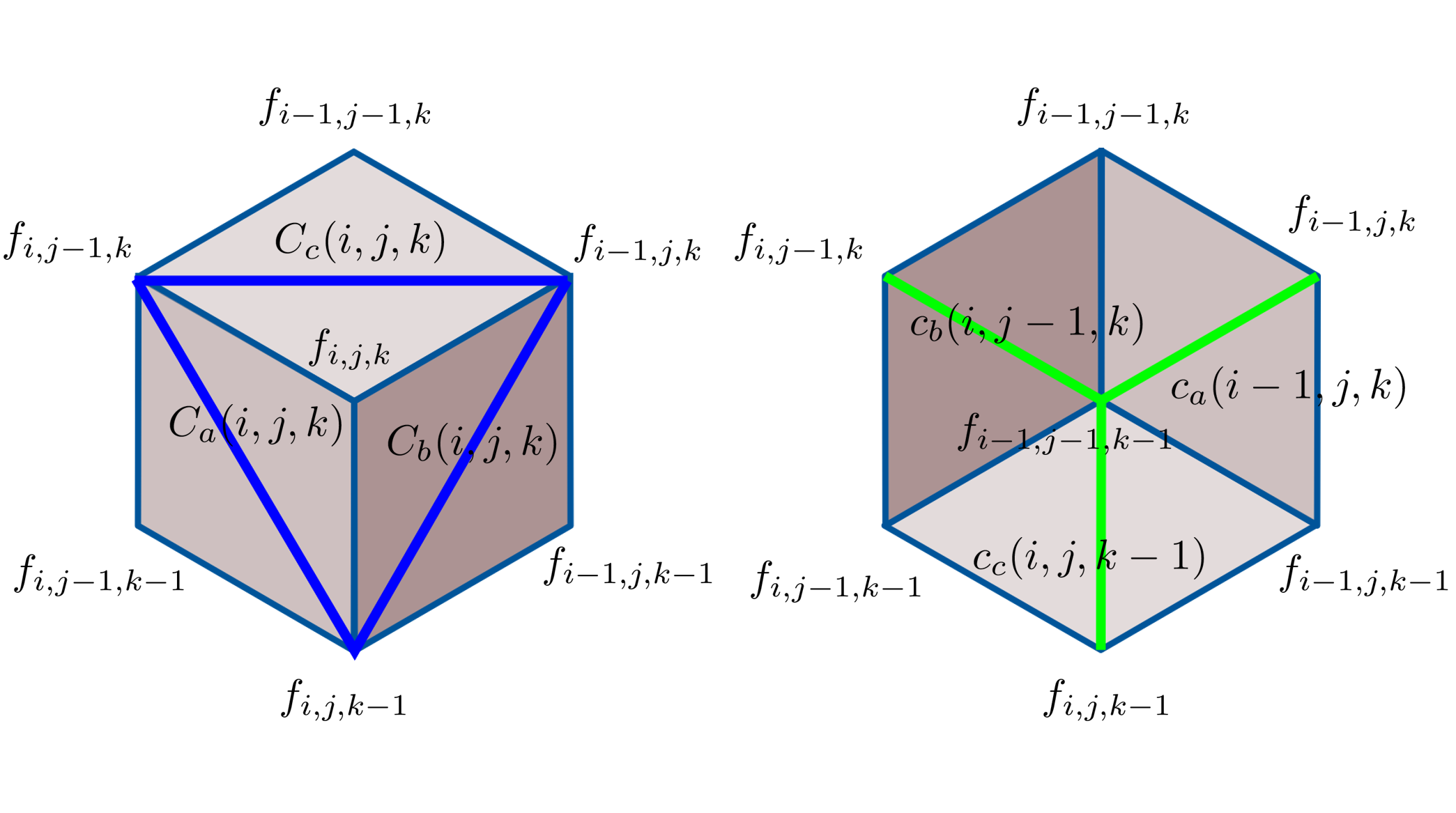}
\caption{The $Y-\Delta$ transformation.}\label{ydpic}
\end{figure}
Let $f \in \mathcal{F}$ and let us define a conductance function $C^f$(see Figure \ref{ydpic}),

\begin{align}C^f_a(i,j,k)=\frac{f_{i,j-1,k}f_{i,j,k-1}}{f_{i,j,k}f_{i,j-1,k-1}},\nonumber \\
C^f_b(i,j,k)=\frac{f_{i-1,j,k}f_{i,j,k-1}}{f_{i,j,k}f_{i-1,j,k-1}},\nonumber \\
C^f_c(i,j,k)=\frac{f_{i-1,j,k}f_{i,j-1,k}}{f_{i,j,k}f_{i-1,j-1,k}}.\label{cfeqns}\end{align}
It was observed by \cite{FZ01},\cite{GK12} that the $Y-\Delta$ equations (\ref{ydeqns}) for $C^f$ reduce to the cube recurrence for $f$, and therefore $C^f$ is $Y-\Delta$ consistent. Therefore we obtain a function 
\begin{align*}p:\mathcal{F} &\ra \mathcal{C}\\
f &\mapsto C^f.
\end{align*}
\begin{lemma}\label{ctof}$p:\mathcal{F} \ra \mathcal{C}$ is surjective.
\end{lemma}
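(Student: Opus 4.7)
The plan is to construct, for any prescribed $C\in\mathcal{C}$, a function $f:\Z^3_{\leq 0}\to\R_{>0}$ with $C^f=C$; since the identities (\ref{cfeqns}) reduce the $Y-\Delta$ equations (\ref{ydeqns}) to the cube recurrence (the Fomin--Zelevinsky observation already invoked above), any such $f$ will automatically lie in $\mathcal{F}$ and thus be a preimage of $C$ under $p$.

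I build $f$ in three stages, inverting (\ref{cfeqns}). First, assign arbitrary positive values $f_{i,0,0}$, $f_{0,j,0}$, $f_{0,0,k}$ for all $i,j,k\leq 0$ on the three coordinate axes; this freedom matches the four-parameter gauge symmetry $f_{i,j,k}\mapsto \lambda\mu^i\nu^j\rho^k f_{i,j,k}$ in the kernel of $p$. Second, extend $f$ to each coordinate plane by solving the relation from (\ref{cfeqns}) whose rhombus lies in that plane: on $\{k=0\}$ the recursion
$$f_{i,j,0}=\frac{f_{i,j+1,0}\,f_{i+1,j,0}}{C_c(i+1,j+1,0)\,f_{i+1,j+1,0}}$$
uniquely determines $f$ from its values on the axes $\{j=k=0\}$ and $\{i=k=0\}$, and analogous recursions via $C_a$ and $C_b$ fill in $\{i=0\}$ and $\{j=0\}$. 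Third, for each fixed $i\leq -1$, extend $f$ to the remainder of the slice $\{i\}\times\Z^2_{\leq 0}$ using only the $C_a$-equation,
$$f_{i,j,k}=\frac{f_{i,j,k+1}\,f_{i,j+1,k}}{C_a(i,j+1,k+1)\,f_{i,j+1,k+1}},$$
iterating on decreasing $j+k$; every value on the right-hand side is already defined, either from the preceding stage or from an earlier step of this one.

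By construction $C^f_a=C_a$ throughout $\Z^3_{\leq 0}$, and the identities $C^f_b=C_b$, $C^f_c=C_c$ hold on the planes where they were used to build $f$. The main remaining task—which I expect to be the principal obstacle—is to verify $C^f_b=C_b$ and $C^f_c=C_c$ at every other point. I would do this by induction on $-i$, with a secondary induction on $-(j+k)$: one substitutes the $C_a$-defining formulas for $f_{i,j,k}$ and its neighbours into the ratios defining $C^f_b(i,j,k)$ and $C^f_c(i,j,k)$, and then collapses the resulting expression using the identities (\ref{ydeqns}). The crucial input is that all three $Y-\Delta$ equations share the right-hand side $C_aC_b+C_aC_c+C_bC_c$; this symmetric coincidence is exactly what allows a ratio of $C_a$-values to be converted into the target $C_b$- or $C_c$-value, and is precisely the global compatibility that makes the three defining equations (\ref{cfeqns}) jointly solvable. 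Once $C^f=C$ is established on all of $\Z^3_{\leq 0}$, surjectivity of $p$ follows.
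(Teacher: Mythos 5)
Your construction is essentially the paper's: set $f$ on the three coordinate axes (the paper sets them to $1$; you correctly note the four-parameter gauge freedom in $\ker p$), extend to the coordinate planes by solving the relations $C^f_q=C_q$, and then extend to the interior — the paper using the cube recurrence, you using the $C_a$-relation, which produce the same $f$. You are more explicit than the paper's one-line proof about the remaining consistency check; your sketch of it is sound, and in fact simplifies to a single-variable induction, since the shared right-hand side of (\ref{ydeqns}) gives $C_b(i,j,k)/C_b(i,j-1,k)=C_a(i,j,k)/C_a(i-1,j,k)$ for $C$, while the same ratio identity holds for $C^f$ as a purely algebraic fact (the two rational expressions in $f$ cancel identically), so $C^f_a\equiv C_a$ and agreement of $C^f_b$ with $C_b$ on $\{j=0\}$ propagate $C^f_b\equiv C_b$ by induction on $-j$, and analogously for $C_c$.
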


\begin{proof}
Given $C \in \mathcal{C}$, we can construct a function $f$ such that $p(f)=C$ as follows:\\
We define for all $i,j,k \in \Z_{\leq0}$,
$$
f(i,0,0)=f(0,j,0)=f(0,0,k)=1.
$$
The equations (\ref{cfeqns}) now uniquely define $f$ on $(i,j,0),(0,j,k),(i,j,0)$ for $i,j,k \in \Z_{\leq 0}$. We define $f$ everywhere else using the cube recurrence.
\end{proof}

\subsection{Grove Shuffling} \label{3}

\begin{figure}
\centering
\subcaptionbox{\label{s1}}[.45\linewidth]
{ \includegraphics[width=0.4\textwidth]{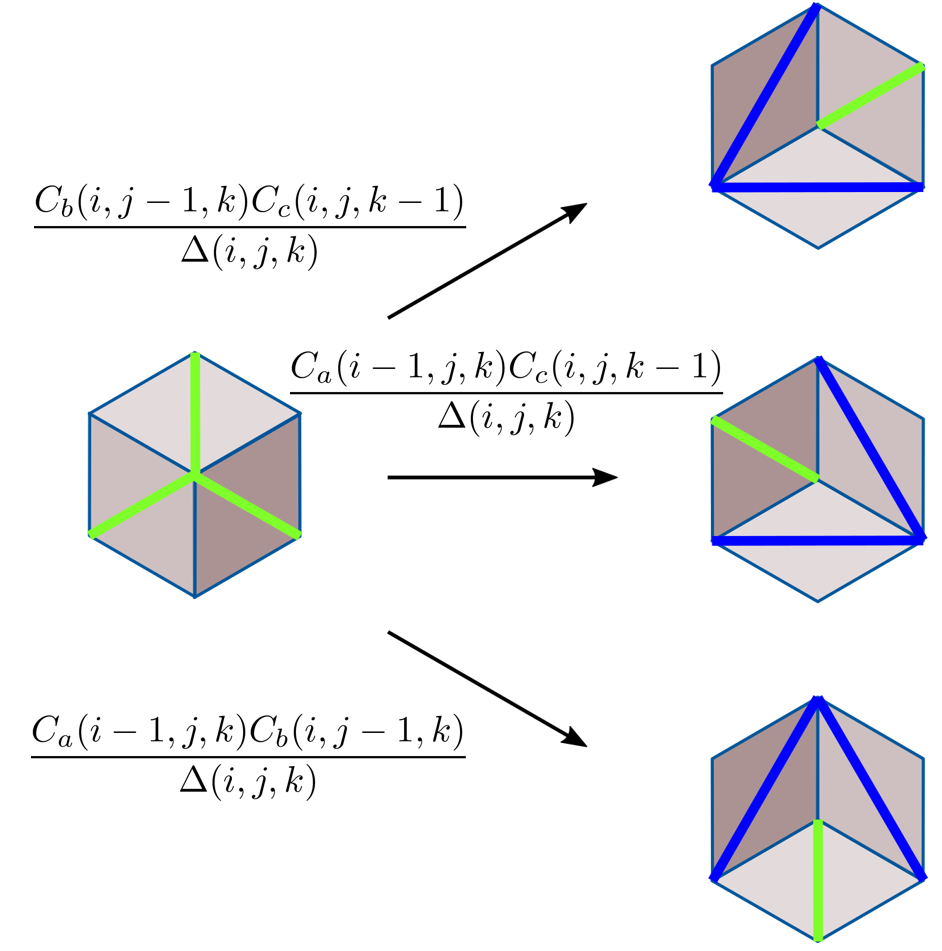}}
\subcaptionbox{\label{s2}}[1\linewidth]
{\includegraphics[width=0.4\textwidth]{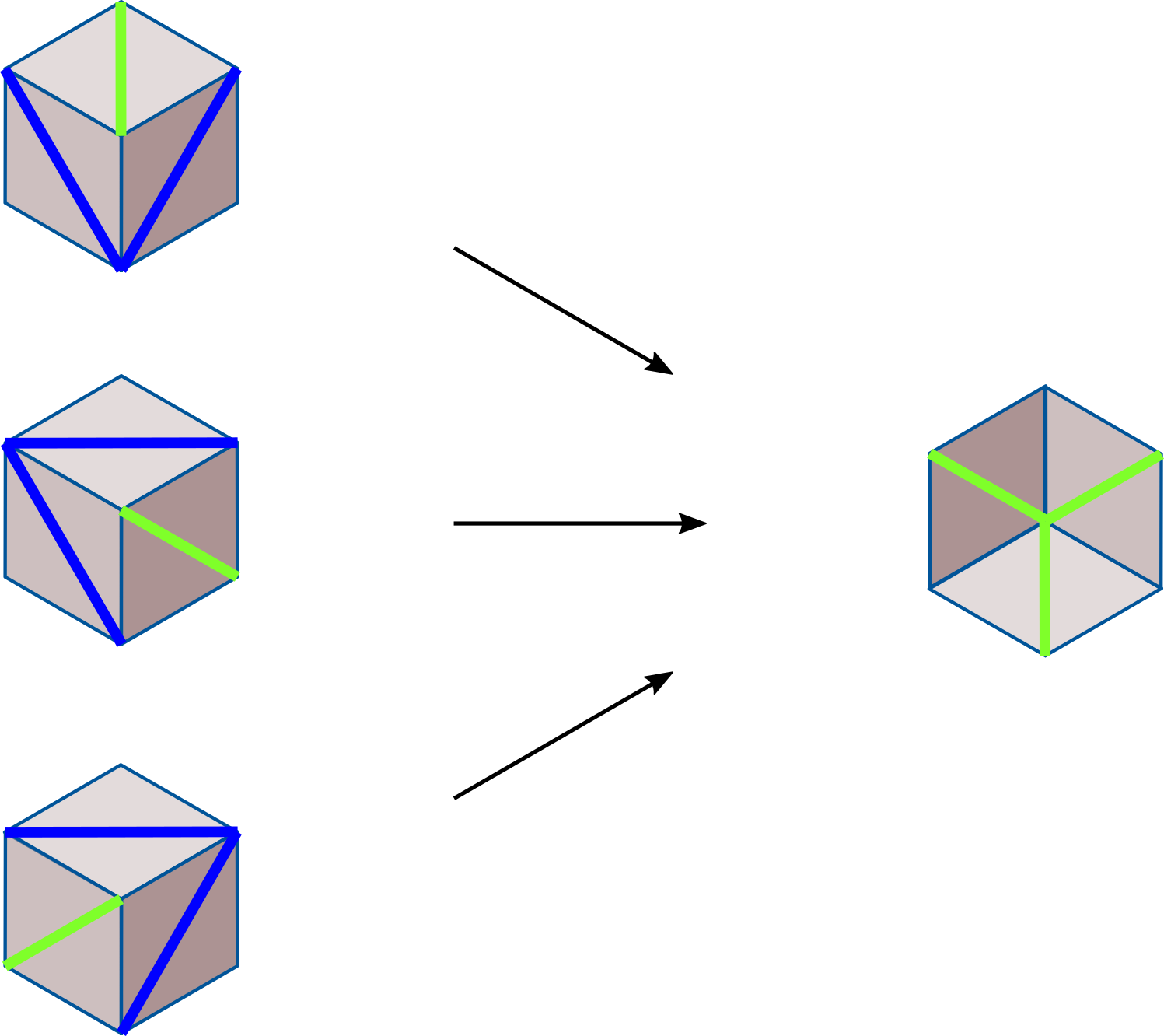}}
\subcaptionbox{\label{s3}}[1\linewidth]
{\includegraphics[width=0.4\textwidth]{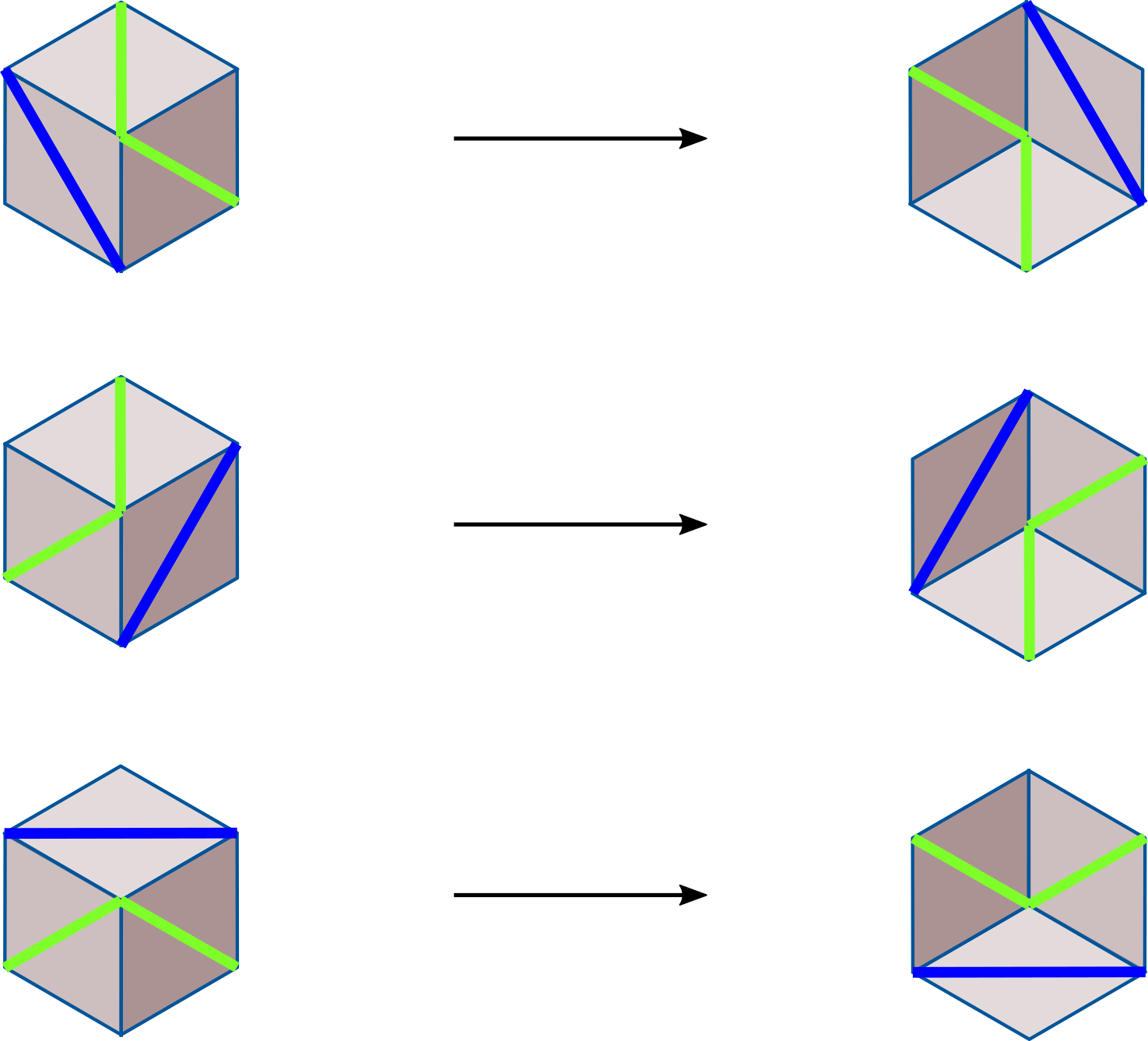}}
\caption{Grove shuffling}\label{gs}
\end{figure}

Let $C$ be a $Y-\Delta$ consistent conductance function. Let us denote 
\begin{align*}
\Delta(i,j,k)&:=(C_b(i,j-1,k)C_c(i,j,k-1)+C_a(i-1,j,k)C_c(i,j,k-1)+\\ & \quad C_a(i-1,j,k)C_b(i,j-1,k))\\&=\frac{1}{C_b(i,j,k)C_c(i,j,k)+C_a(i,j,k)C_c(i,j,k)+C_a(i,j,k)C_b(i,j,k)},
\end{align*}
where the equality is a consequence of (\ref{ydeqns}). 
We define a generalization of the edge-variables version of the cube recurrence:

\begin{align*}
g_{i,j,k}g_{i-1,j-1,k-1}&=\frac{1}{\Delta(i,j,k)}(C_b(i,j-1,k)C_c(i,j,k-1)g_{i-1,j,k}g_{i,j-1,k-1}\\&+C_a(i-1,j,k)C_c(i,j,k-1)g_{i,j-1,k}g_{i-1,j,k-1}\\&+C_a(i-1,j,k)C_b(i,j-1,k)g_{i,j,k-1}g_{i-1,j-1,k}),
\end{align*}
for $(i,j,k) \in \mathbb{Z}^3_{\leq 0}$.

Grove shuffling is a local move that generates groves with measure $\mathbb{P}^C_{\mathcal{I}}(G)$ and couples the probability measures for different initial conditions in a convenient way(see Figure \ref{gs}). Grove shuffling takes a cube, removes it and replaces a configuration on the left in Figure \ref{gs} with a corresponding configuration on the right. The only random part is (a) where the configuration on the left is replaced with one of the configurations on the right with probabilities indicated on the arrows. 

\smallskip
We can generate a random grove on initial conditions $\mathcal{I}$ as follows. Start with the unique grove on $\{(i,j,k)\in \mathbb{Z}^3_{\leq 0}\: \text{max}\{i,j,k\}=0\}$. Use grove shuffling to remove cubes till you end up with initial conditions $\mathcal{I}$. The following lemma shows that this can always be done.

\begin{lemma}[\cite{CS04}]\label{lemmain} 
Suppose $(0,0,0) \in \mathcal{I}$. Then there exist $i,j,k \leq 0$ such that $(i-1,j,k),(i,j-1,k-1),(i,j-1,k),(i-1,j,k-1),(i,j,k-1),(i-1,j-1,k) \in \mathcal{I} $ (and so $(i,j,k)\in \mathcal{U}$)
\end{lemma}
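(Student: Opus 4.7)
The plan is to take $(i,j,k)$ to be a coordinate-wise minimal element of $\mathcal{U}$ with respect to the componentwise partial order on $\mathbb{Z}^3$. Such a minimal element exists because $\mathcal{U}$ is a finite subset of $\mathbb{Z}^3_{\leq 0}$, which is nonempty in the setting of the lemma (the hypothesis rules out the degenerate case $\mathcal{U}=\emptyset$). Fix such a minimal $(i,j,k)$; in particular $i,j,k\leq 0$.

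The bulk of the proof is then to verify that the six ``middle'' points $(i-1,j,k)$, $(i,j-1,k)$, $(i,j,k-1)$, $(i-1,j-1,k)$, $(i-1,j,k-1)$, $(i,j-1,k-1)$ all lie in $\mathcal{I}=\{(a,b,c)\in\mathcal{L}:(a+1,b+1,c+1)\notin\mathcal{L}\}$. For the first defining condition, each middle point lies in $\mathbb{Z}^3_{\leq 0}$ (since $i-1,j-1,k-1\leq -1\leq 0$) and is strictly below $(i,j,k)$ in the coordinate-wise order; minimality of $(i,j,k)$ in $\mathcal{U}$ then forces each middle point into $\mathcal{L}$. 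For the second defining condition, I have to show that the $(1,1,1)$-shift of each middle point is not in $\mathcal{L}$. These six shifts are precisely the points $(i,j,k)+v$ with $v\in\{(1,0,0),(0,1,0),(0,0,1),(1,1,0),(1,0,1),(0,1,1)\}$, each of which is coordinate-wise above $(i,j,k)$ with at least one strict inequality. If such a shift escapes $\mathbb{Z}^3_{\leq 0}$ --- which happens exactly when the corresponding coordinate of $(i,j,k)$ equals $0$ --- it is vacuously outside $\mathcal{L}$. Otherwise, because $\mathcal{L}$ is an order ideal in $\mathbb{Z}^3_{\leq 0}$, its complement $\mathcal{U}$ is an order filter there, so the shift --- lying above $(i,j,k)\in\mathcal{U}$ --- must itself be in $\mathcal{U}$, hence outside $\mathcal{L}$.

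Combining the two conditions shows each middle point is in $\mathcal{I}$, and $(i,j,k)\in\mathcal{U}$ by construction. I do not expect a serious obstacle: the argument is essentially ``pick a minimal element of the complement of a down-closed set and exploit that the complement is up-closed,'' and the only bookkeeping is the case split on whether a $(1,1,1)$-shift escapes $\mathbb{Z}^3_{\leq 0}$, which is resolved immediately.
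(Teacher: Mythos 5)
Your proof is correct. The paper cites this lemma from [CS04] without reproducing the argument, and what you give --- pick a coordinate-wise minimal element $(i,j,k)$ of the finite order filter $\mathcal{U}$, observe that the six middle points, being strictly below it, must lie in the order ideal $\mathcal{L}$, while their $(1,1,1)$-shifts either escape $\mathbb{Z}^3_{\leq 0}$ or lie in $\mathcal{U}$ by upward closure --- is exactly the standard argument. One remark worth flagging: the hypothesis as printed, $(0,0,0)\in\mathcal{I}$, is a typo; since $\mathcal{L}$ is an order ideal in $\mathbb{Z}^3_{\leq 0}$ and $(0,0,0)$ is its unique maximum, $(0,0,0)\in\mathcal{L}$ would force $\mathcal{L}=\mathbb{Z}^3_{\leq 0}$ and hence $\mathcal{U}=\emptyset$, leaving nothing to prove. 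The intended hypothesis is $(0,0,0)\notin\mathcal{I}$, equivalently $\mathcal{U}\neq\emptyset$, which is the reading you implicitly (and correctly) adopt.
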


The key input in the computation of the limit shape is the following theorem that generalizes Theorem \ref{theorem1}.

\begin{theorem}\label{mainthm}
Suppose $C$ is a $Y-\Delta$ consistent conductance function and let $f$ be the solution to the cube recurrence such that $p(f)=C$ from lemma \ref{ctof}. The following are true:
\begin{itemize}
\item The generalized cube recurrence satisfies for all $\mathcal I \in \mathfrak I$,
$$
g_{0,0,0}=\sum_{G \in \mathcal{G}(\mathcal{I})}\mathbb{P}^C_{\mathcal{I}}(G)m_g(G).
$$
\item Grove shuffling generates groves with probability measure $\mathbb{P}^C_{\mathcal{I}}$, regardless of the order in which cubes are shuffled.
\item The probability measures $\mathbb{P}^C_{\mathcal{I}}$ and $\mathbb{P}^f_{\mathcal{I}}$ are the same.
\item $$Z_{\mathcal{I}}=\prod \Delta(i,j,k),$$ where the product is over all $(i,j,k)$ such that the cube at $(i,j,k)$ is removed to reach $\mathcal{I}$.
\end{itemize}
\end{theorem}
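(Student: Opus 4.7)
The plan is to prove all four parts simultaneously by induction on $|\mathcal U|$, the number of lattice points removed from $\mathbb Z^3_{\leq 0}$ to form $\mathcal L$. For the base case $\mathcal U=\emptyset$ we have $\mathcal I=\{(i,j,k)\in \mathbb Z^3_{\leq 0}:\max\{i,j,k\}=0\}$, on which there is a unique grove (every rhombus takes its short diagonal): directly, $Z_{\mathcal I}=1$ matches the empty product in the fourth item; the function $f$ built in Lemma~\ref{ctof} satisfies $f\equiv 1$ throughout, so taking $g=f$ gives $g_{0,0,0}=1$ matching the first item; and both probability measures concentrate on the unique grove, giving the third. For the inductive step, apply Lemma~\ref{lemmain} to produce an inner corner $(i,j,k)\in\mathcal U$ whose six relevant neighbors belong to $\mathcal I$; letting $\mathcal I'$ be the initial conditions obtained by restoring $(i,j,k)$, the set $\mathcal I$ arises from $\mathcal I'$ by one grove-shuffling move at the cube around this point.

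The heart of the proof is a local analysis of this shuffling move, which simultaneously yields the second and fourth items. The restriction of a grove of $\mathcal I'$ to the cube neighborhood falls into one of the three cases (a), (b), (c) of Figure~\ref{gs}, and only case (a) involves a genuinely random replacement. Writing out the Boltzmann weights on the cube shows that the conditional distribution under $\mathbb P^C_{\mathcal I}$ of the three possible outcomes of case (a), given the rest of the grove, is proportional to the three conductance products $C_b(i,j-1,k)C_c(i,j,k-1)$, $C_a(i-1,j,k)C_c(i,j,k-1)$, $C_a(i-1,j,k)C_b(i,j-1,k)$ appearing in the generalized cube recurrence; normalizing yields the arrow probabilities of Figure~\ref{gs} with normalizing constant exactly $\Delta(i,j,k)$. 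A parallel but simpler check in the deterministic cases (b), (c) confirms that the ratio of partition functions is again $\Delta(i,j,k)$, giving $Z_{\mathcal I}=\Delta(i,j,k)\,Z_{\mathcal I'}$; combining with the inductive hypothesis proves the fourth item. The same analysis shows that each shuffle sends $\mathbb P^C_{\mathcal I'}$ to $\mathbb P^C_{\mathcal I}$, and since distinct shufflable cubes act on disjoint local pieces, the final distribution is independent of the shuffling order, proving the second item.

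Items one and three then drop out. For the first, the generalized cube recurrence can be read as $g_{i,j,k}g_{i-1,j-1,k-1}=\sum_\alpha p_\alpha(i,j,k)\,g_{\mathrm{nbrs},\alpha}$, where the $p_\alpha$ are the shuffling probabilities; substituting the inductive identity for $\mathcal I'$ and regrouping along the grove-shuffling map gives exactly the identity in the first item for $\mathcal I$. For the third item, either verify that $\mathbb P^f_{\mathcal I}$ satisfies the same shuffling update by computing the conditional probability from (\ref{cfeqns}) and comparing, or compute $m_f(G)/f_{0,0,0}$ directly: the degree identity of Theorem~\ref{theorem1} applied with the $f$-ratios in (\ref{cfeqns}) telescopes to $\prod_{E_q(i,j,k)\in G}C_q^f(i,j,k)/Z_{\mathcal I}$ after using the fourth item to identify the partition function.

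The principal obstacle is the local bookkeeping in the second paragraph: classifying the grove configurations supported on the seven-vertex neighborhood of a removable cube, computing the Boltzmann weight of each possible outcome, and checking that the normalizing constant matches $\Delta(i,j,k)$ through the two expressions given by (\ref{ydeqns}). Once this case analysis is in place, the remaining steps are algebraic consequences of the cube recurrence for $g$ and the definition of $C^f$.
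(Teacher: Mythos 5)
Your proposal follows essentially the same route as the paper: induction on $|\mathcal U|$, Lemma~\ref{lemmain} to locate a removable cube, a case analysis on the degree of $(i-1,j-1,k-1)$ (your cases (a), (b), (c) are the paper's degrees $1$, $3$, $2$) establishing $Z_{\mathcal I}=\Delta(i,j,k)\,Z_{\mathcal I'}$ and the shuffling update, the key identity $C_b(i,j-1,k)C_c(i,j,k-1)/\Delta(i,j,k)=f_{i-1,j,k}f_{i,j-1,k-1}/(f_{i-1,j-1,k-1}f_{i,j,k})$ to match $\mathbb P^C$ and $\mathbb P^f$, and substitution of the generalized cube recurrence to propagate the $g_{0,0,0}$ identity. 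One small caveat: your aside that order-independence of shuffling follows because ``distinct shufflable cubes act on disjoint local pieces'' is not quite the right reason (shufflable cubes may share vertices); the correct argument, which your induction already supplies, is that the end distribution is always $\mathbb P^C_{\mathcal I}$, a quantity defined only by $\mathcal I$ and hence independent of the sequence of shuffles used to reach it.
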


\begin{proof}
The proof is by induction on $|\mathcal{U}|$. If $\mathcal{U}=\emptyset$ then it is clear. Suppose $\mathcal{U}$ is not empty. Choose $(i,j,k)$ as in lemma \ref{lemmain}. We obtain the initial conditions $\mathcal{I}$ by shuffling the cube with vertex $(i,j,k)$ in $\mathcal{I}'$. We first show that 
$$
Z_{\mathcal{I}}=Z_{\mathcal{I}'}\Delta(i,j,k).
$$
Consider any $\mathcal{I}$ grove $G$. Since $(i-1,j-1,k-1)$ belongs to three rhombi, it has degree 3, 2 or 1. \\
Suppose $(i-1,j-1,k-1)$ has degree 1. Then $G$ belongs to a triple of $\mathcal{I}$ groves, say $\{G_1,G_2,G_3\}$ in the order shown in Figure \ref{gs} (a), all of which are obtained from a single $\mathcal{I'}$ grove $G'$ by shuffling. We have 
\begin{align*}
w(G_1)&=C_b(i,j-1,k)C_c(i,j,k-1)w(G'),\\
w(G_2)&=C_a(i-1,j,k)C_c(i,j,k-1)w(G'),\\
w(G_3)&=C_a(i-1,j,k)C_b(i,j-1,k)w(G').
\end{align*}
Therefore 
\begin{align*}&w(G_1)+w(G_2)+w(G_3)\\&=w(G').\Delta(i,j,k).
\end{align*}
\\
\smallskip
Suppose $(i-1,j-1,k-1)$ has degree 3. Then there are three $\mathcal{I}'$ groves, say $G_1$,$G_2$ and $G_3$ (in the order in Figure \ref{gs}(b)) that upon shuffling the cube at $(i,j,k)$ yields $G$. We have 
\begin{align*}
w(G)&=\frac{w(G_1)}{C_b(i,j,k)C_c(i,j,k)},\\
&=\frac{w(G_2)}{C_a(i,j,k)C_c(i,j,k)},\\
&=\frac{w(G_3)}{C_a(i,j,k)C_b(i,j,k)}.
\end{align*}
Therefore 
\begin{align*}
w(G)&=\frac{w(G_1)+w(G_2)+w(G_3)}{C_b(i,j,k)C_c(i,j,k)+C_a(i,j,k)C_c(i,j,k)+C_a(i,j,k)C_b(i,j,k)}\\&=(w(G_1)+w(G_2)+w(G_3))\Delta(i,j,k).
\end{align*}
Suppose $(i-1,j-1,k-1)$ has degree 2. Without loss of generality, we assume that we are in the first situation in figure $\ref{gs}(C)$. We have 
\begin{align*}
w(G)&=w(G')\frac{C_a(i,j,k-1)}{C_a(i,j,k)}\\
&=\frac{w(G')}{C_b(i,j,k)C_c(i,j,k)+C_a(i,j,k)C_c(i,j,k)+C_a(i,j,k)C_b(i,j,k)}\\&=w(G')\Delta(i,j,k).
\end{align*}
Since each of them is multiplied by the same factor, we have shown that
$$Z_{\mathcal{I}}=Z_{\mathcal{I}'}\Delta(i,j,k).$$

Now we will check that $\mathbb{P}^C=\mathbb{P}^f$. 
Suppose $(i-1,j-1,k-1)$ has degree 1 and let $\{G_1,G_2,G_3\}$ be the triple of groves obtained from a single $\mathcal{I'}$ grove $G'$ shown in Figure \ref{gs}(a). 
\begin{align*}
\mathbb{P}^f_{\mathcal{I}}(G)&=\frac{m_f(G_1)}{f_{0,0,0}}\\&=
\frac{m_f(G')}{f_{0,0,0}}\frac{f_{i-1,j,k}f_{i,j-1,k-1}}{f_{i-1,j-1,k-1}f_{i,j,k}}\\&=\mathbb{P}^f_{\mathcal{I'}}(G')\frac{f_{i-1,j,k}f_{i,j-1,k-1}}{f_{i-1,j-1,k-1}f_{i,j,k}}\\&=
\mathbb{P}^C_{\mathcal{I}'}C(G')\frac{f_{i-1,j,k}f_{i,j-1,k-1}}{f_{i-1,j-1,k-1}f_{i,j,k}}\\&=
\frac{w(G')}{Z_{\mathcal{I'}}}\frac{C_b(i,j-1,k)C_c(i,j,k-1)}{\Delta(i,j,k)}\\&=\frac{w(G)}{Z_{\mathcal I}}\\&=\mathbb{P}^C_{\mathcal{I}}(G),
\end{align*}
where we have used
$$
\frac{C_b(i,j-1,k)C_c(i,j,k-1)}{\Delta(i,j,k)}=\frac{f_{i-1,j,k}f_{i,j-1,k-1}}{f_{i-1,j-1,k-1}f_{i,j,k}},
$$
which may be checked by direct substitution. \\
Since each shuffle is independent, the probability of obtaining $G_1$ is $$\mathbb{P}^C_{\mathcal{I'}}(G').\frac{C_b(i,j-1,k)C_c(i,j,k-1)}{\Delta(i,j,k)}\\=\mathbb{P}^C_{\mathcal{I}}(G).$$
The last thing to check is that $g_{0,0,0}$ has the stated form. Let $g'_{0,0,0}$ be the expression obtained by solving the generalized cube recurrence on $\mathcal{I}'$. By induction hypothesis, 
$$
g'_{0,0,0}=\sum_{G' \in \mathcal{G}(\mathcal{I'})}\mathbb{P}^C_{\mathcal{I'}}(G')m_g(G'),
$$
and $g_{0,0,0}$ is obtained from $g'_{0,0,0}$ be substituting the generalized cube recurrence for $g_{i,j,k}$. We know that 
\begin{align*}
m_g(G_1)&=m_g(G')\frac{g_{i-1,j,k}g_{i,j-1,k-1}}{g_{i-1,j-1,k-1}g_{i,j,k}}\\
\mathbb{P}^C_{\mathcal{I}}(G_1)&=\mathbb{P}^C_{\mathcal{I'}}(G')\frac{C_b(i,j-1,k)C_c(i,j,k-1)}{\Delta(i,j,k)},
\end{align*}
and similarly for $G_2$ and $G_3$. Therefore we see that $\mathbb{P}^C_{\mathcal{I}}(G_1)m_g(G_1)+\mathbb{P}^C_{\mathcal{I}}(G_2)m_g(G_2)+\mathbb{P}^C_{\mathcal{I}}(G_3)m_g(G_3)$ is obtained from $\mathbb{P}^C_{\mathcal{I'}}(G')m_g(G')$ by substituting the generalized cube recurrence for $g_{i,j,k}$.\\

The argument when the degree of $(i-1,j-1,k-1)$ is $2$ and $3$ is similar.
\end{proof}

Let us denote by $\mathbb{E}_{\mathcal{I}}$ the expectation with respect to the measure $\mathbb{P}_{\mathcal{I}}$. We immediately obtain:
\begin{corollary}\label{cor1}
Let $(i_0,j_0,k_0) \in \mathcal{I}$.
$$
\mathbb{E}_{\mathcal{I}}\left[\text{Exponent of the variable }  g_{i_0,j_0,k_0} \text{ in }g_{0,0,0} \right] \nonumber =\frac{\partial g_{0,0,0}}{\partial g_{i_0,j_0,k_0}}\Biggr\rvert_{g\rvert_{\mathcal{I}}=1}.
$$
\end{corollary}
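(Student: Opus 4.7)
The proof is a direct application of Theorem~\ref{mainthm}, so the plan is short. The plan is to start from the identity
\[
g_{0,0,0} = \sum_{G \in \mathcal{G}(\mathcal{I})} \mathbb{P}^C_{\mathcal{I}}(G)\, m_g(G),
\]
which is the first bullet of Theorem~\ref{mainthm}, and recall that $m_g(G) = \prod_{(i,j,k) \in \mathcal{I}} g_{i,j,k}^{\deg_G(i,j,k)-2}$. In particular, the exponent of the variable $g_{i_0,j_0,k_0}$ in the monomial $m_g(G)$ is precisely $\deg_G(i_0,j_0,k_0) - 2$, and the coefficients $\mathbb{P}^C_{\mathcal{I}}(G)$ do not depend on the $g$-variables.

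Next, I would differentiate both sides with respect to $g_{i_0,j_0,k_0}$. Since each $m_g(G)$ is a monomial in the $g$-variables whose exponent at $g_{i_0,j_0,k_0}$ is $\deg_G(i_0,j_0,k_0)-2$, we obtain
\[
\frac{\partial g_{0,0,0}}{\partial g_{i_0,j_0,k_0}} = \sum_{G \in \mathcal{G}(\mathcal{I})} \mathbb{P}^C_{\mathcal{I}}(G)\bigl(\deg_G(i_0,j_0,k_0) - 2\bigr) \frac{m_g(G)}{g_{i_0,j_0,k_0}}.
\]
Setting $g_{i,j,k} = 1$ for all $(i,j,k) \in \mathcal{I}$ makes every monomial factor equal to $1$, leaving
\[
\frac{\partial g_{0,0,0}}{\partial g_{i_0,j_0,k_0}}\Biggr\rvert_{g\rvert_{\mathcal{I}}=1} = \sum_{G \in \mathcal{G}(\mathcal{I})} \mathbb{P}^C_{\mathcal{I}}(G)\bigl(\deg_G(i_0,j_0,k_0) - 2\bigr),
\]
which is by definition $\mathbb{E}_{\mathcal{I}}\bigl[\deg_G(i_0,j_0,k_0)-2\bigr] = \mathbb{E}_{\mathcal{I}}\bigl[\text{Exponent of } g_{i_0,j_0,k_0} \text{ in } g_{0,0,0}\bigr]$.

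There is essentially no obstacle: the only mild point to check is that differentiating term by term is legitimate, which is immediate because $\mathcal{G}(\mathcal{I})$ is a finite set for any $\mathcal{I} \in \mathfrak{I}$ (since $\mathcal{U}$ is finite and groves agree with the minimal one outside $\mathcal{U}$), so the sum defining $g_{0,0,0}$ is a finite sum of monomials with constant coefficients $\mathbb{P}^C_{\mathcal{I}}(G)$.
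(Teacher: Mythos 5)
Your proof is correct and is exactly the intended argument: the paper treats this corollary as an immediate consequence of the first bullet of Theorem~\ref{mainthm}, and you have simply made that observation explicit by differentiating the polynomial identity $g_{0,0,0}=\sum_{G}\mathbb{P}^C_{\mathcal{I}}(G)\,m_g(G)$ in the variable $g_{i_0,j_0,k_0}$ and evaluating at $g\rvert_{\mathcal{I}}=1$. The identification of the random variable ``exponent of $g_{i_0,j_0,k_0}$'' with $\deg_G(i_0,j_0,k_0)-2$ and the finiteness of $\mathcal{G}(\mathcal{I})$ are exactly the two points worth noting, and you note both.
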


\subsection{Creation rates} \label{4}
Let $(i,j,k) \in \mathbb{Z}_{\leq 0}$. Let $\mathcal{I}$ be a set of initial conditions such that $r_a(i,j,k) \subset \mathcal{I}$ and let $G$ have distribution $\mathbb{P}_{\mathcal{I}}$. Define the long edge probability 
$$p(i,j,k)=\mathbb{P}_{\mathcal{I}}(E_a(i,j,k) \in G).$$
These are well defined since if $\mathcal{I}'$ is another set of initial conditions, then we can use grove shuffling to move between $\mathcal{I}$ and $\mathcal{I}'$ leaving the rhombus  $r_a(i,j,k)$ intact. Similarly define
$$q(i,j,k)=\mathbb{P}_{\mathcal{I}}(E_b(i,j,k) \in G);$$
$$r(i,j,k)=\mathbb{P}_{\mathcal{I}}(E_c(i,j,k) \in G),$$
and the \textit{creation rates} 
$$
E(i,j,k)=1-p(i,j,k)-q(i,j,k)-r(i,j,k).
$$

It was shown in \cite{PS05} that 
\begin{align*}
E(i_0,j_0,k_0)&=\mathbb{E}_{\mathcal{I}}\left[\text{Exponent of the variable }  g_{i_0,j_0,k_0} \text{ in }g_{0,0,0} \right], \nonumber \\ \label{cr}
\end{align*}
and therefore by Corollary \ref{cor1},

\begin{equation}\label{exp}
E(i_0,j_0,k_0)=\frac{\partial g_{0,0,0}}{\partial g_{i_0,j_0,k_0}}\Biggr\rvert_{g\rvert_{\mathcal{I}}=1}.
\end{equation}

The following result  lets us obtain the generating function for $p(i,j,k)$ from that of $E(i,j,k)$. Let us introduce for convenience the notation:
\begin{align*}
U(i,j,k)&=\frac{C_b(i,j-1,k)C_c(i,j,k-1)}{\Delta(i,j,k)},\\
V(i,j,k)&=\frac{C_b(i,j-1,k)C_c(i,j,k-1)}{\Delta(i,j,k)},\\
W(i,j,k)&=\frac{C_b(i,j-1,k)C_c(i,j,k-1)}{\Delta(i,j,k)}.
\end{align*}
\begin{lemma}[(\cite{PS05} Theorem 2)]\label{gfr} The edge probabilities are given recursively by
$$p(i,j,k)=p(i+1,j,k)+(V(i+1,j,k)+W(i+1,j,k))E(i+1,j,k);$$
$$q(i,j,k)=q(i,j+1,k)+(U(i,j+1,k)+W(i,j+1,k))E(i,j+1,k);$$
$$r(i,j,k)=r(i,j,k+1)+(U(i,j,k+1)+V(i,j,k+1))E(i,j,k+1).$$
\end{lemma}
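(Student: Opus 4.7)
The plan is to generalize Petersen and Speyer's argument (\cite{PS05} Theorem 2) from the uniform grove measure to the $Y$-$\Delta$ consistent conductance setting, using the grove shuffling coupling from Theorem \ref{mainthm}. I describe the argument for the first identity; the analogous identities for $q$ and $r$ follow by cyclic symmetry in $(a, b, c)$.

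\textbf{Setup and shuffle coupling.} First I would pick initial conditions $\mathcal{I}'$ containing the top rhombus $r_a(i+1, j, k)$ such that the cube at $(i+1, j, k)$ is shufflable (Lemma \ref{lemmain}), and let $\mathcal{I}$ be the result of shuffling. Then the apex rhombus $r_a(i, j, k) \subset \mathcal{I}$, so that $p(i,j,k) = \mathbb{P}^C_{\mathcal{I}}(E_a(i,j,k) \in G)$ and $p(i+1,j,k) = \mathbb{P}^C_{\mathcal{I}'}(E_a(i+1,j,k) \in G')$, with $G'$ and $G$ coupled by the shuffle. By the case analysis in the proof of Theorem \ref{mainthm} and the observation that three long diagonals at either the top or apex of the cube would form a forbidden triangle in a spanning forest, the shuffle at this cube falls into three cases indexed by the local degree of apex $(i, j-1, k-1)$ in $G$: case (b), apex degree 3, with any of three $G'$s of top degree 1 collapsing to a single $G$; case (c), apex degree 2, with a single $G'$ of top degree 2 mapping deterministically to $G$ by a type-preserving rotation sending the long diagonal at top rhombus $r_q$ to the long diagonal at apex rhombus $r_q$; case (a), apex degree 1, with a single $G'$ of top degree 3 mapping randomly to one of three $G$s, the short edge landing at apex rhombus $r_a, r_b, r_c$ with conditional probabilities $U(i+1,j,k), V(i+1,j,k), W(i+1,j,k)$ respectively.

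\textbf{Computation and conclusion.} Let $B = \mathbb{P}^C_{\mathcal{I}'}(\text{top deg 3})$, $A = \mathbb{P}^C_{\mathcal{I}'}(\text{top deg 1})$, and $A_q = \mathbb{P}^C_{\mathcal{I}'}(\text{top deg 1, short at } r_q)$. The $Y$-$\Delta$ equations (\ref{ydeqns}) combined with the weight ratio $w(G_m)/w(G')$ for the three top-degree-1 sub-configurations give $A_a/A = U(i+1,j,k)$, $A_b/A = V(i+1,j,k)$, $A_c/A = W(i+1,j,k)$, so in particular $A_b + A_c = (V+W)(i+1,j,k) \cdot A$. Using case (c) type-preservation and case (a) randomization,
$$p(i,j,k) = \mathbb{P}^C_{\mathcal{I}'}(\text{top deg 2, long at } r_a) + (V+W)(i+1,j,k) \cdot B,$$
$$p(i+1,j,k) = \mathbb{P}^C_{\mathcal{I}'}(\text{top deg 2, long at } r_a) + A_b + A_c,$$
whose difference equals $(V+W)(i+1,j,k) \cdot (B - A)$. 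Summing the analogous expressions for $p, q, r$ gives $(p+q+r)(i+1,j,k) = \mathbb{P}^C_{\mathcal{I}'}(\text{top deg 2}) + 2A$; combined with the fact that $A + B + \mathbb{P}^C_{\mathcal{I}'}(\text{top deg 2}) = 1$ (as top degree 0 is excluded), one finds $E(i+1,j,k) = 1 - (p+q+r)(i+1,j,k) = B - A$, yielding the claimed recursion. The main obstacle is verifying the type-preservation of case (c) and the identifications $A_q/A \in \{U, V, W\}$; both reduce to direct weight-ratio computations as in the proof of Theorem \ref{mainthm}, using the $Y$-$\Delta$ relations (\ref{ydeqns}) to translate between top and apex conductances.
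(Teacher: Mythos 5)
The paper does not prove Lemma \ref{gfr}; it is cited directly from \cite{PS05} (Theorem 2), with the understanding that the Petersen--Speyer coupling argument carries over once the edge variables $a,b,c$ are re-expressed through the conductance function. Your proposal fills in exactly this translation, and the argument is correct.

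To confirm the two places you flagged as needing verification: for the ratio claim, given any fixed configuration outside the cube, the three top-degree-$1$ configurations $G_1', G_2', G_3'$ at the cube $(i+1,j,k)$ have weights in the ratio
\[
C_b(i{+}1,j,k)C_c(i{+}1,j,k) \; : \; C_a(i{+}1,j,k)C_c(i{+}1,j,k) \; : \; C_a(i{+}1,j,k)C_b(i{+}1,j,k),
\]
and the $Y$-$\Delta$ relations (\ref{ydeqns}) give $C_a(i,j,k)/C_a(i{+}1,j,k) = C_b(i{+}1,j{-}1,k)/C_b(i{+}1,j,k) = C_c(i{+}1,j,k{-}1)/C_c(i{+}1,j,k) = \Delta(i{+}1,j,k)$, so this ratio is a common factor $\Delta(i{+}1,j,k)^2$ away from $U(i{+}1,j,k):V(i{+}1,j,k):W(i{+}1,j,k)$; summing over the exterior configuration gives $A_q/A$ as claimed. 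For type-preservation in case (c), the partition function ratio is $Z_{\mathcal I}/Z_{\mathcal I'} = \Delta(i{+}1,j,k)$, and $w(G)/w(G') = C_q(\text{bottom rhombus})/C_q(\text{top rhombus})$ equals $\Delta(i{+}1,j,k)$ if and only if the bottom rhombus has the same type $q$ as the top one, again by (\ref{ydeqns}); any type-changing map would yield the wrong measure. (Incidentally, the paper's displayed definitions of $U,V,W$ in Section 2.4 all read identically --- a typo; you correctly read off the intended coefficients from the generalized cube recurrence. There is also a typo $C_a(i,j,k{-}1)$ for $C_a(i{-}1,j,k)$ in the degree-$2$ case of the proof of Theorem \ref{mainthm}, consistent with type-preservation.)

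The rest of the bookkeeping --- $p(i,j,k) - p(i{+}1,j,k) = (V{+}W)(B-A)$, and $E(i{+}1,j,k) = 1 - (p{+}q{+}r)(i{+}1,j,k) = (A+B+\mathbb{P}(\text{top deg }2)) - (\mathbb{P}(\text{top deg }2) + 2A) = B-A$ --- is correct. Since the paper offers no proof of its own, your argument is the one that should be compared to \cite{PS05}: it is essentially the same shuffle-coupling argument, carefully recast in terms of conductances.
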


\subsection{Creation rates generating functions}

Given a $Y-\Delta$ consistent conductance function $C$, for all $\mu=(i_0,j_0,k_0) \in \Z^3_{\leq 0}$, we define a conductance function $C^\mu$ by:
$$
C^\mu_q(i,j,k)=C_q(i+i_0,j+j_0,k+k_0).
$$
Let $g^\mu$ denote the corresponding solution to the generalized cube recurrence and $E^\mu,p^\mu,q^\mu,r^\mu$ the corresponding creation rates and edge probabilities. Let $F^\mu(x,y,z)=\sum_{i,j,k \geq 0} E^{\mu}(-i,-j,-k)x^i y^j z^k$ be the generating functions for the creation rates.
\begin{lemma}\label{dlem} 
Let $(i_1,j_1,k_1)$ and $(i_2,j_2,k_2)$ be such that $(i_1,j_1,k_1)$ is in the lower cone $C(i_2,j_2,k_2)$. Then 

$$
\frac{\partial g^{\mu}_{i_2,j_2,k_2}}{\partial g^{\mu}_{i_1,j_1,k_1}}\Biggr\rvert_{g^{\mu}\rvert_{\mathcal{I}}=1}=E^{\mu+(i_2,j_2,k_2)}(i_1-i_2,j_1-j_2,k_1-k_2).
$$

\end{lemma}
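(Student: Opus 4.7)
The plan is to use the translation invariance of the generalized cube recurrence to reduce the statement to equation (\ref{exp}). Set $\nu:=(i_2,j_2,k_2)$ and define $h_{i,j,k}:=g^{\mu}_{i+i_2,j+j_2,k+k_2}$ for $(i,j,k)\in\Z^3_{\leq 0}$. Because $C^{\mu+\nu}_q(i,j,k)=C^{\mu}_q(i+i_2,j+j_2,k+k_2)$ by the definition of the shifted conductance, $h$ satisfies the generalized cube recurrence with conductances $C^{\mu+\nu}$. In particular $h_{0,0,0}=g^{\mu}_{i_2,j_2,k_2}$ should be identifiable with $g^{\mu+\nu}_{0,0,0}$ computed on a suitable shifted set of initial conditions, after which equation (\ref{exp}) applied to $C^{\mu+\nu}$ will yield the creation rate on the right.

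The main step is making this identification precise. Because the cube recurrence expresses $g_{i,j,k}$ in terms of $g$ at points of its lower cone, a straightforward downward induction shows that the rational function $g^{\mu}_{i_2,j_2,k_2}\bigl(g^{\mu}\rvert_{\mathcal{I}}\bigr)$ depends only on those variables indexed by $\mathcal{I}\cap C(i_2,j_2,k_2)$. I would then let $\mathcal{I}':=(\mathcal{I}\cap C(i_2,j_2,k_2))-\nu\subset\Z^3_{\leq 0}$ and verify that $\mathcal{I}'$ is a valid set of initial conditions: its associated lower region $\mathcal{L}':=(\mathcal{L}\cap C(i_2,j_2,k_2))-\nu$ is downward closed in $\Z^3_{\leq 0}$ because $\mathcal{L}$ is, and its complement in $\Z^3_{\leq 0}$ is contained in $\mathcal{U}-\nu$ and is therefore finite. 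Under the index shift $(i,j,k)\mapsto(i-i_2,j-j_2,k-k_2)$, the recursion tree computing $g^{\mu}_{i_2,j_2,k_2}$ from $\mathcal{I}\cap C(i_2,j_2,k_2)$ maps bijectively onto the recursion tree computing $g^{\mu+\nu}_{0,0,0}$ from $\mathcal{I}'$, with matching coefficients thanks to the definition of $C^{\mu+\nu}$. Hence the two rational functions coincide under the identification $g^{\mu+\nu}_{a,b,c}\leftrightarrow g^{\mu}_{a+i_2,b+j_2,c+k_2}$, and differentiating with respect to $g^{\mu}_{i_1,j_1,k_1}$, which corresponds to $g^{\mu+\nu}_{i_1-i_2,j_1-j_2,k_1-k_2}$, gives
\[
\frac{\partial g^{\mu}_{i_2,j_2,k_2}}{\partial g^{\mu}_{i_1,j_1,k_1}}\Biggr\rvert_{g^{\mu}\rvert_{\mathcal{I}}=1}=\frac{\partial g^{\mu+\nu}_{0,0,0}}{\partial g^{\mu+\nu}_{i_1-i_2,j_1-j_2,k_1-k_2}}\Biggr\rvert_{g^{\mu+\nu}\rvert_{\mathcal{I}'}=1}.
\]

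Applying (\ref{exp}) to the conductance $C^{\mu+\nu}$ on the initial conditions $\mathcal{I}'$ identifies the right-hand side with $E^{\mu+\nu}(i_1-i_2,j_1-j_2,k_1-k_2)$, as desired; the independence of the creation rate from the choice of initial conditions containing the relevant rhombus (recalled in Section \ref{4}) ensures that this value is unambiguous. The main obstacle in this plan is the bookkeeping needed to match the unrolled cube recurrences on $\mathcal{I}$ and $\mathcal{I}'$ and to confirm that $\mathcal{I}'$ is a bona fide set of initial conditions; once this translation of variables is in place, the conclusion is an immediate application of (\ref{exp}).
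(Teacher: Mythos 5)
Your argument is correct and is the same translation argument that the paper uses; the paper compresses it into the single sentence ``Translate so that $(i_2,j_2,k_2)$ goes to $(0,0,0)$,'' while you have spelled out the verification that the shift carries the conductance $C^{\mu}$ to $C^{\mu+\nu}$, the initial conditions $\mathcal{I}$ to a valid shifted set $\mathcal{I}'$, and the recursion for $g^{\mu}_{i_2,j_2,k_2}$ to that for $g^{\mu+\nu}_{0,0,0}$, before invoking equation (\ref{exp}). The extra bookkeeping you flag as the main obstacle is exactly what the paper leaves implicit.
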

\begin{proof}
Translate so that $(i_2,j_2,k_2)$ goes to $(0,0,0)$.
\end{proof}

\begin{theorem}\label{genthem}
$F^\mu(x,y,z)$ satisfy the following infinite system of linear equations over $\mathbb{C}(x,y,z)$: 

\begin{align*}
&F^\mu+xyzF^{\mu+(-1,-1,-1)}-U^\mu(0,0,0)(xF^{\mu+(-1,0,0)}+yzF^{\mu+(0,-1,-1)})\\&-V^\mu(0,0,0)(yF^{\mu+(0,-1,0)}+xzF^{\mu+(-1,0,-1)})\\&-W^\mu(0,0,0)(zF^{\mu+(0,0,-1)}+xyF^{\mu+(-1,-1,0)}))=1,
\end{align*}
for all $\mu \in \Z_{\leq 0}$.
\end{theorem}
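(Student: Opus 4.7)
The plan is to differentiate the generalized cube recurrence at $(0,0,0)$ with respect to each initial variable $g^\mu_{-i,-j,-k}$, translate the resulting scalar equations into creation-rate identities via Lemma \ref{dlem}, and then package the whole family into one generating-function identity by summing against $x^iy^jz^k$.

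I would begin from the instance of the generalized cube recurrence at $(i,j,k)=(0,0,0)$ for the shifted conductance $C^\mu$, namely
$$g^\mu_{0,0,0}\,g^\mu_{-1,-1,-1}=U^\mu(0,0,0)g^\mu_{-1,0,0}g^\mu_{0,-1,-1}+V^\mu(0,0,0)g^\mu_{0,-1,0}g^\mu_{-1,0,-1}+W^\mu(0,0,0)g^\mu_{0,0,-1}g^\mu_{-1,-1,0}.$$
Choose any $\mathcal{I}$ with $(0,0,0)\in\mathcal{U}$ and with $(-i,-j,-k)\in\mathcal{I}$. Differentiating both sides with respect to $g^\mu_{-i,-j,-k}$ and evaluating at $g^\mu|_{\mathcal{I}}=1$, the product rule together with $g^\mu|_1=1$ yields
\begin{align*}
\frac{\partial g^\mu_{0,0,0}}{\partial g^\mu_{-i,-j,-k}}\bigg|_{1}+\frac{\partial g^\mu_{-1,-1,-1}}{\partial g^\mu_{-i,-j,-k}}\bigg|_{1}&=U^\mu(0,0,0)\bigg[\frac{\partial g^\mu_{-1,0,0}}{\partial g^\mu_{-i,-j,-k}}\bigg|_{1}+\frac{\partial g^\mu_{0,-1,-1}}{\partial g^\mu_{-i,-j,-k}}\bigg|_{1}\bigg]\\
&\quad+V^\mu(0,0,0)\bigg[\cdots\bigg]+W^\mu(0,0,0)\bigg[\cdots\bigg].
\end{align*}
By Lemma \ref{dlem}, each derivative equals a creation rate $E^{\mu+(i_2,j_2,k_2)}(-i-i_2,-j-j_2,-k-k_2)$ whenever $(-i,-j,-k)$ lies in the lower cone $C(i_2,j_2,k_2)$, and vanishes otherwise (because $g^\mu_{i_2,j_2,k_2}$, as produced by the recurrence, depends only on initial variables in its lower cone).

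Multiplying this equation by $x^iy^jz^k$ and summing over $(i,j,k)\in\Z^3_{\geq 0}\setminus\{(0,0,0)\}$, each term reorganizes into a generating function with an appropriate monomial prefactor via an index shift. For example,
$$\sum_{i,j,k\geq 1}E^{\mu+(-1,-1,-1)}\bigl(-(i-1),-(j-1),-(k-1)\bigr)x^iy^jz^k=xyz\,F^{\mu+(-1,-1,-1)}(x,y,z),$$
and the $\partial g^\mu_{-1,0,0}/\partial g^\mu_{-i,-j,-k}|_1$ sum contributes $xF^{\mu+(-1,0,0)}$; the five remaining summands on the right-hand side are entirely analogous. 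Out-of-cone indices contribute zero, so no spurious boundary terms appear.

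The delicate point is the missing $(i,j,k)=(0,0,0)$ term from the $F^\mu$ sum: restricting to $(i,j,k)\neq(0,0,0)$ produces $F^\mu-E^\mu(0,0,0)$ in place of $F^\mu$. To close the argument I would show that $E^\mu(0,0,0)=1$, which follows either formally from the interpretation of $E(0,0,0)$ as the expected exponent of $g^\mu_{0,0,0}$ in itself, or combinatorially from the fact that the grove connectivity conditions at the corner of $\mathcal{I}$ force the short diagonals of $r_a(0,0,0)$, $r_b(0,0,0)$, $r_c(0,0,0)$ to appear, so $p(0,0,0)=q(0,0,0)=r(0,0,0)=0$. Rearranging then yields the stated identity, with the constant $1$ on the right arising from exactly this boundary contribution. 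The main obstacle is precisely this bookkeeping: accounting for the missing $(0,0,0)$ term, checking that all derivatives outside their lower-cone support vanish, and confirming that the index shifts line up to reproduce the monomial factors that appear in the theorem.
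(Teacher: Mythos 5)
Your proof is correct and follows essentially the same route as the paper: differentiate the generalized cube recurrence, invoke Lemma~\ref{dlem} to turn partial derivatives into creation rates, and repackage as a generating-function identity. The one place you diverge is the boundary bookkeeping, and there your version is actually cleaner: the paper derives the $E$-relation only for $r,s,t<0$ and then asserts that ``the boundary terms cancel'' upon summing over $\Z^3_{\leq 0}$, whereas you observe that with the natural convention that out-of-cone derivatives vanish, the relation in fact holds at every $(r,s,t)\in\Z^3_{\leq 0}$ except $(0,0,0)$, so the only leftover term is $E^\mu(0,0,0)=1$, which is exactly the constant on the right-hand side; this makes explicit that the boundary terms do not so much cancel as conspire to produce the $1$.
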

\begin{proof}
Let $(i,j,k) \in \Z^3_{\leq 0}$ and $(i_0,j_0,k_0) \in C(i,j,k)$. Differentiating the generalized cube recurrence with respect to $g^\mu(i_0,j_0,k_0)$, setting $g^\mu|_\mathcal{I}=1$ and using lemma \ref{dlem}, we obtain
\begin{align*}
&E^{\mu+(i,j,k)}(i_0-i,j_0-j,k_0-k)+E^{\mu+(i-1,j-1,k-1)}(i_0-i+1,j_0-j+1,k_0-k+1)\\&=U^\mu(i,j,k)(E^{\mu+(i-1,j,k)}(i_0-i+1,j_0-j,k_0-k)\\&\quad +E^{\mu+(i,j-1,k-1)}(i_0-i,j_0-j+1,k_0-k+1))\\&
+V^\mu(i,j,k)(E^{\mu+(i,j-1,k)}(i_0-i,j_0-j+1,k_0-k)\\&\quad+E^{\mu+(i-1,j,k-1)}(i_0-i+1,j_0-j,k_0-k+1))\\&+W^\mu(i,j,k)(E^{\mu+(i,j,k-1)}(i_0-i,j_0-j,k_0-k+1)\\&\quad+E^{\mu+(i-1,j-1,k)}(i_0-i+1,j_0-j+1,k_0-k))).
\end{align*}
Letting $i_0-i=r,j_0-j=s, k_0-k=t$ and  relabeling $\mu+(i,j,k)$ as $\mu$, we have for all $r,s,t<0, \mu \in \Z_{\leq 0}$:
\begin{align}
&E^{\mu}(r,s,t)+E^{\mu+(-1,-1,-1)}(r+1,s+1,t+1) \nonumber\\&=U^\mu(0,0,0)(E^{\mu+(-1,0,0)}(r+1,s,t)+E^{\mu+(0,-1,-1)}(r,s+1,t+1))\nonumber\\&
+V^\mu(0,0,0)(E^{\mu+(0,-1,0)}(r,s+1,t)+E^{\mu+(-1,0,-1)}(r+1,s,t+1))\nonumber\\&+W^\mu(0,0,0)(E^{\mu+(0,0,-1)}(r,s,t+1)+E^{\mu+(-1,-1,0)}(r+1,s+1,t)).\label{eeqns}
\end{align}

Upon multiplying by $x^ry^sz^t$, summing up over all $(r,s,t) \in \mathbb{Z}^3_{\leq 0}$ and checking that the boundary terms cancel, we obtain the linear equations for $F^\mu$.
\end{proof}

\section{The resistor network model on a torus}
\subsection{Quotients of the triangular lattice}

\begin{figure}
\centering
\subcaptionbox{The fundamental domain with a conductance function.}
{\includegraphics[width=0.3\textwidth]{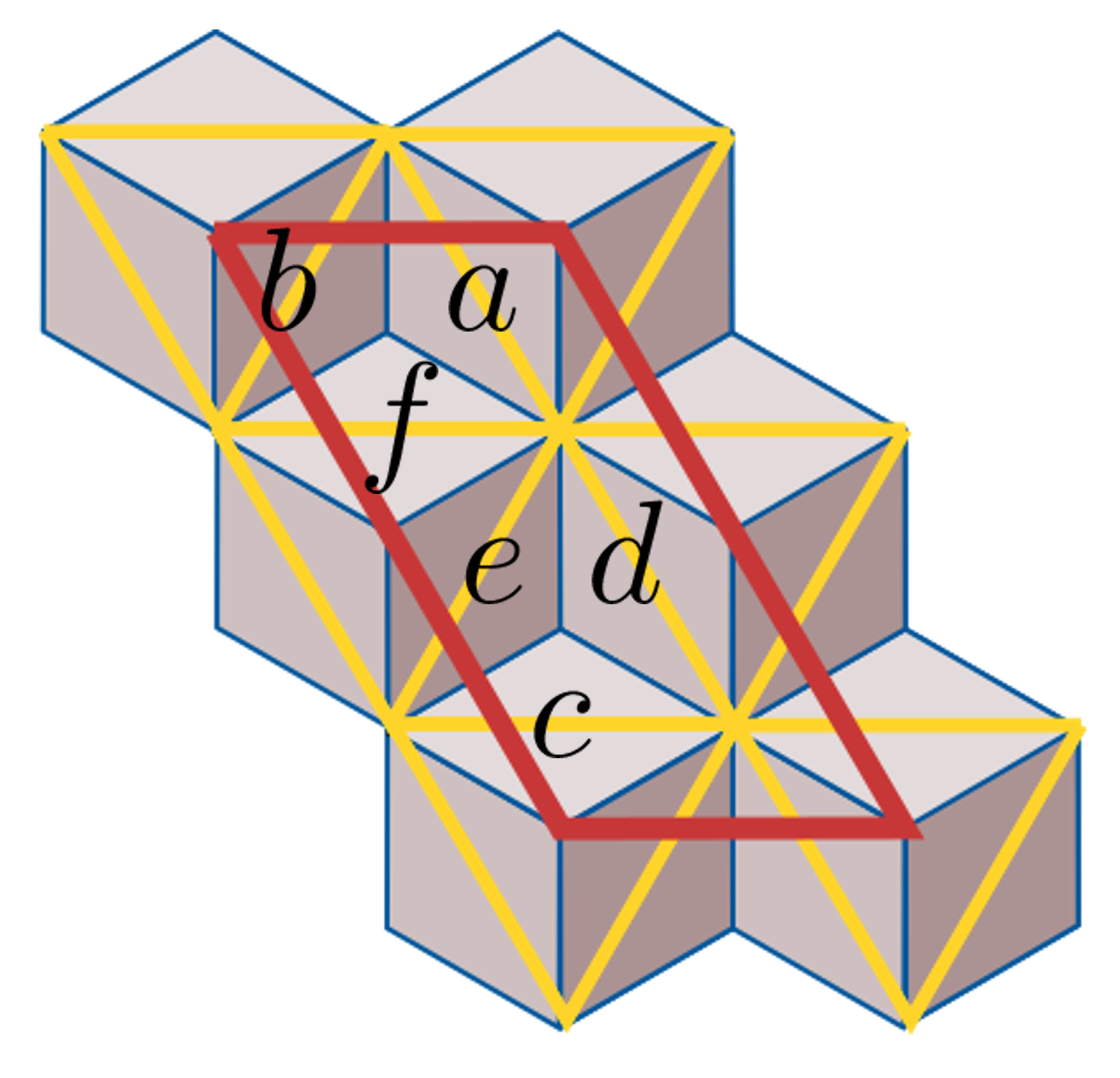}}
\subcaptionbox{Newton polygon.}
{\includegraphics[width=0.3\textwidth]{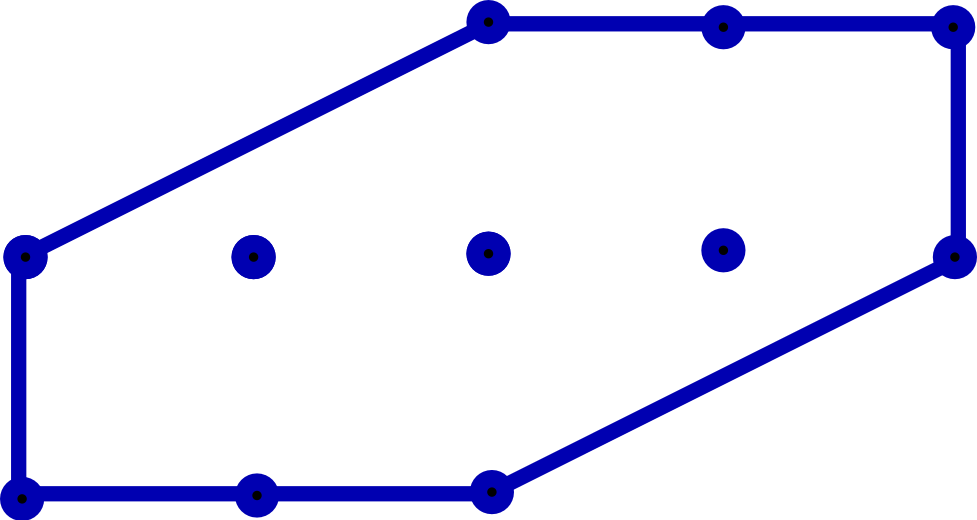}}
\caption{The graph $T_{1,2}$.}\label{fdt12}
\end{figure}

Consider the triangular lattice $T$ embedded in the plane $x+y+z=-1$ in $\mathbb{R}^3$ with vertices at $\{(i,j,k)\in \Z^3:i+j+k=0\}$. We have a $\Z^2$-action defined by 
\begin{align*}
(1,0).(i,j,k)&=(i-1,j+1,k),\\
(0,1).(i,j,k)&=(i,j+1,k-1).
\end{align*}
Let $T_{m,n}:=T/m\Z \times n\Z$ denote the quotient. It is a finite graph on a torus $\T$ with $mn$ vertices and forms an $m \times n$-cover of $T_{1,1}$. The parallelogram with vertices at $(0,0,0),(-m,m,0),(0,n,-n),(-m,m+n,-n)$ gives a fundamental domain for the torus. Figure \ref{fdt12} (a) shows the fundamental domain for $T_{1,2}$.

\subsection{The vector bundle Laplacian(\cite{K10})}\label{vbl}
Let $\Gamma$ be a finite graph on a torus embedded such that every face is a topological disk. Let $c$ be a conductance function on $\Gamma$ i.e. a real valued function on the edges of $\Gamma$ defined modulo global scaling. A pair $(\Gamma,c)$ is called a resistor network. A \textit{line bundle with connection} $(V,\phi)$ on  $\Gamma$ is the data of a complex line $V_v$ at each vertex of $\Gamma$ along with an isomorphism called \textit{parallel transport} $i_{v v'}:V_v \ra V_{v'}$ for each edge $\langle v,v' \rangle$ such that $i_{v' v}=i_{v v'}^{-1}$. Two line bundles with connection $(V,i)$ and $(V',i')$ are isomorphic if there exists isomorphisms $\psi_v:V_v \ra V'_v$ such that for all edges, the following diagram commutes.
\[
\begin{tikzcd}
V_v \arrow[r, "i_{v,v'}"] \arrow[d,"\psi_v"]
& V_{v'} \arrow[d, "\psi_{v'}" ] \\
V'_{v} \arrow[r, "i'_{v,v'}" ]
& V'_{v'}
\end{tikzcd}
\]
A connection is \textit{flat} if the monodromy around the faces of $\Gamma$ are trivial. The Laplacian is a linear operator $\Delta:\bigoplus_v V_v \ra \bigoplus_v V_v$ defined by
$$
\Delta(f)(v):= \sum_{v' \sim v}c(v,v')(f(v)-i_{v'v}f(v')).
$$

Suppose we have a flat connection with monodromy $z,w$ in the two homology directions of the torus. Then $P(z,w):=\text{det} \Delta(z,w)$ is a Laurent polynomial and is called the characteristic polynomial. The compactification of the curve $\{(z,w)\in (\C^*)^2:P(z,w)=0\}$ is called the spectral curve. The convex integral polygon 
$$
N=\text{Conv}\{(i,j)\in \Z^2: z^i w^j \text{ has non-zero coefficient in }P(z,w)\}
$$
is called the \textit{Newton polygon}. It is always centrally symmetric.\\
A zig-zag path on $\Gamma$ is an unoriented path that alternately turns maximally left or right at each vertex. Each zig-zag path gives rise to a pair of homology classes $\pm[\alpha] \in H_1(\T,\Z)$, where $[\alpha]$ is the homology of the path $\alpha$ equipped with an orientation. There is a unique centrally symmetric integral polygon $N(G)\subset H_1(\T,\Z) \cong \Z^2$ centered at the origin such that the sides of $N(G)$ are given by the vectors $\pm[\alpha]$.
\begin{lemma}\cite{GK12}
$N(G)$ coincides with the Newton polygon.
\end{lemma}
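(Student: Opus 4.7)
The strategy is to combine Forman's generalization of the Kirchhoff matrix-tree theorem with the geometric interpretation of zig-zag paths as maximally-winding cycles on $\Gamma$. First I would expand
$$P(z,w) = \det \Delta(z,w) = \sum_{F \in \mathrm{CRSF}(\Gamma)} \prod_{e \in F} c(e) \prod_{\gamma \in F} \bigl(2 - \mathrm{hol}(\gamma) - \mathrm{hol}(\gamma)^{-1}\bigr),$$
where the sum runs over cycle-rooted spanning forests of $\Gamma$ in which every cycle is non-null-homologous on $\T$, and $\mathrm{hol}(\gamma) = z^{[\gamma]_1} w^{[\gamma]_2}$ records the monodromy of the flat connection around $\gamma$. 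Distributing the products, each monomial $z^a w^b$ appearing in $P(z,w)$ has $(a,b) = \sum_{\gamma \in F} \varepsilon_\gamma [\gamma]$ for some CRSF $F$ and signs $\varepsilon_\gamma \in \{\pm 1\}$, so the Newton polygon equals the convex hull of all such signed sums.

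Next I would identify the extremal vertices of this convex hull. Fixing a generic linear functional $\ell$ on $H_1(\T,\R)$, the $\ell$-maximum of $\sum_\gamma \varepsilon_\gamma [\gamma]$ over CRSFs and sign choices is obtained by choosing $\varepsilon_\gamma = \mathrm{sgn}\,\ell([\gamma])$ and then maximizing $\sum_\gamma |\ell([\gamma])|$ over all families of pairwise disjoint nontrivial simple cycles that extend to a CRSF. The claim one needs is that the maximizing families are precisely the maximal packings of pairwise disjoint zig-zag paths $\alpha$ with $\ell([\alpha])>0$, the remaining edges being filled in by an acyclic spanning forest. Given this, the total homology class of such a maximizing family is $\sum_{\ell([\alpha])>0}[\alpha]$, which by the definition of $N(G)$ is the extremal vertex of $N(G)$ in the direction of $\ell$. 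Since this holds for every generic $\ell$, the two polygons have identical vertex sets, hence coincide.

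The chief obstacle is the extremality assertion: that zig-zag paths are exactly the cycles achieving the largest winding-per-length ratio in their direction, and that they pack disjointly to saturate the bound. I would argue this using the train-track interpretation of zig-zag paths: the strands defined by zig-zag paths lift to bi-infinite geodesics in the universal cover, partitioning it into strips of uniformly bounded width. Any simple cycle $\gamma$ in $\Gamma$ must cross a transverse zig-zag strand at least $|[\gamma]\wedge [\alpha]|$ times, and since each such crossing consumes an edge, this gives an upper bound on $\ell([\gamma])$ in terms of the number of edges of $\gamma$ available between crossings; summing over disjoint cycles bounds the total $\ell$-weight by that of a zig-zag packing, with equality iff every cycle is itself a zig-zag. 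Once the extremality claim is in place, the identification of the two polygons is immediate from the setup above, finishing the proof.
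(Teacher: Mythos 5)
The paper itself offers no proof of this lemma---it is cited directly from \cite{GK12}---so there is no ``paper's proof'' to compare line by line. Your overall strategy (Kenyon's CRSF expansion of $\det\Delta(z,w)$ plus a geometric extremality argument for zig-zag paths) is in the right spirit, and the expansion you write down and the reduction of the Newton polygon to a convex hull of signed homology sums are correct steps.

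However, the central extremality assertion contains a genuine gap. You claim the $\ell$-maximizing CRSFs are ``maximal packings of pairwise disjoint zig-zag paths $\alpha$ with $\ell([\alpha])>0$, the remaining edges being filled in by an acyclic spanning forest'', and that the resulting total homology class is $\sum_{\ell([\alpha])>0}[\alpha]$. There are two problems. First, on a torus any collection of pairwise disjoint simple non-contractible cycles is parallel: all lie in a single primitive homology direction $(a,b)$. Zig-zag paths with $\ell([\alpha])>0$ but distinct homology classes necessarily intersect, so they cannot all be packed disjointly into one CRSF; the homology of a maximizing CRSF is of the form $k(a,b)$, not a sum over several distinct zig-zag classes. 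Second, even when the homology classes work out, the extremal CRSF cycles need not themselves be zig-zag paths. Already for $T_{1,1}$ the extremal CRSFs consist of a single straight loop using one of the three edge directions, while zig-zag paths of the triangular lattice are wiggly paths alternating between two edge directions---these are different cycles. The correct content of the lemma is an equality of homology data (vertices of the two polygons), not an identification of the underlying cycles, and your train-track sketch does not establish this. The argument in \cite{GK12} instead proceeds via the generalized Temperley correspondence: the resistor-network characteristic polynomial is matched (up to a simple factor) with the Kasteleyn determinant of the associated bipartite dimer model on $G_\Gamma$, for which the identification of the Newton polygon with the zig-zag polygon is already established; to make your direct CRSF argument rigorous you would need a different extremality lemma, roughly that $\max_F k_F\,\ell(a_F,b_F)$ over CRSFs agrees with $\tfrac12\sum_\alpha |\ell([\alpha])|$, and this does not follow from the packing picture as written.
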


For the graphs $T_{m,n}$, we choose the connection as follows:
If an edge crosses the side $(0,0,0),(-m,m,0)$ of the fundamental parallelogram, we multiply by a factor of $w$. If an edge crosses the side $(-m,m,0),(-m,m+n,-n)$, we multiply by $z$. The Laplacian may then be represented by a matrix with entries in $\C[z^{\pm 1},w^{\pm 1}]$. The Newton polygon of $T_{m,n}$ is a hexagon with vertices at 
$$
(\pm n,0), (0,\pm m),(n,m),(-n,-m).
$$

For $T_{1,2}$ with conductance function as shown in Figure \ref{fdt12} (a), the Laplacian is 

\begin{equation*}
\Delta(z,w)=\begin{pmatrix}
a+b+d+e+f(2-z-\frac{1}{z}) & -aw-bzw-d-\frac{e}{z}\\
-\frac{a}{w}-\frac{b}{zw}-d-ez & a+b+d+e+c(2-z-\frac{1}{z}) 
\end{pmatrix},
\end{equation*}

and the Newton polygon is the hexagon in Figure \ref{fdt12} (b).

\subsection{Templerley's bijection}

\begin{figure}
\centering
\includegraphics[width=0.3\textwidth]{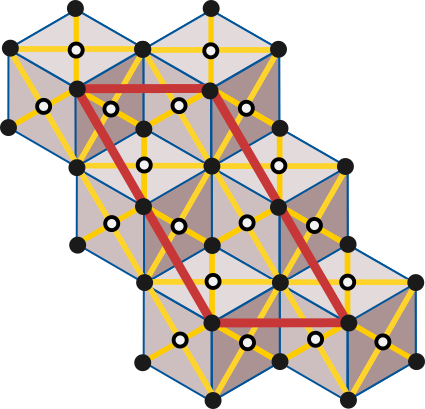}
\caption{Generalized Temperley's bijection for $T_{1,2}$.}\label{temperleys}
\end{figure}

Given a resistor network $\Gamma$ embedded on a torus $\T$, the generalized Temperley's trick \cite{KPW00} gives a bipartite graph $G_\Gamma$ on $\T$ as follows:\\
Superimpose $\Gamma$ and its dual graph, declare the vertices of $\Gamma$ and its dual black and put a white vertex at intersections of the edges of $\Gamma$ and its dual. For $T_{1,2}$, the resulting bipartite graph is shown in Figure {\ref{temperleys}}.

\subsection{Cluster Poisson variety associated to the resistor network model(\cite{GK12})}
The moduli space of line bundles with connection on $G_\Gamma$ modulo isomorphisms is denoted $\mathcal{L}_{G_\Gamma}$. Let $\hat{G_\Gamma}$ be the conjugate surface graph obtained by reversing the cyclic order of edges at each white vertex. The Poisson structure on $\mathcal{O}(\mathcal{L}_{G_\Gamma})$ is defined to be the canonical  Poisson structure on $\mathcal{O}(\mathcal{L}_{\hat{G_\Gamma}})$ coming from the intersection pairing on $\hat{G_\Gamma}$ under the natural isomorphism 

$$
\mathcal{L}_{G_\Gamma} \cong \mathcal{L}_{\hat{G_\Gamma}}.
$$

The monodromies $W_F$ around the faces of $\Gamma$ along with the monodromies around generators of $H_1(\T^2,\Z)$ form a coordinate system on $\mathcal{L}_{\Gamma}$, subject to the single relation $\prod_{F}W_F=1$.\\

A conductance function $c$ on $\Gamma$ determines $V(c)\in \mathcal{L}_{G_\Gamma}$  as follows:\\
The fiber over each vertex is identified with $\C$. The connection is defined to be the identity map if the edge comes from the incidence of a face and edge of $\Gamma$. If the edge of $G_\Gamma$ goes from a vertex of $\Gamma$ to the mid point of an edge $E$ in $\Gamma$, then the connection is defined to be $* \mapsto * \times c_E$. The moduli space of line bundles with connections arising from conductance functions forms a subvariety $\mathcal{R}_{\Gamma} \subset \mathcal{L}_{G_\Gamma}$.

A graph $\Gamma$ is\textit{ minimal} if in the universal cover, the lifts of any two zig-zag paths intersect atmost once and the lift of any zig-zag path has no self intersections.
\begin{theorem}\cite{GK12}
Any two minimal graphs with the same Newton polgygon are related by $Y-\Delta$ moves up to taking the dual graph.
\end{theorem}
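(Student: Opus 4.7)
The plan is to reduce the statement to a classification of minimal bipartite graphs on the torus via the generalized Temperley bijection of the previous subsection. Given a resistor network $\Gamma$, the bipartite graph $G_\Gamma$ has zig-zag paths in natural bijection with the zig-zag paths of $\Gamma$ together with those of its dual $\Gamma^*$, and minimality of $\Gamma$ is equivalent to minimality of $G_\Gamma$ in the Postnikov sense. Since, by the previous lemma applied to $G_\Gamma$, the Newton polygon of a minimal bipartite graph is determined by the multiset of zig-zag homology classes, two minimal resistor networks $\Gamma_1,\Gamma_2$ with the same Newton polygon produce bipartite graphs $G_{\Gamma_1}, G_{\Gamma_2}$ with the same zig-zag data, up to the symmetry swapping the roles of primal and dual. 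This swap is precisely the source of the \emph{up to taking the dual graph} clause in the statement.

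Next I would invoke the torus analogue of Postnikov's theorem: any two minimal bipartite graphs on the torus with the same multiset of zig-zag homology classes are related by a sequence of square (spider) moves, together with trivial bivalent contraction/expansion moves that do not change the equivalence class. This is the key input, due to Goncharov--Kenyon, extending Postnikov's disk classification to the torus setting. The final step is a local verification that transports moves across Temperley's bijection: a square face of $G_\Gamma$ corresponds either to a trivalent vertex of $\Gamma$ (a local $Y$) or to a triangular face of $\Gamma$ (a local $\Delta$), and the spider move at such a face is exactly the $Y-\Delta$ transformation recorded in equations \eqref{ydeqns} after one matches the edge weights on the two sides of the bijection. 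Hence a sequence of square moves relating $G_{\Gamma_1}$ and $G_{\Gamma_2}$ lifts to a sequence of $Y-\Delta$ moves relating $\Gamma_1$ and $\Gamma_2$ (possibly after dualizing one of them), completing the argument.

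The main obstacle is the torus classification of minimal bipartite graphs, which is considerably more subtle than the disk case. On the disk one reduces any minimal graph to a canonical plabic form, but on the torus no such canonical form exists: different reduction paths genuinely produce different minimal representatives. The proof must therefore argue directly that the strand (zig-zag) diagrams of two minimal representatives in the universal cover are related by an ambient isotopy respecting the homology data, and then realize this isotopy by a finite sequence of square moves. Care is also needed when zig-zag homology classes are repeated, or when the Newton polygon has short sides, to ensure that one does not accidentally invoke moves outside the $Y-\Delta$ class. A secondary, but routine, technical point is verifying that the Temperley correspondence between conductance functions and bipartite edge weights is compatible with the $Y-\Delta$ equations \eqref{ydeqns} in the precise form needed to match square moves on the nose.
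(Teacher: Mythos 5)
The paper states this result as a citation to \cite{GK12} and gives no in-paper proof, so there is no argument in the text to compare against; the argument in the cited literature (see also Kenyon \cite{K12}) works directly with the medial graph of $\Gamma$, where a $Y-\Delta$ move is a Reidemeister-III move on a triple-crossing strand diagram, and connectedness is established at the level of strand diagrams with prescribed homology data, without passing through bipartite graphs.

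Your plan to reduce to the bipartite classification via Temperley's bijection is a genuinely different route, but it has a real gap rather than a routine one. First, in the Temperley graph $G_\Gamma$ as defined in the paper every white vertex has degree $4$, so the spider move, which requires a square face with all four vertices trivalent, does not literally apply; a single $Y-\Delta$ move on $\Gamma$ corresponds to a \emph{composition} of spider and bivalent contraction/expansion moves on $G_\Gamma$ after first resplitting the degree-$4$ white vertices, and verifying the weight dictionary through that composition is not the same as ``the spider move is exactly the $Y-\Delta$ transformation.'' Second, and more seriously, the sequence of spider moves produced by the bipartite classification theorem to connect $G_{\Gamma_1}$ and $G_{\Gamma_2}$ may pass through minimal bipartite graphs that are not of Temperley type, i.e.\ not equal to $G_{\Gamma'}$ for any resistor network $\Gamma'$; such steps have no $Y-\Delta$ interpretation. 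You would have to show that the sequence can always be rerouted to stay inside the Temperley subclass, and this is not a local check --- it is essentially the theorem in disguise, which is why the cited proof avoids the detour through bipartite graphs. Finally, the asserted bijection of zig-zag paths of $G_\Gamma$ with ``those of $\Gamma$ together with those of $\Gamma^*$'' needs care: $\Gamma$ and $\Gamma^*$ share the same medial strands, so the relation between $N(G_\Gamma)$ and $N(\Gamma)$ is a doubling rather than an identification, and one must pin this down before concluding that equal Newton polygons for $\Gamma_1,\Gamma_2$ yield equal Newton polygons for $G_{\Gamma_1},G_{\Gamma_2}$.
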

 A $Y-\Delta$ transformation $\Gamma \ra \
\Gamma'$ induces a birational isomorphism 
$$
\mu_{Y-\Delta}:\mathcal{R}_{\Gamma} \dashrightarrow \mathcal{R}_{\Gamma'}.
$$

Gluing the $\mathcal{R}_{\Gamma}$ with Newton polygon $N$ using these birational maps gives the \textit{cluster Poisson variety of the resistor network model} $\mathcal{R}_N$. 

\subsection{Cluster modular transformations}\label{cmtsection}

\begin{figure}
\centering
\includegraphics[width=1\textwidth]{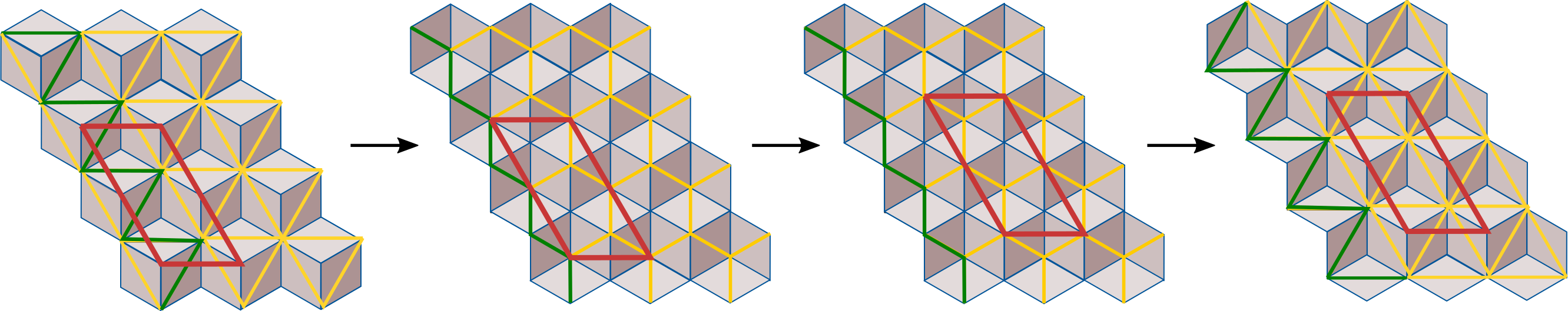}
\caption{The cluster modular transformation $T$ on $T_{1,2}$. We have colored in green a zig-zag path with homology $(0,1)$ to show that it goes around the torus. Zig-zag paths with other homology classes have no net displacement. The first step is a $Y-\Delta$ move at each downward triangle, the second step is a translation of the entire graph by the vector $(-\frac{\sqrt{3}}{2},-\frac{1}{2})$  and the third step is taking the dual graph.}\label{cmt}
\end{figure}

A birational automorphism of $\mathcal{R}_N$ induced by a sequence of $Y-\Delta$ moves taking $\Gamma$ to itself up to taking the dual graph is called a \textit{cluster modular transformation}. The group of cluster modular transformations is called the \textit{cluster modular group}(see \cite{GK12} section 6.2).\\

Cluster modular transformations are easier to describe in terms of zig-zag paths. A $Y-\Delta$ move is induced by moving a zig-zag path across the crossing of two other zig-zag paths. For each centrally symmetric pair of edges $E,E'$ of $N$, we have zig-zag paths $(\alpha_i)_{i=1}^k$ in cyclic order in the fundamental domain of the torus with homology class given by the vector of the edge $E$ up to orientation. By minimality, these paths do not intersect. Isotope them cyclically around the torus in the direction specified by the outward normal(out from $N$) to the edge vector of $E$, so that $(\alpha_1,...,\alpha_k)\ra(\alpha_2,...,\alpha_{k},\alpha_{1})$ or  $(\alpha_1,...,\alpha_k)\ra(\alpha_k,\alpha_{1},\alpha_2,...,\alpha_{k-1})$, leaving the other strands unchanged. This induces a sequence of $Y-\Delta$ moves corresponding to moving $\alpha_i$ through simple crossings of two zig-zag paths, which transforms $\Gamma$ back to itself. The composition of the birational maps induced by these $Y-\Delta$ moves gives a cluster modular transformation $T_{E}$. Note that $T_{E'}=T_E^{-1}$.\\

We are interested in the cluster modular transformation $T:=T_{\langle(-n,0),(-n,-m)\rangle}$ on the graph $T_{m,n}$. In the case of $T_{1,2}$, this cluster modular transformation is illustrated in Figure $\ref{cmt}$.
\subsection{Ergodic Gibbs measures}\label{egm}
Let $\tilde{\Gamma}$ be the lift of $\Gamma$ to the universal cover of the torus. An \textit{essential spanning forest}(ESF) on $\tilde{\Gamma}$ is a spanning forest whose every component is infinite. Kenyon proved the following classification for ergodic Gibbs measures(EGMs) on ESFs on $\tilde{\Gamma}$, extending the results in \cite{KOS06} for the dimer model to groves.
\begin{theorem}[\cite{K17}]
For each centrally symmetric pair $((s,t),(-s,-t)) \in N(P)$, there exists a unique EGM on ESFs of $\tilde{\Gamma}$ with components having average density $(s,t)$ in the two coordinate directions.  
\end{theorem}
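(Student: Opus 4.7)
The plan is to bootstrap the statement from the analogous classification of ergodic Gibbs measures for the bipartite dimer model due to Kenyon, Okounkov and Sheffield \cite{KOS06}, transported across the generalized Temperley bijection $\Gamma \mapsto G_\Gamma$ introduced in the previous subsection. Because Temperley's bijection identifies (rooted) spanning forests on $\tilde\Gamma$ with dimer covers of $\widetilde{G_\Gamma}$ (after removing one black vertex) and preserves translation invariance of measures, one gets a canonical correspondence between shift-invariant probability measures on ESFs of $\tilde\Gamma$ and shift-invariant Gibbs measures on dimer covers of $\widetilde{G_\Gamma}$ with the appropriate asymptotic height function. Under this correspondence ergodicity is preserved, and the slope $(s,t) \in N(G_\Gamma)$ of the height function on the dimer side translates to the average density $(s,t)$ of the forest components on the $\tilde\Gamma$ side; this uses the identification $N(G_\Gamma) = N(\Gamma)$ which already underlies the lemma of \cite{GK12} cited above.

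With the correspondence in place, I would proceed in three steps. First, for each lattice point $(s,t)$ in the interior of $N(\Gamma)$, the Kenyon--Okounkov--Sheffield construction produces an ergodic Gibbs measure on dimers of $\widetilde{G_\Gamma}$ of slope $(s,t)$ as a Fourier-type integral whose integrand is the inverse Kasteleyn matrix evaluated at a point $(z_0,w_0)$ on the spectral curve selected by the Legendre dual of the Ronkin function; existence on the boundary of $N(\Gamma)$ is handled by the limiting frozen/semi-frozen constructions from \cite{KOS06}. Pushing forward through Temperley gives an EGM of component density $(s,t)$ on ESFs of $\tilde\Gamma$. Second, for uniqueness, I would transport the variational/maximum-entropy characterization of \cite{KOS06}: any ergodic Gibbs measure on ESFs of $\tilde\Gamma$ pushes forward to an ergodic Gibbs measure on dimers with the matching slope, and the dimer EGM of a given slope is unique, so the forest EGM is too. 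Third, one must check that the central symmetry condition on the pair $\{(s,t),(-s,-t)\}$ matches the symmetry in $N(\Gamma)$ — this is automatic because Newton polygons of resistor networks (equivalently, of the associated bipartite graphs) are centrally symmetric, as recorded earlier in Section~\ref{vbl}.

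The main obstacle is verifying that Temperley's bijection behaves well at the level of ergodic Gibbs measures, in particular that (i) removing a black root on the dimer side does not destroy translation invariance in the infinite-volume limit (this is handled by the standard argument that the root vertex can be placed at infinity along a generic direction, as in Kenyon's spanning tree paper), and (ii) the slope/density identification is exact, not merely up to a lower-order correction. Once these are in place, the remainder is a direct transcription of \cite{KOS06}. Since the statement is cited from \cite{K17}, where precisely this transport of the dimer classification to ESFs on periodic planar graphs is carried out, I would close the argument by appealing to that reference for the technical details of the Temperley transfer and the boundary-slope cases, rather than reproving them here.
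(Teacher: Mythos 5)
The paper offers no proof of this theorem: it is cited verbatim from Kenyon \cite{K17}, so there is no internal argument to compare against. Your sketch---transport the \cite{KOS06} classification through the generalized Temperley bijection---is a natural idea, but it is not how \cite{K17} actually proceeds. Kenyon's argument works directly with the Laplacian: the ESF measures are constructed as determinantal point processes whose kernel is a transfer-current operator built from the inverse of the magnetically gauged Laplacian evaluated on the spectral curve, and uniqueness for a given slope is established by a variational/entropy argument parallel to, but not derived by reduction from, the dimer case. The theorem is proved in analogy with \cite{KOS06}, not by pushing dimer EGMs forward through Temperley.

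Beyond the mismatch with the reference's actual route, the Temperley route as you lay it out has gaps that you flag but do not close, and they are not cosmetic. Temperley's map on a torus requires deleting a root vertex, which destroys the $\Z^2$-action; the usual remedy (sending the root to infinity in the planar limit) gives a wired or free tree measure, not a bijection at the level of shift-invariant Gibbs measures, so ``preserves translation invariance'' is doing a lot of unjustified work. Likewise, the Gibbs property for dimers and the Gibbs property for ESFs are defined with respect to different local conditionings, and the identification of the dimer height slope on $G_\Gamma$ with the forest component density on $\Gamma$ requires matching the Newton polygon of the dimer characteristic polynomial of $G_\Gamma$ with that of $\det\Delta_\Gamma$, which is a factorization statement, not a tautology. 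Finally, closing the argument by ``appealing to \cite{K17} for the technical details'' is circular when \cite{K17} is precisely the theorem under discussion; that is the right move for a citation---and indeed is exactly what the paper does---but it is not a proof.
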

An ergodic Gibbs measure is in the solid phase if some edge correlation is deterministic. It is in the liquid phase if the edge correlations decay quadratically and gaseous if the decay is exponential. The solid phases correspond to boundary lattice points of the Newton polygon and the gaseous phases to the interior lattice points unless the corresponding compact oval in $P(z,w)$ degenerates to a real node, in which case it is liquid. This always happens for the central point, which corresponds to the UST measure.\\
For $T_{1,2}$, the Newton polygon is in Figure {\ref{fdt12}} (b). Therefore, there are four EGMs in the solid phase and one EGM in the gaseous phase. By analogy with dimer limit shapes(see \cite{KOS06}, \cite{KO07}), we expect to see macroscopic regions where the local statistics are described by each of the solid and gaseous EGMs in a generic limit shape.

\section{Edge probability generating functions}

Start with a conductance function $C^t$ on the $T_{m,n}$ that is $N$-periodic under the cluster modular transformation $T$. We construct a $Y-\Delta$ consistent conductance function on $\Z^3$ as follows:\\
Fix a scale factor for $C^t$ and define $C|_{\{i+j+k=-1\}}=C^t$. Extend to all of $\Z^3$ using the $Y-\Delta$ transformation.\\

From $T^NC^t=C^t$ up to scaling, we have for all $k$,
\begin{align*}
U^{\mu+k(-N,0,0)}&=U^\mu,\\
V^{\mu+k(-N,0,0)}&=V^\mu,\\
W^{\mu+k(-N,0,0)}&=W^\mu,
\end{align*}
which implies that 
$$
F^{\mu+k(-N,0,0)}=F^\mu.
$$
Moreover, since $C^t$ comes from $T_{m,n}$, we also obtain for all $k \in \Z$,
\begin{align*}
C^{\mu+k(-m,m,0)}&=C^\mu,\\
C^{\mu+k(0,0,n)}&=C^\mu,
\end{align*}
from which we get,
\begin{align*}F^{\mu+k(-m,m,0)}&=F^\mu,\\
F^{\mu+k(0,0,n)}&=F^\mu.
\end{align*}
Let us introduce an equivalence relation $\sim$ on $\Z^3$: For all $k$,
\begin{align*}
\mu &\sim \mu+k(-N,0,0),\\
\mu &\sim \mu+k(-m,m,0),\\
\mu & \sim \mu+k(0,0,n).
\end{align*}
$\mathcal{M}:=\Z^3/\sim$ parameterizes the distinct $F^\mu$. The infinite linear system of equations in Theorem \ref{genthem} reduces to a finite linear system and so we obtain a matrix $A=(A_{[\mu],[\nu]})$ for $[\mu],[\nu] \in \mathcal{M}$, such that the linear system may be written as
$$
A(F^{[\mu]})_{{[\mu]} \in \mathcal{M}}=\mathbf{1},
$$
where $\mathbf{1}$ is the constant vector of $1$s.\\
Let 
\begin{align*}
G^\mu_p(x,y,z)&=\sum_{i,j,k \geq 0} p^{\mu}(-i,-j,-k)x^i y^j z^k,\\
G^\mu_q(x,y,z)&=\sum_{i,j,k \geq 0} q^{\mu}(-i,-j,-k)x^i y^j z^k,\\
G^\mu_r(x,y,z)&=\sum_{i,j,k \geq 0} r^{\mu}(-i,-j,-k)x^i y^j z^k,
\end{align*}
be the generating functions for edge probabilities. 
\begin{theorem}\label{pgenthm}
The edge probability generating functions satisfy the following linear system of equations:
\begin{align}
A \left(G^{[\mu]}_p\right)_{{[\mu]} \in \mathcal{M}}&=\frac{x}{1-x}(Q^{[\mu]}(0,0,0)+R^{[\mu]}(0,0,0))_{{[\mu]} \in \mathcal{M}},\nonumber\\
A \left(G^{[\mu]}_p\right)_{{[\mu]} \in \mathcal{M}}&=\frac{y}{1-y}(P^{[\mu]}(0,0,0)+R^{[\mu]}(0,0,0))_{{[\mu]} \in \mathcal{M}},\nonumber\\
A \left(G^{[\mu]}_p\right)_{{[\mu]} \in \mathcal{M}}&=\frac{z}{1-z}(P^{[\mu]}(0,0,0)+Q^{[\mu]}(0,0,0))_{{[\mu]} \in \mathcal{M}}.\label{pgen}
\end{align}
\end{theorem}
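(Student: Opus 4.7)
The plan is to apply the linear operator $A$ from Theorem \ref{genthem} directly to each $G_p^\mu$ and read off the identity coefficient by coefficient. Using the translation equivariance $p^{\mu+\nu}(i,j,k) = p^\mu(\nu_1+i,\nu_2+j,\nu_3+k)$, which follows from $C^\mu_q(i,j,k) = C_q(i+\mu_1,j+\mu_2,k+\mu_3)$, each shifted generating function $G_p^{\mu+\nu}$ becomes a translate of $G_p^\mu$ together with boundary correction terms supported where the shift pushes an index out of the non-negative orthant.

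Collecting the coefficient of $x^r y^s z^t$ in $A G_p^\mu$, all interior terms with $r,s,t\geq 1$ and all codimension-one face terms (those with exactly one of $r,s,t$ equal to zero) cancel because $U^\mu(0,0,0) + V^\mu(0,0,0) + W^\mu(0,0,0) = 1$; these three quantities are the three branch probabilities of the cube shuffle at $(0,0,0)$ appearing in the proof of Theorem \ref{mainthm}, and they sum to unity by the definition of $\Delta(0,0,0)$. Consequently the only surviving contributions to $A G_p^\mu$ come from the three coordinate axes and the origin, and a direct coefficient computation identifies them as $(V^\mu+W^\mu)(0,0,0)\,p^\mu(-r,0,0)\,x^r$ on the $i$-axis, $(U^\mu+W^\mu)(0,0,0)\,p^\mu(0,-s,0)\,y^s$ on the $j$-axis, $(U^\mu+V^\mu)(0,0,0)\,p^\mu(0,0,-t)\,z^t$ on the $k$-axis, together with a constant term equal to $p^\mu(0,0,0)$.

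To evaluate these axis boundary values I would apply the same coefficient extraction to the identity $A F^\mu = 1$ from Theorem \ref{genthem}, which yields the boundary conditions $E^\mu(0,0,0) = 1$ and $E^\mu(-r,0,0) = E^\mu(0,-s,0) = E^\mu(0,0,-t) = 0$ for $r,s,t \geq 1$. The identity $p+q+r+E = 1$ then forces $p^\mu(0,0,0)=0$, and iterating the $i$-direction recursion of Lemma \ref{gfr} along the $i$-axis (using the vanishing of $E^\mu$ there) shows $p^\mu(-r,0,0)$ is independent of $r \geq 1$, with a single step of the recursion pinning down the constant value encoded on the right as $(Q^\mu + R^\mu)(0,0,0)/(V^\mu+W^\mu)(0,0,0)$. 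The vanishing of $p^\mu$ along the $j$- and $k$-axes cannot be extracted from Lemma \ref{gfr} alone, since that lemma contains no recursion for $p$ propagating in the $j$- or $k$-direction; I would instead obtain it by a direct grove-combinatorial argument showing that the long diagonal $E_a$ at positions $(0,-s,0)$ and $(0,0,-t)$ is excluded by the connectivity conditions at the boundary faces of the initial condition set. Substituting these boundary values into the coefficient formula from the previous paragraph assembles the stated identity $A G_p^\mu = \frac{x}{1-x}(Q^\mu+R^\mu)(0,0,0)$; the $G_q^\mu$ and $G_r^\mu$ identities follow by cyclically permuting the three coordinate directions, and the reduction from the infinite system to the finite linear system indexed by $\mathcal{M}$ is automatic from the $\sim$-periodicity of the generating functions. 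The main obstacle is the combinatorial boundary analysis for the $j$- and $k$-axes: the axis along which the recursion does propagate is handled purely algebraically, but the remaining two require a delicate check of how boundary rhombi interact with the grove connectivity prescription.
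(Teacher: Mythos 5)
Your proposal contains a genuine gap: the claimed translation equivariance $p^{\mu+\nu}(i,j,k) = p^\mu(\nu_1+i,\nu_2+j,\nu_3+k)$ is false, and the remainder of your argument rests on it. The conductance function does translate as you state, but the grove model lives on $\mathbb{Z}^3_{\leq 0}$, whose boundary at the coordinate planes breaks translation invariance. The quantities $p^{\mu+\nu}$, like the $E^{\mu+\nu}$ in Lemma \ref{dlem}, are edge probabilities for the sub-cone problem rooted at $\nu$ (where translation carries $C(\nu)$ back to $\mathbb{Z}^3_{\leq 0}$ at the cost of reshifting the conductance), and this is \emph{not} the same as evaluating $p^\mu$ at a shifted argument, because the sub-cone has its own boundary rhombi. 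To see the claim fail concretely, take $C$ constant: then $p^\mu = p^{(0,0,0)}$ for every $\mu$, so your equivariance would force $p^{(0,0,0)}(i,j,k) = p^{(0,0,0)}(i+\nu_1,j+\nu_2,k+\nu_3)$ for all $\nu \in \mathbb{Z}^3_{\leq 0}$, i.e.\ $p$ constant in position — contradicting the arctic circle theorem.

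Once the equivariance is removed, the proposed cancellations collapse. The coefficient of $x^ry^sz^t$ ($r,s,t\geq 1$) in $AG_p^\mu$ is
$p^\mu(-r,-s,-t) + p^{\mu+(-1,-1,-1)}(-r+1,-s+1,-t+1) - U^\mu(\cdots) - V^\mu(\cdots) - W^\mu(\cdots)$,
and these eight terms involve genuinely different sub-cone probabilities that do not collapse onto a single $p^\mu$ value, so $U^\mu+V^\mu+W^\mu=1$ alone proves nothing. The same flaw invalidates your deduction $E^\mu(-r,0,0)=0$ from $AF^\mu=1$: extracting the $x^r$-coefficient of $AF^\mu$ without the false equivariance gives $E^\mu(-r,0,0) - U^\mu(0,0,0)\,E^{\mu+(-1,0,0)}(-r+1,0,0) = 0$, which for $r=1$ yields $E^\mu(-1,0,0) = U^\mu(0,0,0)\neq 0$, so the $i$-axis creation rates do \emph{not} vanish and $p^\mu(-r,0,0)$ is not constant in $r$. (In fact, iterating gives $p^\mu(-r,0,0)\to 1$ as $r\to\infty$, consistent with the frozen corner of the limit shape.)

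The paper's proof takes a different route that avoids needing any equivariance of the probabilities. It introduces the difference $\alpha^{\mu}(i,j,k) = p^{\mu}(i-1,j,k) - p^{\mu}(i,j,k)$, which by Lemma \ref{gfr} equals $(V^{\mu}(i,j,k)+W^{\mu}(i,j,k))E^{\mu}(i,j,k)$. The deterministic prefactor $(V^{\mu}+W^{\mu})$ \emph{is} translation equivariant (it is a ratio of conductances), and every $E$-term appearing in the genuine identity (\ref{eeqns}) — which is derived by differentiating the cube recurrence, not from equivariance — carries the \emph{same} value of this prefactor. Multiplying through, $\alpha$ satisfies the same recursion as $E$, giving $A H^{\mu} = (Q^{\mu}+R^{\mu})(0,0,0)$ for the generating function $H$ of $\alpha$. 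Then $G_p^{\mu} = \frac{x}{1-x}H^{\mu}$ up to a boundary series $\sum_{j,k\geq 0} p^{\mu}(0,-j,-k)y^jz^k$, which vanishes because boundary rhombi in the plane $i=0$ never carry a long diagonal. (You correctly identified this last boundary vanishing as a combinatorial input, but the "pure algebra" you proposed for the $i$-axis does not survive once the equivariance is withdrawn.) If you want to salvage a direct coefficient-extraction argument, you would need to replace the false equivariance with the actual content of (\ref{eeqns}) and the $\alpha = (V+W)E$ relation — which is exactly the paper's proof.
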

\begin{proof}
We will derive the first equation, the other two may be derived in the same way. Let $\alpha^{[\mu]}(i,j,k)=p^{[\mu]}(i-1,j,k)-p^{[\mu]}(i,j,k)$. By Lemma \ref{gfr}, $\alpha^{[\mu]}(i,j,k)=(V^{[\mu]}(i,j,k)+W^{[\mu]}(i,j,k))E^{[\mu]}(i,j,k)$. 
We have
\begin{align*}&\alpha^{[\mu-v]}(r+v,s+v,t+v)\\&=(V^{[\mu-v]}(r+v,s+v,t+v)+W^{[\mu-v]}(r+v,s+v,t+v))E^{[\mu-v]}(r+v,s+v,t+v)\\&=(V^{[\mu]}(r,s,t)+W^{[\mu]}(r,s,t))E^{[\mu-v]}(r+v,s+v,t+v).
\end{align*}
In particular, we observe that the factor $(V^{[\mu]}(r,s,t)+W^{[\mu]}(r,s,t))$ does not depend on $v$. Therefore from equation (\ref{eeqns}), we obtain
\begin{align*}
&\alpha^{[\mu]}(r,s,t)+\alpha^{[\mu+(-1,-1,-1)]}(r+1,s+1,t+1) \nonumber\\&=U^\mu(0,0,0)(\alpha^{[\mu+(-1,0,0)]}(r+1,s,t)+\alpha^{[\mu+(0,-1,-1)]}(r,s+1,t+1))\nonumber\\&
+V^\mu(0,0,0)(\alpha^{[\mu+(0,-1,0)]}(r,s+1,t)+\alpha^{[\mu+(-1,0,-1)]}(r+1,s,t+1))\nonumber\\&+W^\mu(0,0,0)(\alpha^{[\mu+(0,0,-1)]}(r,s,t+1)+\alpha^{[\mu+(-1,-1,0)]}(r+1,s+1,t)).
\end{align*}

Therefore the generating functions $H^{[\mu]}(x,y,z)=\sum_{i,j,k \geq 0} \alpha^{[\mu]}(-i,-j,-k)x^i y^j z^k$ satisfy the linear system of equations,
$$
A (H^{[\mu]})_{{[\mu]} \in \mathcal{M}}=(Q^{[\mu]}(0,0,0)+R^{[\mu]}(0,0,0))_{{[\mu]} \in \mathcal{M}}.
$$
From $\alpha^{[\mu]}(i,j,k)=p^{[\mu]}(i-1,j,k)-p^{[\mu]}(i,j,k)$, we have 
$$
G^{[\mu]}_p(x,y,z)=\frac{x}{1-x}H^{[\mu]}(x,y,z)+\sum_{(0,j,k)\in \Z^3_{\geq 0}}p^{[\mu]}(0,-j,-k)y^jz^k.
$$
Observe that for all $j,k \geq 0$, $p^{[\mu]}(0,-j,-k)=0$ and therefore we get $$\sum_{(0,j,k)\in \Z^3_{\geq 0}}p^{[\mu]}(0,-j,-k)y^jz^k=0.$$ 

\end{proof}

\section{Arctic curves}

Following the theory of asymptotics of multivariate generating functions developed in (\cite{PW02},\cite{PW04},\cite{BP11} and \cite{PW13}), we compute asymptotic edge probabilities.

Solving the linear system (\ref{pgen}), we obtain 
$$
G^{(0,0,0)}_p(x,y,z)=\frac{x}{1-x}\frac{P_p(x,y,z)}{Q(x,y,z)},
$$
where $P_p$ and $Q$ are polynomials and $Q=\text{det}(A)$. Note that the matrix $A$ is always singular at $x=1,y=1,z=1$ because the sum of the columns of $A$ vanishes. We denote by $\tilde{P}$ and $\tilde{Q}$ the homogeneous parts of these polynomials at the singular point $(1,1,1)$.

We are interested in the behavior of the coefficients $p^{[(0,0,0)]}(-i,-j,-k)$ of $G^{[(0,0,0)]}(x,y,z)$ for $(i,j,k)$ large i.e. we are interested in computing the limit,
$$
p(\bm{\hat{r}})=\lim_{\frac{(-i,-j,-k)}{\sqrt{i^2+j^2+k^2}} \ra \bm{\hat{r}}}p^{[(0,0,0)]}(-i,-j,-k),
$$
for $\bm{\hat{r}}\in \mathbb{R}^3_{\leq 0}$ satisfying $|\bm{\hat{r}}|=1$. 

For a homogeneous polynomial $f(x,y,z)$ in three variables, let $Z(f)$ be the plane curve $\{P \in \mathbb{P}^2_{\C}:f(P)=0\}$ and let $C(f)\subset \C^3$ be the affine cone over $Z(f)$. 
 The dual cone to $C(f)$ is denoted $C^\vee(f)$ and is equal to $C(f^\vee)$ where by $f^\vee$ we mean the projective dual of $f$, which may be computed by setting $z=-ux-vy$ in $f(x,y,z)$ and eliminating $x$ and $y$ from the simultaneous equations,
$$ 
f=0 ,\frac{\partial{f}}{\partial x}=0,
\frac{\partial{f}}{\partial y}=0.
$$.

The computation of asymptotic edge probabilities leads to explicit expressions for arctic curves.
We consider simplified groves on standard initial conditions of order $n$, so that they are supported on an equilateral triangle in the plane $i+j+k=-n$ with vertices at $(-n,0,0),(0,-n,0)$ and $(0,0,-n)$. We rescale so that the vertices are now at $(-1,0,0),(0,-1,0)$ and $(0,0,-1)$, obtaining an equilateral triangle $\nabla$ in the plane $i+j+k=-1$. For $n$ large, we observe macroscopic regions in the triangle with different qualitative behavior(see Figures \ref{et},\ref{unifgrove} and \ref{t12n1}). The arctic curve is the boundary separating the macroscopic regions in different phases.

\subsection{$T_{1,1}$}
\begin{figure}
\centering
\subcaptionbox{Simulation of a uniformly random grove on $\mathcal{I}(100)$.}
{\includegraphics[width=0.4\textwidth]{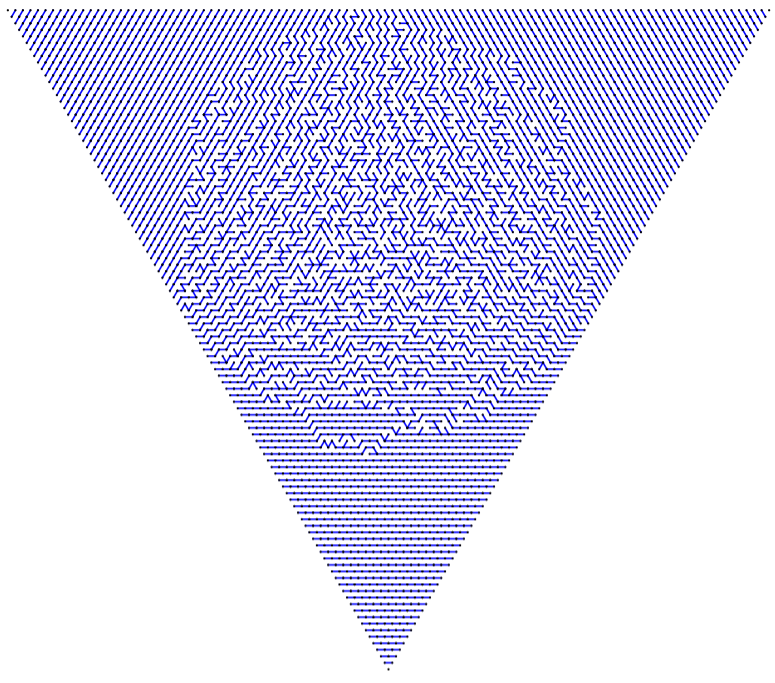}}
\subcaptionbox{The arctic circle.}
{\includegraphics[width=0.467\textwidth]{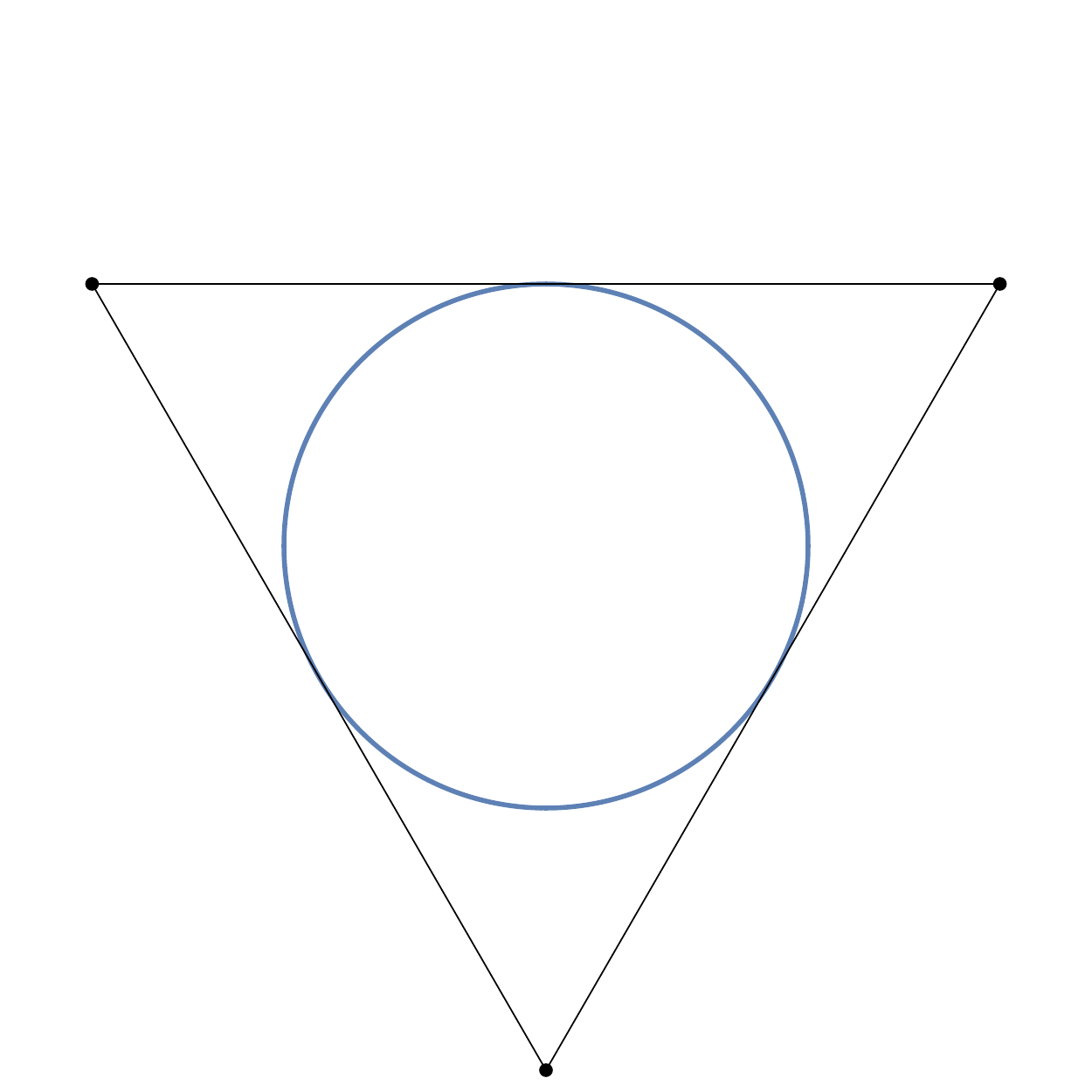}}
\caption{Uniform groves on $T_{1,1}$.}\label{unifgrove}
\end{figure}

On $T_{1,1}$, $N=1$ is forced. Let us take the conductance function on $T_{1,1}$ to be the constant function $1$. This gives rise to the uniform probability measure on groves. See Figure \ref{unifgrove} for a simulation of a random (simplified)grove on standard initial conditions of order 100. Equation (\ref{pgen}) gives
$$
G^{[(0,0,0)]}_p(x,y,z)=\frac{2x}{3(1-x)}\frac{1}{1+xyz-\frac{1}{3}(x+y+z+yz+xz+xy)}.
$$
Here $P_p(x,y,z)=\frac{2}{3}$ and $Q(x,y,z)=1+xyz-\frac{1}{3}(x+y+z+yz+xz+xy)$. The homogeneous parts at the singular point $(1,1,1)$ are
\begin{align}
\tilde{P_p}(x,y,z)&=\frac{2}{3},\nonumber \\
\tilde{Q}(x,y,z)&=\frac{2}{3} (yz+xz+xy).\label{pq1}
\end{align}
The dual curve is $\tilde{Q}^\vee(u,v,w)=vw+uw+uv-\frac{1}{2}(u^2+v^2+w^2)$. Let $K$ be the region bounded by the cone $C(\tilde{Q}^\vee)$.
\begin{theorem}[The (weak) arctic circle theorem, \cite{PS05}] \label{wact}
$p(-i,-j,-k) \ra 0$ exponentially fast outside $\text{convex-hull}(K \cup \{(u,v,w)\in \mathbb{R}^3:v=w=0\})$.
\end{theorem}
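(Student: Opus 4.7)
The plan is to extract $p(-i,-j,-k)$ via Cauchy's formula applied to
\[
G^{[(0,0,0)]}_p(x,y,z) = \frac{2x}{3(1-x)}\cdot\frac{1}{Q(x,y,z)},
\]
and then to invoke the multivariate saddle-point machinery of \cite{PW02,PW04,PW13,BP11}. Exponential decay of $p(-i,-j,-k)$ in a direction $\bm{\hat r}$ will be established by deforming the Cauchy torus past the singular variety
\[
\mathcal{V} \;=\; \{x=1\}\;\cup\;\{Q(x,y,z)=0\}
\]
to a polyradius at which the factor $x^{-i-1}y^{-j-1}z^{-k-1}$ in the integrand is exponentially small. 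The content of the theorem is that such a deformation is possible precisely when $\bm{\hat r}\notin\mathrm{conv}(K\cup\{v=w=0\})$.

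The first step is to locate the obstructions on the positive real part of $\mathcal{V}$. Setting $a=1-x$, $b=1-y$, $c=1-z$ turns $Q$ into $\tfrac{2}{3}(ab+ac+bc)-abc$, which is strictly positive on $(0,1)^3$ (since $2(ab+ac+bc)-3abc>0$ follows from $1/a+1/b+1/c>3$ on the cube); hence $\{Q=0\}\cap[0,1]^3$ is entirely boundary, and the only real critical point of the logarithmic height map on $\{Q=0\}$ visible from the interior of the unit polydisc is the corner $(1,1,1)$, where both components of $\mathcal{V}$ meet. The smooth hyperplane $\{x=1\}$ depends only on $x$, and considered alone obstructs deformation only along the $x$-axis, contributing the ray $\{v=w=0\}$ to the infeasible cone of directions.

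The hardest step will be the local analysis at the non-smooth point $(1,1,1)$, where the two components of $\mathcal{V}$ cross and the singular structure is not transverse. In the coordinates $(a,b,c)$ the leading behaviour of $G^{[(0,0,0)]}_p$ is
\[
\frac{2/3}{a\,\tilde{Q}(a,b,c)}, \qquad \tilde{Q}(a,b,c)=\tfrac{2}{3}(ab+ac+bc),
\]
a product of a simple smooth factor and a homogeneous quadratic cone, giving a total singularity of order $3$. This falls outside the quadratic class explicitly treated in~\cite{BP11}, but as remarked in the introduction the Baryshnikov--Pemantle cone-point contour-deformation technique still applies in this concrete instance; what I need to extract is the qualitative statement that the cone of directions blocked by this critical point is the convex hull of the duals of its two factors, namely the axial ray $\{v=w=0\}$ coming from $a$ together with the cone $K=C^{\vee}(\tilde{Q})=C(\tilde{Q}^{\vee})$ coming from $\tilde{Q}$, as given by~\eqref{pq1}.

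Finally, for $\bm{\hat r}\notin\mathrm{conv}(K\cup\{v=w=0\})$, a hyperplane-separation argument in the space of log-polyradii produces a polyradius $(\rho_1,\rho_2,\rho_3)$ to which the Cauchy torus can be deformed without crossing $\mathcal{V}$ and for which the resulting bound $|p(-i,-j,-k)|\le C\,\rho_1^{-i}\rho_2^{-j}\rho_3^{-k}$ decays exponentially in the direction $\bm{\hat r}$, completing the proof.
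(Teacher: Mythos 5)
The paper does not prove Theorem \ref{wact}: it is imported verbatim from \cite{PS05}, which established it (in 2005, predating \cite{BP11}) by more direct estimation of the generating function. Your proposal, by contrast, is a modern re-derivation through the Pemantle--Wilson / Baryshnikov--Pemantle cone-point framework. That is a legitimate and genuinely different route, and your preparatory calculations are sound: the substitution $a=1-x$, $b=1-y$, $c=1-z$ does give $Q = \tfrac{2}{3}(ab+ac+bc) - abc$, and the positivity of $Q$ on $(0,1)^3$ follows exactly as you argue, so $G^{[(0,0,0)]}_p$ is analytic on the open unit polydisc and the minimal torus is $T(1,1,1)$.

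The gap is in your passage from ``no zeros in the open cube'' to ``$(1,1,1)$ is the only relevant critical point.'' The singular variety $\{(1-x)Q=0\}$ meets the \emph{boundary} torus $\{|x|=|y|=|z|=1\}$ in a set much larger than $\{(1,1,1)\}$: one computes $Q(1,y,z) = \tfrac{2}{3}(1-y)(1-z)$, so $\{Q=0\}$ contains the complex lines $\{x=y=1\}$, $\{x=z=1\}$ (and by symmetry $\{y=z=1\}$), whose intersection with the torus gives whole circles of minimal singular points; moreover, all of the $2$-torus $\{x=1\}\cap T(1,1,1)$ is in the singular variety via the $(1-x)$ factor. This also means $\{x=1\}$ and $\{Q=0\}$ are \emph{not} in general position at $(1,1,1)$: in the $(a,b,c)$ chart they share the two lines $\{a=b=0\}$, $\{a=c=0\}$, so the tangent cone $a\cdot\tfrac{2}{3}(ab+ac+bc)$ is a plane tangent to the Lorentz cone along two lines, not transverse to it. Your sketch needs (i) a check that no real direction $\bm{\hat{r}}$ with $j,k>0$ has a log-critical point on those extra circles (this is true --- at $(1,1,z_0)$, $z_0\neq 1$, the log-gradient of $Q$ is proportional to $(z_0-1)(1,1,0)$, which is not a real positive direction --- but you do not say it), and (ii) a justification that the dual cone of the hyperbolicity cone of the degenerate cubic $a\,\tilde{Q}(a,b,c)$ is precisely $\mathrm{conv}(K\cup\{v=w=0\})$. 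Point (ii) is ``the hardest step'' as you say, and deferring it to an extension of \cite{BP11} is consistent with what the paper itself does elsewhere; but the non-generic tangency described above is exactly the place where a blanket appeal to the quadratic-point theory of \cite{BP11} could fail, so a careful proof must address it explicitly rather than by analogy.
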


Let us denote by $P(\bm{\hat{r}})$ the point in $\nabla$ obtained by intersecting the line in the direction $\bm{\hat{r}}$ with the plane $u+v+w=-1$. $\bm{\hat{r}} \mapsto P(\bm{\hat{r}})$ is clearly a bijection. Let $C^\vee$ be the curve inscribed in $\nabla$ obtained by the intersection of $C(\tilde{Q}^\vee)$ with $u+v+w=-1$. Observe that for a point $P(\bm{\hat{r}})$ outside the region bounded by $C^\vee$, there are two (real) tangents through $P(\bm{\hat{r}})$ to $C^\vee$ while from a point inside $C^\vee$, there are no (real) tangents to $C^\vee$. What is happening is that as we approach the boundary of $C^\vee$ from the outside, the two real tangents merge into a pair of complex conjugate tangents. Under projective duality,this pair of complex conjugate tangents gives us two complex conjugate points $t_1,t_2$ on $Z(\tilde{Q})$, where we assume $t_1$ has positive imaginary part. \\
\begin{theorem}[\cite{BP11}]For $(i,j,k)\in \Z^3$ large such that for $$\bm{\hat{r}}=\frac{(-i,-j,-k)}{\sqrt{i^2+j^2+k^2}}, 
$$
$P(\bm{\hat{r}})$ is in the interior of $C^\vee$, we have 
$$
p(-i,-j,-k)=\frac{1}{2 \pi i}\int_{\delta(\bm{\hat{r}})} \omega + O\left(\frac{1}{\sqrt{i^2+j^2+k^2}}\right),
$$
where in the affine coordinates $X=\frac{x}{z},Y=\frac{y}{z}$, $\omega$ is the meromorphic 1-form\\
\begin{equation}\omega=\frac{\tilde{P_p}(X,Y,1)dX}{X \frac{\partial \tilde{Q}(X,Y,1)}{\partial Y}}.\label{omega}\end{equation} The chain of integration $\delta(\bm{\hat{r}})$ is a simple path from $t_1$ to $t_2$ passing through the arc between $[0:1:0]$ and $[0:0:1]$ containing $[1:0:0]$ on the real part of $Z(\tilde{Q})$. In particular, we have $$
p(\bm{\hat{r}})=\frac{1}{2 \pi i}\int_{\delta(\bm{\hat{r}})} \omega.
$$
\end{theorem}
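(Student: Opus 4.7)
The plan is to apply the cone-singularity asymptotic theorem of Baryshnikov--Pemantle--Wilson directly to the rational generating function $G^{[(0,0,0)]}_p = \frac{xP_p}{(1-x)Q}$. First I would write
\be
p(-i,-j,-k) \;=\; \frac{1}{(2\pi i)^3}\oint\oint\oint \frac{xP_p(x,y,z)}{(1-x)Q(x,y,z)}\,\frac{dx\, dy\, dz}{x^{i+1}y^{j+1}z^{k+1}}
\ee
over a small polytorus $|x|=|y|=|z|=\veps$ inside the domain of convergence. As $(i,j,k)\to\infty$ along a fixed direction $\bm{\hat r}$, this integral is dominated by a critical point on the singular variety $\{(1-x)Q=0\}$. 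By (\ref{pq1}), $\tilde Q$ is homogeneous of degree $2$ and $Z(\tilde Q)\subset\P^2$ is a smooth conic, while the smooth component $\{1-x=0\}$ crosses the point $(1,1,1)$ transversally, so $(1,1,1)$ is precisely a cone point of the type treated in [BP11].

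Second, I would identify the geometric meaning of the hypothesis $P(\bm{\hat r})\in\mathrm{int}(C^\vee)$. The criticality condition of [BP11] says that the contribution at the cone point is governed by pairs of critical points on $Z(\tilde Q)$, which are the projective duals of the tangents from $P(\bm{\hat r})$ to the dual conic $C^\vee\subset\nabla$. Inside $C^\vee$ these tangents are complex conjugate, and so are the critical points $\{t_1,t_2\}\subset Z(\tilde Q)$; this is the ``hyperbolic'' or ``oscillatory'' regime of [BP11], in which the asymptotic is of order $O(1)$ with an $O(1/\sqrt{i^2+j^2+k^2})$ error rather than exponentially small as in the elliptic regime. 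The residue reduction then passes from the triple integral to a one-dimensional integral of a meromorphic $1$-form along a chain in $Z(\tilde Q)$: changing to local coordinates at $(1,1,1)$, approximating $Q$ by $\tilde Q$ and $P_p$ by $\tilde P_p$, and taking the Poincar\'e residue along $\tilde Q=0$ converts the integrand, in the affine chart $X=x/z,\,Y=y/z$, into
\[
\frac{\tilde P_p(X,Y,1)\, dX}{X\,\partial_Y\tilde Q(X,Y,1)},
\]
with the factor $1/X$ accounting for the transverse smooth pole $\{1-x=0\}$ and $1/\partial_Y\tilde Q$ the standard Poincar\'e residue in the $Y$ direction. This is precisely $\omega$.

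The final step, which I expect to be the main obstacle, is pinning down the chain $\delta(\bm{\hat r})$. Topologically, shrinking the Cauchy torus toward the cone point while avoiding the singular set deforms it to a one-cycle in the complement of the poles of $\omega$ on $Z(\tilde Q)$ with endpoints at the complex-conjugate critical points $t_1$ and $t_2$. To verify the prescription stated in the theorem---the simple path from $t_1$ to $t_2$ passing through the arc of $Z(\tilde Q)(\R)$ between $[0:1:0]$ and $[0:0:1]$ that contains $[1:0:0]$---requires a careful Morse-theoretic tracking of the steepest-descent deformation of the original torus across both the smooth sheet $\{1-x=0\}$ and the cone $\{\tilde Q=0\}$. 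In particular, one must rule out the other candidate real arc between $t_1$ and $t_2$, which would produce an incorrect branch of the asymptotic differing from the true answer by residues of $\omega$ at the coordinate points, corresponding to the trivial regimes where the relevant edge probability is exactly $0$ or $1$.
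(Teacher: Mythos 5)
This theorem is not proved in the paper: it is stated with the label ``[\cite{BP11}]'' and imported wholesale from Baryshnikov--Pemantle. So there is no ``paper's own proof'' to compare against; the right question is whether your sketch faithfully reconstructs the BP11 mechanism and whether the gaps you leave are the genuinely hard ones.

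Your skeleton is essentially right, and matches how [BP11] organizes the argument: Cauchy's formula on a polytorus, localization at the strictly minimal cone point $(1,1,1)$, Poincar\'e residue to collapse the triple integral to a $1$-form on the projectivized tangent cone $Z(\tilde Q)$, and a homotopy-theoretic description of the resulting chain. Your derivation of $\omega$ is correct and can be checked directly: in the tangent-cone chart $X=x/z,\ Y=y/z$, the sheet $\{1-x=0\}$ restricts to $\{X=0\}$, giving the $1/X$ factor, while the leading residue along $\tilde Q=0$ gives $1/\partial_Y\tilde Q$; with $\tilde P_p=\tfrac23$ and $\tilde Q=\tfrac23(XY+X+Y)$ this reproduces the paper's $\omega=dX/(X(X+1))$. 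You also correctly translate $P(\bm{\hat r})\in\mathrm{int}(C^\vee)$ into the statement that the two critical points dual to the tangents from $P(\bm{\hat r})$ are a complex-conjugate pair $t_1,\bar t_1$.

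Two things deserve flagging. First, a small imprecision: because of the $1/(1-x)$ factor, the tangent cone at $(1,1,1)$ is the degree-$3$ hyperbolic cone $\{X\,\tilde Q=0\}$, not a pure quadratic cone. BP11's ``quadratic points'' machinery does accommodate extra hyperplanes through the cone point (and it is precisely this extra sheet that produces the pole of $\omega$ along $\{X=0\}$ and hence the $u$-axis in Theorem~\ref{wact}), but calling $(1,1,1)$ ``precisely a cone point of the type treated in [BP11]'' glosses over this; the paper itself signals the distinction when it notes that the nodal $T_{1,2}$ case ``is outside the class of quadratic singularities studied in [BP11]''. Second, and more significantly, you correctly identify the chain determination---pinning $\delta(\bm{\hat r})$ to the specific real arc of $Z(\tilde Q)(\R)$ between $[0:1:0]$ and $[0:0:1]$ through $[1:0:0]$---as ``the main obstacle,'' but you do not close it. This is not a cosmetic gap: it is exactly the content of Lemma~6.15 of [BP11] (which the paper invokes again in the proof of Theorem~\ref{bpthm2}), and without it one cannot distinguish the stated answer from the one that would result from the complementary arc, which differs by residues of $\omega$ at the coordinate points and hence by the frozen-region values $0$ or $1$. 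An honest proof would have to carry out the intersection-cycle computation that BP11 use to track the deformation of the Cauchy torus across both sheets of $\{X\tilde Q=0\}$; as written, your proposal reduces the theorem to that computation rather than proving it.
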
\label{bpthm}

Note that the only dependence on $\bm{\hat{r}}$ is through the chain of integration. Note also that this shows that $C^\vee$ is the strict boundary for exponential decay of two of the asymptotic edge probabilities. This shows that the arctic curve is $C^\vee$.

In our case, plugging in (\ref{pq1}) we obtain the 1-form
$$
\omega=\frac{dX}{X(X+1)},
$$
which has poles at $[0:1:0]$ and $[0:0:1]$ with residues $-1$ and $1$ respectively. We are lead to the following description of the arctic curve: As $P(\bm{\hat{r}})$ approaches the curve $C^\vee$, the two complex tangents from $P(\bm{\hat{r}})$ to $C^\vee$ merge into a real double tangent. Under projective duality, on $Z(\tilde{Q})$, the two points $t_1$ and $t_2$ merge into a point on the real part of $Z(\tilde{Q})$ and therefore $\delta(\bm{\hat{r}})$ becomes a closed loop. Using the residue theorem, the asymptotic edge probabilities in the frozen region $P(\bm{\hat{r}})$ approaches may be read from the residue divisor of $\omega$.  \\
If we take a non-constant conductance function on $T_{1,1}$, it was shown in \cite{PS05} that the arctic curve is an ellipse inscribed in the triangle $\nabla$.

\subsection{$T_{1,2}$ with $N=1$}
Consider the T-3-periodic conductance function on $T_{1,2}$ given in the notation of Figure \ref{fdt12} (a) by:\\
$$
a=\frac{1}{2};b=\frac{1}{8};c=\frac{3}{2};d=\frac{1}{8};e=\frac{1}{2};f=\frac{3}{2}.
$$
The linear system from (\ref{pgen}) is:
\begin{equation}\label{matrix}
\begin{pmatrix} -\frac{3 x}{16}-\frac{\text{xy}}{16}-\frac{3 y}{4}+1 & x y z-\frac{3 x z}{4}-\frac{3 y z}{16}-\frac{z}{16} \\
x y z-\frac{3 x z}{16}-\frac{3 y z}{4}-\frac{z}{16} & -\frac{x y}{16}-\frac{3 x}{4}-\frac{3 y}{16}+1 \end{pmatrix}\begin{pmatrix}G^{(0,0,0)}(x,y,z) \\ G^{(0,0,-1)}(x,y,z) \end{pmatrix}=\frac{x}{1-x}\begin{pmatrix} \frac{13}{16} \\ \frac{1}{4} \end{pmatrix}.
\end{equation}
We compute
\begin{align*}
\tilde{P}_p(x,y,z)&=\frac{185 x}{256}+\frac{13 y}{32},\\
\tilde{Q}(x,y,z)&=\frac{1}{256}(255 x^2 y + 255 x y^2 + 104 x^2 z + 370 x y z + 104 y^2 z).
\end{align*}

\begin{figure}
\centering
\subcaptionbox{$C(x\tilde{Q})\cap \{(x,y,z)\in \mathbb{R}^3:x+y+z=-1\}$ illustrating the geometry near the singular point $(1,1,1)$.}
{\includegraphics[width=0.4\textwidth]{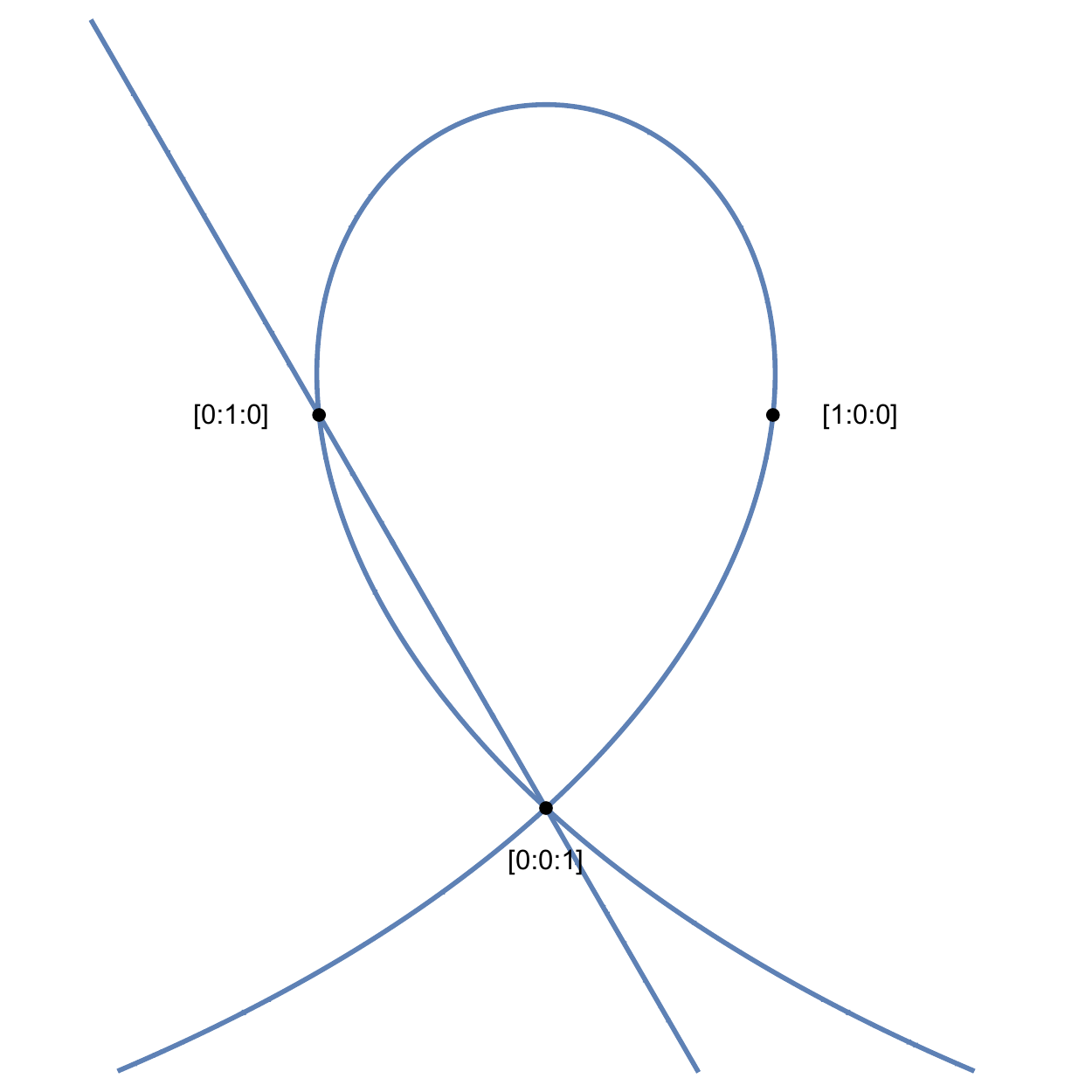}}
\subcaptionbox{The residue divisor of $\omega$ on $X \cong \mathbb{P}_\C^1$. The blue curve is the real part of $X$ and is isomorphic to $\mathbb{P}_\R^1$}
{\includegraphics[width=0.42\textwidth]{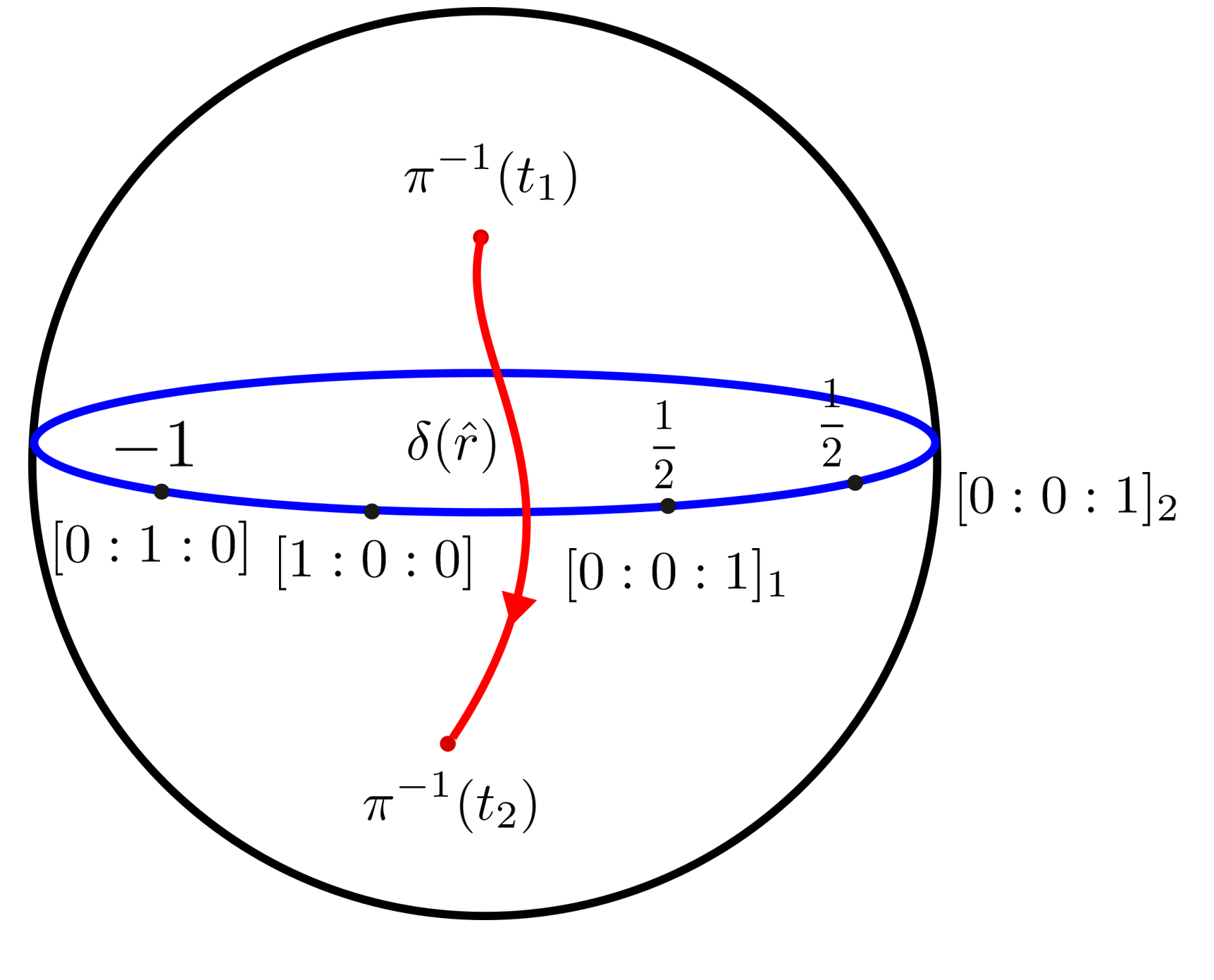}}
\caption{$\tilde{Q}(x,y,z)$ and its normalization $X$.}\label{t12qx}
\end{figure}

The dual curve is
\begin{align*}\tilde{Q}^{\vee}(u,v,w) = 6619392 u^4 - 47099520 u^3 v + 97021584 u^2 v^2 - 47099520 u v^3 + \\
 6619392 v^4 - 38301120 u^3 w - 3164400 u^2 v w - 3164400 u v^2 w- \\
 38301120 v^3 w + 73033700 u^2 w^2 + 6779600 u v w^2 + 
 73033700 v^2 w^2 \\ - 57655500 u w^3 - 57655500 v w^3 + 27635625 w^4. \nonumber
\end{align*}
\begin{figure}
\centering
\subcaptionbox{Simulation of a random grove on $\mathcal{I}(100)$.}
{\includegraphics[width=0.4\textwidth]{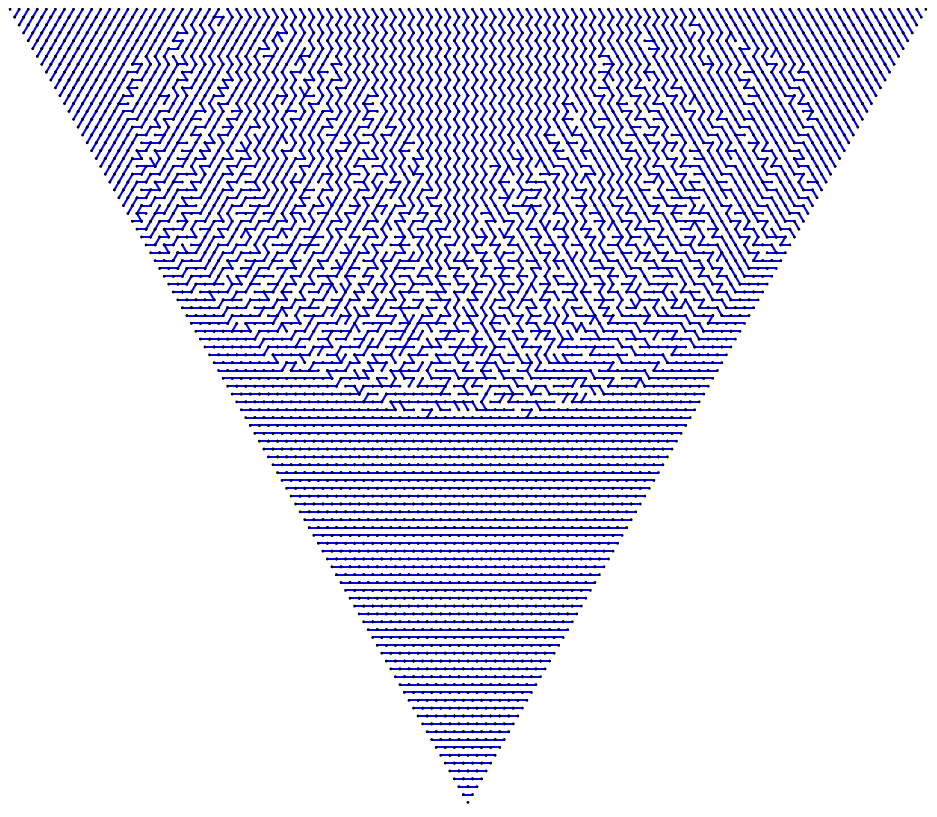}}
\subcaptionbox{The arctic curve $C^\vee$.}
{\includegraphics[width=0.42\textwidth]{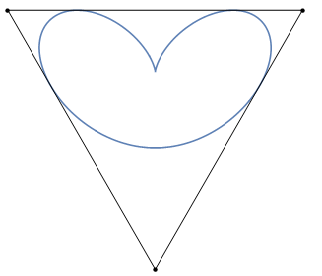}}
\caption{$T_{1,2}$ with $N=1$.}\label{t12n1}
\end{figure}

$Z(\tilde{Q})$ is singular with a node at $[0:0:1]$ (See Figure \ref{t12qx} (a)). 
This is outside the class of quadratic singularities studied in \cite{BP11}, but as observed in Section 7 of that paper, the techniques used still go through with minor modifications. Theorem \ref{wact} still holds, so we still have exponential decay outside the dual curve(see \cite{BP11} Proposition 2.23).

We need the following notions from \cite{KO07}: A degree $d$ real algebraic curve $C \subset \mathbb{P}^2_{\mathbb{R}}$ is \textit{winding} if:
\begin{itemize}
\item it intersects every line $L\subset \mathbb{P}^2_{\mathbb{R}}$ 
in at least $d-2$ points counting
multiplicity, and
\item there exists a point $p_0\in \mathbb{P}^2_{\mathbb{R}} \setminus C$  called the center, such that every line through $p_0$ intersects $C$ in $d$ points.
\end{itemize}
The dual of a winding curve $C$ is called a \textit{cloud curve}.  $C^\vee$ separates $\mathbb{P}^2_{\mathbb{R}}$ into two regions, formed by the lines that intersect $C$ in $d$ and $d-2$ points, which we call the exterior and interior respectively. A cloud curve $C^\vee$ has a unique pair of complex conjugate tangents through any point in its interior which under projective duality gives a pair of complex conjugate points on $C$. \\

Let $\pi:X \ra Z(\tilde{Q})$ be the normalization of $Z(\tilde{Q})$, where we denote by $[0:0:1]_1$ and $[0:0:1]_2$ the two points in $X$ in the fiber above the node $[0:0:1]$ of $Z(\tilde{Q})$, such that in cyclic order, we have $[0:1:0],[1:0:0],[0:0:1]_1,[0:0:1]_2$ on the real part of $X$.

\begin{theorem}\label{bpthm2}
The conclusions of Theorem \ref{bpthm} hold with the following modifications:\\
\begin{itemize}
\item The 1-form (\ref{omega}) is replaced by its pullback to $X$.
\item The chain $\delta(\bm{\hat{r}})$ is also pulled back to $X$ so that it is now a simple path from $\pi^{-1}(t_1)$ to $\pi^{-1}(t_2)$ passing through the arc between $[0:1:0]$ and $[0:0:1]_1$. 
\end{itemize}

\end{theorem}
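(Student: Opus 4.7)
The plan is to follow Baryshnikov--Pemantle's saddle-point method for rational multivariate asymptotics, adapted to the present non-smooth tangent cone by passing to the normalization $\pi\colon X \to Z(\tilde{Q})$. As indicated in Section 7 of \cite{BP11}, the reduction of the torus Cauchy integral for $p(-i,-j,-k)$ to a one-dimensional contour integral on the projective tangent cone at $(1,1,1)$ goes through for non-quadratic singularities of the denominator, provided one can describe the contour of integration on a smooth curve. The normalization $X$, which is $\mathbb{P}^1_\C$ since $Z(\tilde{Q})$ is an irreducible nodal cubic of geometric genus zero, supplies such a smooth model.

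The first step is to redo the \cite{BP11} derivation up through the point at which the asymptotic is represented as $\frac{1}{2\pi i}\int \omega$ over a chain on $Z(\tilde{Q})$, with endpoints $t_1,t_2$ arising via projective duality from the pair of real tangents through $P(\bm{\hat{r}})$ to the cloud curve $C^\vee$. Since the projective duality and the identification of the endpoints use only smoothness at the tangency points themselves, which stay bounded away from the node for $P(\bm{\hat{r}})$ in the interior of $C^\vee$, this step is essentially unchanged from Theorem \ref{bpthm}. The exponential decay outside $C^\vee$ provided by Theorem \ref{wact} also remains in force exactly as in Proposition 2.23 of \cite{BP11}, since the dual-cone criterion does not see the node.

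The second step is to lift $\omega$ and the chain $\delta(\bm{\hat{r}})$ to $X$ via $\pi$. Off the node, $\pi$ is a local biholomorphism, so $\pi^*\omega$ is meromorphic on $X$ with the same local structure as $\omega$ on $Z(\tilde{Q})$, acquiring one additional pole at each preimage of $[0:0:1]$. The chain lifts uniquely once one specifies which side of the node it threads, and the two analytic branches of $Z(\tilde{Q})$ at $[0:0:1]$ can be distinguished either locally from the tangent directions of $\tilde{Q}$ at the node or from the nodal section of $C(\tilde{Q})$ shown in Figure \ref{t12qx}(a). Labeling the branch cyclically adjacent to $[1:0:0]$ on the real locus as $[0:0:1]_1$ then forces the order $[0:1:0],[1:0:0],[0:0:1]_1,[0:0:1]_2$ and forces the lifted chain to pass through the real arc between $[0:1:0]$ and $[0:0:1]_1$.

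The main obstacle is verifying that the saddle-point estimate, originally derived in \cite{BP11} assuming smoothness of the tangent cone at the relevant points, survives on the normalization. Three inputs must be checked: the saddles $\pi^{-1}(t_1), \pi^{-1}(t_2)$ are regular points of $X$ by construction of the normalization; the contour deformation from the original polytorus to the lifted chain can be taken to avoid $\pi^{-1}([0:0:1])$, since $P(\bm{\hat{r}})$ is strictly interior to $C^\vee$ while the node corresponds under duality to a vertex of $\nabla$ lying on $\partial C^\vee$; and $\pi^*\omega$ is holomorphic along the deformed contour, its only poles on $X$ coming from the preimages of the divisor of $\omega$, which the contour likewise avoids. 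Granted these, the standard Morse-type expansion at $\pi^{-1}(t_1)$ and $\pi^{-1}(t_2)$ proceeds as in the smooth case and delivers the same error $O\left(1/\sqrt{i^2+j^2+k^2}\right)$.
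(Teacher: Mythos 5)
Your plan follows the same route the paper does: pass to the normalization $X \cong \mathbb{P}^1_\mathbb{C}$ of the nodal cubic $Z(\tilde{Q})$, pull back $\omega$ and the chain $\delta(\bm{\hat{r}})$, and invoke the machinery behind Lemma 6.15 of \cite{BP11}. The technical remarks in your last paragraph---that the saddles are regular points of $X$, and that the deformed contour can be taken to avoid the preimages of the node and of the polar divisor of $\omega$---are sound. However, you omit the step the paper's proof actually rests on: the verification that $Z(\tilde{Q})$ is a \emph{winding} curve, equivalently that $C^\vee$ is a \emph{cloud} curve. You invoke the cloud-curve language without checking it, and the justification you offer---that ``the projective duality and the identification of the endpoints use only smoothness at the tangency points themselves''---is not correct. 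Smoothness of $Z(\tilde{Q})$ at the tangency points is a local condition and says nothing about how many tangents $C^\vee$ admits through a given point. What Lemma 6.15 needs is that each interior point of $C^\vee$ has a \emph{unique} pair of complex conjugate tangents, so that $t_1, t_2$ are well defined, and this is a global property supplied only by the winding/cloud structure. The paper checks it directly: $C^\vee$ is a cardioid (Figure \ref{t12n1}(b)); from an interior point there is exactly one real tangent plus one complex-conjugate pair, and from an exterior point there are three real tangents, consistent with the class of $C^\vee$ being $\deg Z(\tilde{Q})=3$. That verification, together with the observation that it reproduces exactly the hypothesis of Lemma 6.15, is the content of the proof. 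A smaller slip: for $P(\bm{\hat{r}})$ in the interior of $C^\vee$ the two tangents yielding $t_1, t_2$ are complex conjugate, not real as you write in your first step.
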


\begin{proof}
The curve $Z(\tilde{Q})$ is a winding curve, where we may take the center to be $[1:1:1]$(This is also easily seen from the dual picture: $C^\vee$ is a cardioid (Figure \ref{t12n1} (b)) and there is a unique real tangent to $C^\vee$ from a point in its interior, whereas there are three real tangents from its exterior). Therefore, for any point $P(\bm{\hat{r}})$ in the interior of $C^\vee$ we have a pair of complex conjugate points $t_1,t_2$ on $Z(\tilde{Q})$. This is exactly the hypothesis needed in the proof of Lemma 6.15 in \cite{BP11} to determine the boundary of $\delta(\bm{\hat{r}})$.
\end{proof}

Since $$\eta=\frac{\tilde{P_p}(X,Y,1)dX}{ \frac{\partial \tilde{Q}(X,Y,1)}{\partial Y}},$$ is a holomorphic 1-form, and $\omega=\frac{1}{X}\eta$, the poles of $\omega$ are supported on the intersection of $Z(\tilde{Q})$ with the line $Z(x)$, which is a finite number of points. By computing Puiseux expansions at these points, we see that $\pi^*\omega$ has the residue divisor shown in Figure \ref{t12qx} (b). We can explain the new frozen region as follows: As $P(\bm{\hat{r}})$ approaches that region, $\pi^{-1}(t_1)$ and $\pi^{-1}(t_2)$ merge into a point on the real part of $X$ on the arc between $[0:0:1]_1$ and $[0:0:1]_2$, thereby enclosing a pole with residue $\frac{1}{2}$. 
\subsection{$T_{1,2}$ with $N=2$.}
By a simple computation, we can see that there are no $T-2-$periodic solutions that are not $T-$invariant for $T_{1,2}$, and therefore this case is subsumed by the previous one.

\subsection{$T_{1,2}$ with $N=3$}

Consider the T-3-periodic conductance function on $T_{1,2}$ given in the notation of Figure \ref{fdt12} (a) by:\\
$$
a=\frac{1}{2};b=\frac{1}{3};c=1;d=\frac{10}{3};e=\frac{1}{4};f=\frac{2}{43}.
$$

In the linear system from (\ref{pgen}), we have:
\begin{equation*}
A=\begin{pmatrix} 1& x y z& -\frac{x}{2} - \frac{y}{3}& -\frac{z}{6}& -\frac{xy}{6}& -\frac{x z}{3} - \frac{y z}{2}\\ x y z& 
  1& -\frac{5z}{6}& -\frac{20x}{129} - \frac{y}{86}& -\frac{xz}{86} - \frac{20yz}{129}& -\frac{5xy}{6}\\-\frac{43xy}{53}& -\frac{4xz}{53} - \frac{6yz}{53}& 1&  x y z& -\frac{6x}{53} - \frac{4y}{53}& -\frac{43z}{53}\\-\frac{3xz}{53} - \frac{40yz}{53}& -\frac{10xy}{53}& x y z&   1& -\frac{10z}{53}& -\frac{40x}{53}- \frac{3y}{53}\\-\frac{6x}{53} - \frac{4y}{53}& -\frac{43z}{53}& -\frac{43xy}{53}& -\frac{4xz}{53} - \frac{6yz}{53}& 1& 
  x y z\\-\frac{10z}{53}& -\frac{40x}{53} - \frac{3y}{53}& -\frac{3xz}{53} - \frac{40yz}{53}& -\frac{10xy}{53}& x y z& 1\end{pmatrix},
\end{equation*}

\begin{equation*}\left(G^{[\mu]}_p\right)_{{[\mu]} \in \mathcal{M}}=
\begin{pmatrix}G^{(0,0,0)}\\G^{(-2,0,-1)}\\G^{(-1,0,0)}\\G^{(0,0,-1)}\\G^{(-2,0,0)}\\G^{(-1,0,-1)} \end{pmatrix} \text{ and }(Q^{[\mu]}(0,0,0)+R^{[\mu]}(0,0,0))_{{[\mu]} \in \mathcal{M}}=\begin{pmatrix} \frac{1}{2}\\\frac{109}{129}\\\frac{47}{53}\\\frac{13}{53}\\\frac{47}{53}\\\frac{13}{53}\\\end{pmatrix}.
\end{equation*}

\begin{figure}
\centering
\subcaptionbox{A plot of $C(\tilde{Q})\cap \{(x,y,z)\in \mathbb{R}^3:x+y+z=-1\}$. }
{\includegraphics[width=0.6\textwidth]{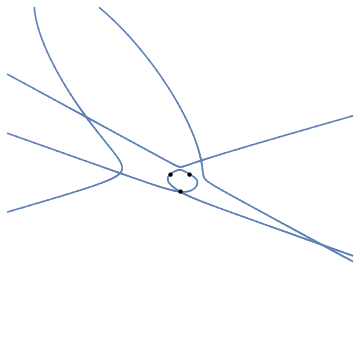}}
\subcaptionbox{The residue divisor of $\omega$ on $X$ and the chain of integration $\delta(\bm{\hat{r}})$ when $P(\bm{\hat{r}})$ is on the irreducible component bounding the gaseous region. The blue curves are the two irreducible components of $\tilde{Q}(x,y,z)$ viewed as a real algebraic curve in $\mathbb{P}^2_\R$.}
{\includegraphics[width=0.6\textwidth]{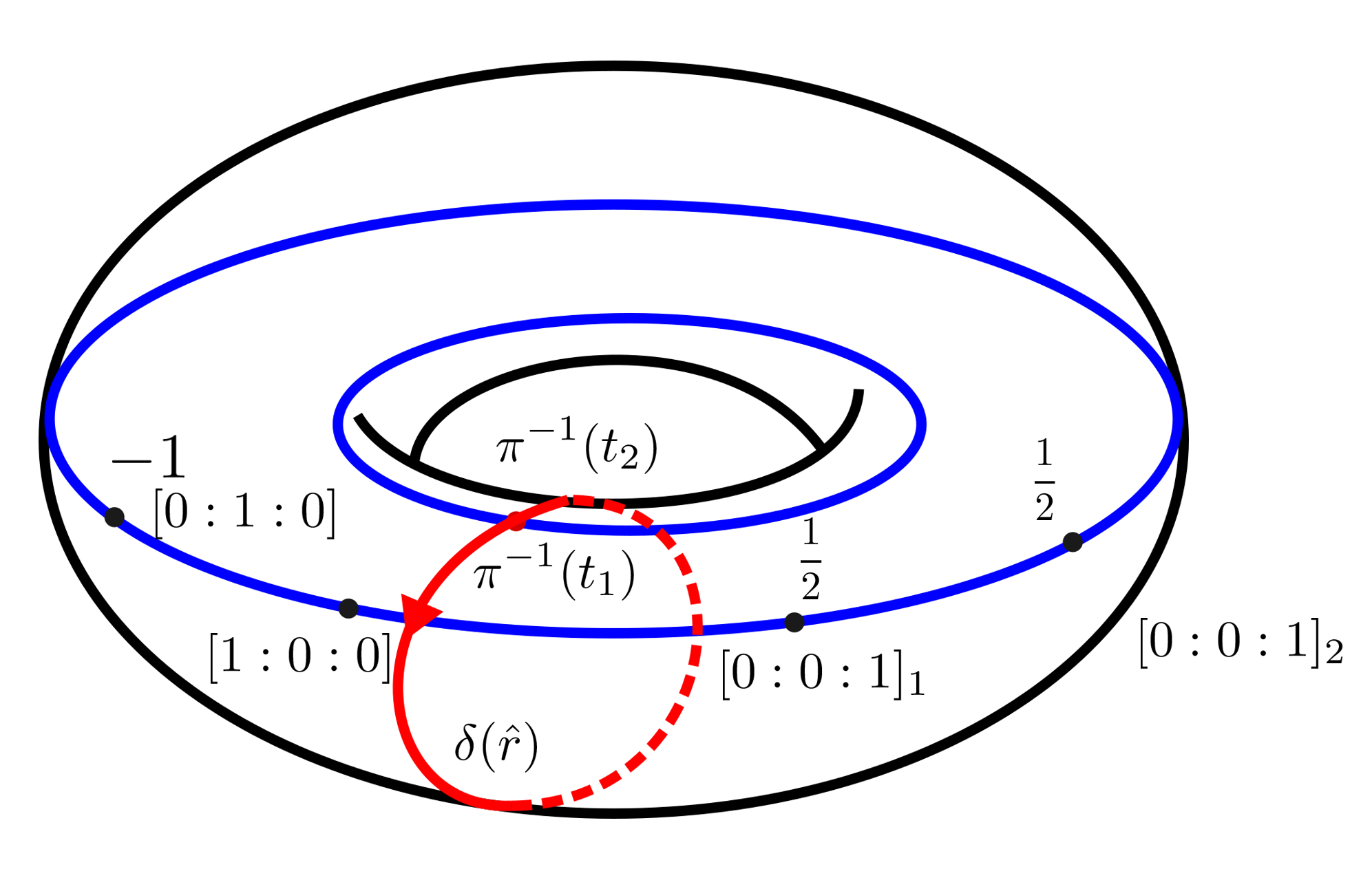}}
\caption{$\tilde{Q}(x,y,z)$ and its normalization $X$.}\label{t123qx}
\end{figure}

We obtain
\begin{align*}
\tilde{P}_p(x,y,z)&=(-8376157535 x^3-27465850948 x^2 y-37792606090 x^2 z-32422312230 x y^2\\& -81250160702 x y z-41078137290 x z^2-12081677400 y^3-37378399260 y^2 z\\& -26396541912 y
   z^2)/2035744098;
\end{align*}
\begin{align*}
\tilde{Q}(x,y,z)&=(-2195435870 x^4 y-4213162175 x^4 z-8636813573 x^3 y^2-26901515220 x^3 y z\\& -18270472400 x^3 z^2-8949558855 x^2 y^3-44782155243 x^2 y^2 z-62350371390 x^2 y
   z^2\\& -19642088100 x^2 z^3-2785734900 x y^4-25376048920 x y^3 z-53016222846 x y^2 z^2\\& -27385424860 x y z^3-4027225800 y^4 z-12459466420 y^3 z^2\\&-8798847304 y^2
   z^3)/678581366.
\end{align*}

As a real algebraic curve, we observe that $\tilde{Q}(x,y,z)$ is winding with center $(1,1,1)$ and has two irreducible components (See Figure \ref{t123qx}). Let us denote by $V_1$ the component that contains the axes and by $V_2$ the other one. Under duality, we obtain two dual real components $V_1^\vee$ and $V_2^\vee$, where $V_2^\vee$ is in the interior of $V_1^\vee$(see Figure \ref{et} (b)). The region bounded by $V_2^\vee$ is a gaseous phase. The local statistics in this region are expected to be described by the ergodic Gibbs measure of slope $(1,0)$.\\

Let $K$ be the cone over the region in the interior of $V_1^\vee$. Then it follows from \cite{BP11} (see also \cite{PW13} Theorem 11.3.8) that $p(-i,-j,-k)$ decays exponentially quickly outside $\text{convex-hull}(K \cup \{(u,v,w)\in \mathbb{R}^3:v=w=0\})$. \\

$Z(\tilde{Q})$ has genus $1$ and therefore its normalization is topologically a torus. The 1-form $\pi^* \omega$ in Theorem \ref{bpthm2} has the residue divisor shown in Figure \ref{t123qx}(b). We observe that as $P(\bm{\hat{r}})$ approaches $V_2^\vee \cap \{(u,v,w) \in \R^3: u+v+w=-1\}$, the points $\pi^{-1}(t_1)$ and $\pi^{-1}(t_2)$ merge to a point on the inverse image of $V_1$ in $X$ and therefore $\delta(\bm{\hat{r}})$ becomes a loop with non-trivial homology on the torus(see Figure \ref{t123qx}(b)).

\section{Further questions}
We are able to compute several interesting examples of arctic curves but there are several questions that remain.
\begin{itemize}
\item The projective duals of curves arising as limit shapes in the groves and in the dimer model are expected to be winding. In dimers, in cases where $\tilde{Q}(x,y,z)$ is rational, this is proved in \cite{KO07}. In \cite{K17}, groves were shown to satisfy a variational principle that is algebraically identical to the one in \cite{CKP01} for dimers, and therefore the same holds. Can we prove that the polynomials $\tilde{Q}(x,y,z)$ are winding for all genus from the generating function?

\item We have seen that Lemma 6.15 from \cite{BP11} can be extended to tackle our examples and that the necessary assumption was that $\tilde{Q}(x,y,z)$ is winding. This motivates the following problem: Extend the machinery of \cite{BP11} to describe the asymptotics of generating functions with higher degree isolated singularities where the local geometry is described by a winding curve.

\item Periods of the 1-form $\omega$ encode asymptotic probabilities of the different solid and gaseous phases. Since we know what these measures are, these asymptotic probabilities are easy to compute from and depend only on the Newton polgyon. Can we prove a description of the residue divisor of $\omega$ for general $T_{m,n}$ and $N$ in terms of the Newton polygon?

\item Can we generalize the results of this paper to groves on other $\Z^2$-periodic lattices?
\item What can we say about the subvariety of $T-N-$periodic points of $\mathcal{R}_N$?
\end{itemize}

\end{document}